\numberwithin{equation}{section}
\newtheorem{theorem}{Theorem}[section]
\newtheorem{lemma}[theorem]{Lemma}
\newtheorem{rem}{Remark}[section]
\newtheorem{proposition}[theorem]{Proposition}
\let\e=\varepsilon
\begin{document}



\title[Global Hilbert expansion for the ionic Vlasov-Poisson-Boltzmann system]
{Global Hilbert expansion for the \\ionic Vlasov-Poisson-Boltzmann system}

\author[F.-C. Li]{fucai Li}
\address{School of Mathematics, Nanjing University, Nanjing
 210093, P. R. China}
\email{fli@nju.edu.cn}

\author[Y.-C. Wang]{Yichun Wang}
\address{School of Mathematics,
Southeast University, Nanjing, 211189, P. R. China}
 \email{yichunwang@seu.edu.cn}

\date{}

\begin{abstract}
We justify the global-in-time validity of Hilbert expansion for the ionic Vlasov-Poisson-Boltzmann system in $\mathbb{R}^3$, a fundamental model describing ion dynamics in dilute collisional plasmas. As the Knudsen number approaches zero, we rigorously derive the compressible Euler-Poisson system governing global smooth irrotational ion flows.
The truncated Hilbert expansion exhibits a multi-layered mathematical structure: the expansion coefficients satisfy linear hyperbolic systems, while the remainder equation couples with a nonlinear Poisson equation for the electrostatic potential.  This requires refined elliptic estimates addressing the exponential nonlinearities and some new enclosed $L^2\cap W^{1,\infty}$ estimates for the potential-dependent terms.
\end{abstract}

\keywords{Ionic Vlasov-Poisson-Boltzmann system, nonlinear Poisson equation, global Hilbert expansion, compressible ionic Euler-Poisson system}

\subjclass[2020]{82D10, 35Q83, 76P05, 76N15}

 \maketitle


\section{Introduction}
\subsection{The model and our results}
The kinetic evolution of ion particles in dilute collisional plasmas is governed by the ionic Vlasov-Poisson-Boltzmann system:
\begin{equation}\label{vpb}
\left\{
\begin{aligned}
&\partial_t F+v\cdot \nabla_x F-\nabla_x \phi \cdot \nabla_v F=\frac{1}{\e}Q(F,F), \\
&\Delta \phi = e^\phi-\rho, \quad \rho=\int_{\mathbb{R}^3}F {\rm d}v,
\end{aligned} \right.
\end{equation}
where $F=F(t,x,v)$ is the ion distribution function with the time $t\geq 0$, position $x=(x^1,x^2,x^3)\in \mathbb{R}^3$, and velocity  $v=(v^1,v^2,v^3)\in  \mathbb{R}^3$.
The self-consistent electrostatic potential $\phi(t,x)$, which couples to $F(t,x,v)$ through the nonlinear Poisson equation in \eqref{vpb}$_2$, incorporates the electron density $e^\phi$ via the well-known Maxwell-Boltzmann law.
The parameter $\e>0$, called Knudsen number, is proportional to the mean free path.
The Boltzmann collision operator $Q(\cdot,\cdot)$ is defined as
\begin{align*}
Q(F_1, F_2)=\;& \iint_{\mathbb{R}^3\times \mathbb{S}^2} |(u-v)\cdot \omega|[F_1(u')F_2(v')-F_1(u)F_2(v)]\mathrm{d}\omega \mathrm{d}u\\
=\;& Q_{\mathrm{gain}}(F_1, F_2)-Q_{\mathrm{loss}}(F_1, F_2),
\end{align*}
where the pre-collision velocities $(u, v)$ and post-collision velocities $(u', v')$ satisfy the momentum and energy conservations: 
$$u+v=u'+v',\quad |u|^2+|v|^2 = |u'|^2+|v'|^2,$$ which implies that
$$u'=u-[(u-v)\cdot \omega]\omega,\quad v'=v+[(u-v)\cdot \omega]\omega,\quad \omega \in \mathbb{S}^2.$$

Formally, Bardos et al. \cite{Bardos2018} derived a system similar to \eqref{vpb} with a time-dependent electron temperature from 
the bipolar Vlasov-Poisson-Boltzmann system under the massless electrons limit (vanishing electron-to-ion mass ratio). This article focuses on the system \eqref{vpb} with normalized electron temperature.
Li-Yang-Zhong \cite{Zhong2016} constructed the existence and uniqueness of the strong solution to \eqref{vpb} in $\mathbb{R}^3$. In \cite{Fucai-iVPB-Torus}, we showed the global existence and exponential decay of classical solutions to \eqref{vpb} in a periodic box.

On the fluid scale, ion dynamics are governed by the compressible ionic Euler-Poisson system:
\begin{equation}\label{EPS}
\left\{ \begin{aligned}
&\partial_t \rho_0+\nabla\cdot (\rho_0 u_0) =0, \\
&\partial_t u_0+(u_0\cdot \nabla)u_0+\frac{1}{\rho_0}\nabla p_0+\nabla \phi_0=0,\\
&\Delta \phi_0= e^{\phi_0}-\rho_0,
\end{aligned}
\right.
\end{equation}
where $\rho_0$ and $u_0$ are the density and velocity of the fluid at the time $t\geq 0$, position $x=(x_1,x_2,x_3)\in \mathbb{R}^3$.  Guo-Pausader \cite{Guo-ion-2011} proved the global existence of smooth solutions $[\rho_0, u_0]$ to \eqref{EPS} near a uniform state $[1, 0]$ with small, irrotational perturbations in $\mathbb{R}^3$. 
Thus, a natural problem arises: can we show the convergence from \eqref{vpb} to \eqref{EPS} as the Knudsen number $\e\rightarrow 0$, and thus contract their connections? 
Our purpose of this paper is to give an affirmative answer to this problem. We shall prove that the strong solutions of the ionic Vlasov-Poisson-Boltzmann system \eqref{vpb} converge to the classical solutions of the ionic Euler-Poisson system \eqref{EPS} globally in time, as $\e\rightarrow 0$. It provides a first rigorous justification for the global validity of the Hilbert expansion in the ionic Vlasov-Poisson-Boltzmann system \eqref{vpb} within the entire space $\mathbb{R}^3$.

We take the truncated Hilbert expansions
\begin{align}
F=&F_0+\e F_1+\cdots +\e^{2k-1}F_{2k-1}+\e^k R,\label{expansion1}\\
\phi=&\phi_0+\e \phi_1+\cdots +\e^{2k-1}\phi_{2k-1}+\e^k \phi_R,\label{expansion2}
\end{align}
where $[F_0,\phi_0]$ is the leading order, $[F_i,\phi_i]$ $(1\leq i\leq 2k-1)$ are the coefficients and $[R,\phi_R]$ is the remainder. 

Plugging  \eqref{expansion1} and \eqref{expansion2} into \eqref{vpb} yields
\begin{equation}\label{plugging}
\left\{ \begin{aligned}
&\partial_t \bigg(\sum^{2k-1}_{n=0}\e^n F_n +\e^k R\bigg)
+v\cdot \nabla_x \bigg(\sum^{2k-1}_{n=0}\e^n F_n +\e^k R\bigg)\\
&\quad -\nabla_x \bigg(\sum^{2k-1}_{n=0}\e^n \phi_n +\e^k \phi_R\bigg)\cdot \nabla_v \bigg(\sum^{2k-1}_{n=0}\e^n F_n +\e^k R\bigg)\\
&\qquad =\frac{1}{\e}Q\bigg(\sum^{2k-1}_{n=0}\e^n F_n +\e^k R, \sum^{2k-1}_{n=0}\e^n F_n +\e^k R\bigg),\\
&\Delta \bigg(\sum^{2k-1}_{n=0}\e^n \phi_n +\e^k \phi_R\bigg)=\exp\bigg(\sum^{2k-1}_{n=0}\e^n \phi_n +\e^k \phi_R\bigg)-\int_{\mathbb{R}^3} \bigg(\sum^{2k-1}_{n=0}\e^n \phi_n +\e^k \phi_R\bigg) \mathrm{d}v.
\end{aligned}
\right.
\end{equation}
We compare the coefficients of different powers of the parameter $\e$ in \eqref{plugging} to get
\begin{equation}\label{Fi}
\begin{split}
\frac{1}{\e}:\,\,& Q(F_0, F_0)=0,\\
\e^0:\,\,& \partial_t F_0+v \cdot \nabla_x F_0- \nabla_x \phi_0 \cdot \nabla_v F_0= Q(F_1,F_0)+Q(F_0,F_1),\\
& \Delta \phi_0=e^{\phi_0}-\int_{\mathbb{R}^3}F_0 \mathrm{d}v,\\
\e^1:\,\,& \partial_t F_1+v \cdot \nabla_x F_1- \nabla_x \phi_0 \cdot \nabla_v F_1= Q(F_2,F_0)+Q(F_0,F_2)+Q(F_1,F_1),\\
& \Delta \phi_1=e^{\phi_0}\phi_1-\int_{\mathbb{R}^3}F_1 \mathrm{d}v,\\
& \cdots \\
\e^n:\,\,& \partial_t F_n+v \cdot \nabla_x F_n- \nabla_x \phi_0 \cdot \nabla_v F_n-\nabla_x \phi_n \cdot \nabla_v F_0\\
&\,\, \quad \,\,\,\,=\sum_{ { i, j\geq 0,i+j=n+1 }}Q(F_i,F_j)+\sum_{ { i, j\geq 1 , i+j=n }}\nabla_x \phi_i \cdot \nabla_v F_j,\\
& \Delta \phi_n=e^{\phi_0}A_n-\int_{\mathbb{R}^3}F_n \mathrm{d}v.
\end{split}
\end{equation}
Here, 
\begin{align*}
A_n=e^{-\phi_0}\frac{1}{n!}\frac{\mathrm{d}^n}{\mathrm{d}\e^n}\Big|_{\e=0}
\left(\exp\big\{\phi_0+\e \phi_1+\cdots +\e^{2k-1}\phi_{2k-1}\big\}\right),\quad n=0,\ldots,2k-1.
\end{align*}
\begin{rem}\label{taylor}
Formally, note that the following difference is of order $\e^{2k}$$:$
\begin{align*}
&\exp\big\{\phi_0+\e \phi_1+\cdots +\e^{2k-1}\phi_{2k-1}+\e^{2k}\phi_{2k}+\cdots\big\}
-\exp\big\{\phi_0+\e \phi_1+\cdots +\e^{2k-1}\phi_{2k-1}\big\}\\
=&\exp\big\{\phi_0+\e \phi_1+\cdots +\e^{2k-1}\phi_{2k-1}\big\} \left(\exp\big\{\e^{2k}\phi_{2k}+\cdots\big\}-1\right)\\
\sim &\,\e^{2k} \phi_{2k} \exp\big\{\phi_0+\e \phi_1+\cdots +\e^{2k-1}\phi_{2k-1}\big\}.
\end{align*}
Hence, taking the $(2k-1)$th-order Taylor polynomial $T_{2k-1}$ for the parameter $\e>0$ yields 
\begin{align*}
&\,T_{2k-1}\big(\exp\big\{\phi_0+\e \phi_1+\cdots +\e^{2k-1}\phi_{2k-1}+\e^{2k}\phi_{2k}+\cdots\big\}\big)\\
=&\,T_{2k-1}\big(\exp\big\{\phi_0+\e \phi_1+\cdots +\e^{2k-1}\phi_{2k-1}\big\}\big)\\
:=&\,T_{2k-1}(H(\e))\\
=&\,H(0)+\e H'(0)+\cdots + \e^{2k-1}\frac{H^{2k-1}(0)}{(2k-1)!}\\
:=&\,e^{\phi_0}(A_0+\e A_1+\cdots +\e^{2k-1}A_{2k-1}),
\end{align*}
where we have denoted $\exp\big\{\phi_0+\e \phi_1+\cdots +\e^{2k-1}\phi_{2k-1}\big\}$ by $H(\e)$. By direct calculations, we get
\begin{align*}
A_0=&\,1,\,\,\,A_1=\phi_1,\,\,\, A_2=\phi_2+\frac{\phi_1^2}{2},\\
A_3=&\,\phi_3+\phi_1\phi_2+\frac{\phi_1^3}{6},\\
 &  \!\!\! \cdots  \\ 
A_n=&\,\phi_n +\phi_{n-1}\phi_1+\phi_{n-2}\frac{\phi_1^2}{2!}+\cdots +\phi_2 \frac{\phi_1^{n-2}}{(n-2)!}
+\frac{\phi_1^n}{n!}+\phi_{n-2}\phi_2+\cdots\\
=&\, \phi_n +\sum_\varkappa C_\varkappa \prod_{1\leq j\leq n-1}(\phi_j)^{\varkappa_j},
 \quad \sum_{ {1\leq j\leq n-1,  \varkappa_j\in \mathbb{N}_+}} j \varkappa_j=n.
\end{align*}
Here, $\varkappa=[\varkappa_1, \ldots, \varkappa_{n-1}]$ and $C_\varkappa>0$.
\end{rem}

From the $\frac{1}{\e}$ step and $\e^0$ step in \eqref{Fi}, the leading order $F_0$ is given by a local Maxwellian $\mu$:
\begin{align}\label{mu}
F_0(t,x,v)=\mu(t,x,v)=\frac{\rho_0(t,x)}{(2\pi \theta_0(t,x))^{\frac{3}{2}}}\exp\Big\{-\frac{|v-u_0(t,x)|^2}{2\theta_0(t,x)}\Big\}, \quad \theta_0(t,x)=K \rho_0^{\frac{2}{3}}(t,x),
\end{align}
for some constant $K>0$, 
where 
\begin{align*}
\int_{\mathbb{R}^3}F_0 \mathrm{d}v=\rho_0,\quad \int_{\mathbb{R}^3}vF_0 \mathrm{d}v=\rho_0u_0,\quad \int_{\mathbb{R}^3}|v|^2F_0 \mathrm{d}v=\rho_0|u_0|^2+3\rho_0 \theta_0.
\end{align*}
Note that $[\rho_0,u_0]$ satisfies the compressible ionic Euler-Poisson system \eqref{EPS} with $p_0=\rho_0\theta_0=K\rho_0^{\frac{5}{3}}$.

We then determine the coefficients $[F_i,\phi_i]$ $(1\leq i\leq 2k-1)$.
Define the linearized Boltzmann operator $L$ and $\Gamma$ around $F_0=\mu$ as
\begin{align}
Lg=&\,-\frac{1}{\sqrt{\mu}}\{Q(\mu, \sqrt{\mu}g)+ Q(\sqrt{\mu}g, \mu)\}= \nu(\mu)g-K_\mu g,\label{L}\\
\Gamma(g_1,g_2)=&\;\frac{1}{\sqrt{\mu}}Q(\sqrt{\mu}g_1, \sqrt{\mu}g_2),
\end{align}
with $\nu(\mu)= \iint_{\mathbb{R}^3\times \mathbb{S}^2}|(u-v)\cdot \omega| \mu(u)\mathrm{d}\omega \mathrm{d}u$.
The null space of $L$, $\mathcal{N}(L)$, is a five dimensional closed subspace of $L^2(\mathbb{R}_v^3)$ whose orthonormal basis is
\begin{align*}
&\chi_0(v)=\frac{1}{\sqrt{\rho_0}}\sqrt{\mu},\\
&\chi_i(v)=\frac{v^i-u_0^i}{\sqrt{\rho_0 \theta_0}}\sqrt{\mu},\quad i=1,2,3,\\
&\chi_4(v)=\frac{1}{\sqrt{6\rho_0}}\Big\{\frac{|v-u_0|^2}{\theta_0}
-3\Big\}\sqrt{\mu}.
\end{align*}
It holds that $\int_{\mathbb{R}^3}\chi_i \chi_j \mathrm{d}v=\delta_{ij}$ for $0 \leq i,j\leq 4$.
We denote $\mathbf{P}$ as the $L^2$ orthogonal projection on $\mathcal{N}(L)$.

By using the property of $L^{-1}:\mathcal{N}^\perp (L)\rightarrow \mathcal{N}^\perp (L)$, we formally derive the expansion coefficients $F_1,\ldots,F_{2k-1}$  based on \eqref{Fi} and $F_0=\mu$. We first define for each $i\geq 1$ that
\begin{align}\label{macro-i}
\mathbf{P}\Big(\frac{F_i}{\sqrt{\mu}}\Big)
\equiv \frac{\rho_i}{\sqrt{\rho_0}}\chi_0+\sum_{j=1}^3 \sqrt{\frac{\rho_0}{\theta_0}}u_i^j\cdot \chi_j+\sqrt{\frac{3\rho_0}{2}}\frac{\theta_i}{\theta_0}\chi_4.
\end{align}
One can easily check that
\begin{align*}
  \int_{\mathbb{R}^3}F_i \mathrm{d}v=&\rho_i,\\
  \int_{\mathbb{R}^3}(v^k-u_0^k)F_i \mathrm{d}v=&\int_{\mathbb{R}^3}\frac{v^k-u_0^k}{\sqrt{\rho_0 \theta_0}}\sqrt{\mu}\sqrt{\rho_0 \theta_0}\frac{F_i}{\sqrt{\mu}}\mathrm{d}v\\
  =&\sqrt{\rho_0 \theta_0}\int_{\mathbb{R}^3}\chi_k \sqrt{\frac{\rho_0}{\theta_0}}u_i^k \chi_k \mathrm{d}v\\
  =& \rho_0 u_i^k,\quad k=1,2,3,\\
  \int_{\mathbb{R}^3} |v-u_0|^2 F_i \mathrm{d}v=&
  \int_{\mathbb{R}^3}\frac{|v-u_0|^2-3\theta_0}
  {\sqrt{6\rho_0}\theta_0}\sqrt{\mu}\sqrt{6\rho_0}\theta_0 \frac{F_i}{\sqrt{\mu}} \mathrm{d}v+3\theta_0 \rho_i\\
  =& \sqrt{6\rho_0}\theta_0\sqrt{\frac{3\rho_0}{2}}
  \frac{\theta_i}{\theta_0}\int_{\mathbb{R}^3}\chi_4^2 \mathrm{d}v+3\theta_0 \rho_i\\
  =& 3\theta_i \rho_0+3\theta_0 \rho_i.
\end{align*}
Projecting the equation of $F_{k+1}$ in \eqref{Fi} onto $\mathcal{N}(L)$, we obtain the equations for $[\rho_{k+1}, u_{k+1}, \theta_{k+1}]$:
\begin{align}
  &\partial_t \rho_{k+1}+\nabla\cdot (\rho_0u_{k+1}+\rho_{k+1}u_0)=0,\label{mass}\\
  &\rho_0 \{\partial_t u_{k+1}+(u_{k+1}\cdot \nabla)u_0+(u_0\cdot \nabla)u_{k+1}+\nabla \phi_{k+1}\}-\frac{\rho_{k+1}}{\rho_0}\nabla (\rho_0\theta_0)
  +\nabla (\rho_0\theta_{k+1}+\theta_0 \rho_{k+1})=\mathfrak{f}_k,\label{moment}\\
  &\rho_0 \{\partial_t \theta_{k+1}+\frac{2}{3}(\theta_{k+1}\nabla\cdot u_0+\theta_0\nabla\cdot u_{k+1})+u_0 \cdot \nabla \theta_{k+1}+u_{k+1}\cdot \nabla \theta_0\}
  =\frac{1}{3}\mathfrak{g}_k,\label{energy}
\end{align}
where 
\begin{align*}
  \mathfrak{f}_k:=&-\partial_j \int_{\mathbb{R}^3} \Big\{(v^i-u_0^i)(v^j-u_0^j)-\delta_{ij}\frac{|v-u_0|^2}{3}\Big\}F_{k+1}\mathrm{d}v
  -\sum_{\substack{i+j=k+1\\ i, j\geq 1}}\rho_j \nabla_x \phi_i,\\
  \mathfrak{g}_k:=&-\partial_i \bigg\{\int (v^i-u_0^i)(|v-u_0|^2-5\theta_0)F_{k+1} \mathrm{d}v+ 2u_0^j \int \Big((v^i-u_0^i)(v^j-u_0^j)-\delta_{ij}\frac{|v-u_0|^2}{3}\Big)F_{k+1}\mathrm{d}v
 \bigg\}\\
& -2u_0\cdot \mathfrak{f}_k- \sum_{\substack{i+j=k+1\\ i, j\geq 1}}(\rho_0 u_j+ \rho_j u_0)\nabla_x \phi_i.
\end{align*}
Here the subscript $k$ of $\mathfrak{f}_k$ and $\mathfrak{g}_k$ is to emphasize that they depends only on $F_i$ and $\phi_i$ for $0 \leq i \leq k$.
Indeed, the microscopic part of $\frac{F_{k+1}}{\sqrt{\mu}}$ is determined through the equation of $F_k$ in \eqref{Fi}:
\begin{align*}
&\{\mathbf{I}-\mathbf{P}\}\Big(\frac{F_{k+1}}{\sqrt{\mu}}\Big) =L^{-1}\Bigg(-\frac{\{\partial_t+v\cdot \nabla_x\}F_k-\sum_{\substack{i+j=k\\ i, j\geq 0}}\nabla_x \phi_i \cdot \nabla_v F_j
-\sum_{\substack{i+j=k+1\\ i, j\geq 1}}Q(F_i, F_j)}{\sqrt{\mu}}\Bigg).
\end{align*}
In Section \ref{sec2}, we shall discuss the existence and regularity of the coefficients $F_i$.

With \eqref{Fi} in hand, by eliminating the equations of $F_0, \ldots, F_{2k-2}$ on both sides of \eqref{plugging}, we obtain the remainder equations for $[R,\phi_R]$ as
\begin{equation}\label{vpbr}
\left\{ \begin{aligned}
&\{\partial_t+ v\cdot \nabla_x-\nabla_x \phi_0 \cdot \nabla_v\}R-\nabla_x \phi_R \cdot \nabla_v F_0-\frac{1}{\varepsilon}\{Q(F_0, R)+Q(R, F_0)\} \\
&\quad =\varepsilon^{k-1} Q(R, R)+ \sum_{i=1}^{2k-1}\e^{i-1}\{Q(F_i, R)+Q(R, F_i)\}+\e^k \nabla_x \phi_R \cdot \nabla_v R  \\
&\quad \quad\; +\sum_{i=1}^{2k-1}\e^i \{\nabla_x \phi_i \cdot \nabla_v R+\nabla_x \phi_R \cdot \nabla_v F_i\}+\e^{k-1}A,\\
 &\Delta (\e^k \phi_R)= \exp\big\{\phi_0+\e \phi_1+\cdots +\e^{2k-1}\phi_{2k-1}+\e^k \phi_R\big\}
 -\int_{\mathbb{R}^3}\e^k R\, \mathrm{d}v\\
&\quad\quad \qquad\quad
-T_{2k-1}\big(\exp\big\{\phi_0+\e \phi_1+\cdots +\e^{2k-1}\phi_{2k-1}\big\}\big),
\end{aligned}\right.
\end{equation}
where $T_{2k-1}(y(\e))$ is the $(2k-1)$th-order Taylor polynomial of the function $y(\e)$, and
\begin{align}\label{A}
A= -\{\partial_t+v\cdot \nabla_x\}F_{2k-1}+ \sum_{\substack{i+j\geq 2k\\1 \leq i, j\leq 2k-1}}\varepsilon^{i+j-2k}Q(F_i, F_j)+\sum_{\substack{i+j\geq 2k-1\\0 \leq i, j\leq 2k-1}}\varepsilon^{i+j-2k+1}\nabla_x \phi_i \cdot \nabla_v F_j.
\end{align}
It is the core of this article to control the remainder $R$ in the nonlinear dynamics through \eqref{vpbr}. To this end, we introduce a global Maxwellian 
\begin{align}\label{muM}
\mu_M(v)=\frac{1}{(2\pi \theta_M)^{\frac{3}{2}}}\exp\Big\{-\frac{|v|^2}{2\theta_M}\Big\},
\end{align}
where the constant $\theta_M$ satisfies 
$\theta_M \leq \inf_{t,x}\theta_0(t,x)\leq  \sup_{t, x}\theta_0(t,x)\leq 2\theta_M$, 
and define
\begin{align}\label{def-f-h}
  R =& \sqrt{\mu} f =\frac{\sqrt{\mu_M}}{w(v)}h,\quad w(v)=(1+|v|^2)^{\beta} \, \,\,\rm{for}\,\,\, \beta\geq \frac{7}{2}.
\end{align}
We shall establish specific estimates for both $f$ and $h$ to ensure the global validity of the Hilbert expansions \eqref{expansion1} and \eqref{expansion2} for the ionic Vlasov-Poisson-Boltzmann system \eqref{vpb}.

$\mathbf{Notations}.$
Throughout this paper, we denote the H\"{o}lder space by $C^{m,\alpha}(\mathbb{R}^3_x)$ for $(m, \alpha)\in \mathbb{N}\times (0,1)$.
For each nonnegative integer $m$ and real number $1\leq p\leq \infty$, we use $W^{m,p}$ to denote the standard nonhomogeneous Sobolev spaces for $(x,v)\in \mathbb{R}_x^3\times \mathbb{R}_v^3$ or $x\in \mathbb{R}_x^3$ and denote $H^s=W^{s,2}$ with the norm $\|\cdot\|_{H^s}$ or simply $\|\cdot\|_s$. In particular, $L^2=H^0$ is equipped with the norm $\|\cdot\|_{L^2}$ or simply $\|\cdot\|_0$. For the collision frequency $\nu(\mu)$ in \eqref{L}, we define a weighted $L^2$ norm $\|\cdot\|_\nu=\|\sqrt{\nu(\mu)}\cdot\|_0$. For notational simplicity, we denote the norm of $L^\infty$ by $\|\cdot\|_{L^\infty}$ or $\|\cdot\|_\infty$.
The bracket $(\cdot ,\cdot )$ is used to denote the $L^2$ inner product in $\mathbb{R}_x^3$ and $\langle \cdot,\cdot\rangle$ denotes the $L^2$ inner product in $\mathbb{R}_x^3\times \mathbb{R}_v^3$. 

Let $P\in \Psi^{m}$ denote a pseudo-differential operator with symbol $ p(x, \xi) \in S^{m}(\mathbb{R}^3 \times \mathbb{R}^3):=S^{m}_{1,0}(\mathbb{R}^3 \times \mathbb{R}^3) $, defined by
$$
Pu(x) = \frac{1}{(2\pi)^3} \int_{\mathbb{R}^3} e^{i x \cdot \xi} p(x, \xi) \mathcal{F}[u](\xi)  \mathrm{d}\xi,
$$
where $\mathcal{F}[u] $ is the Fourier transform of $ u \in \mathcal{S}(\mathbb{R}^d)$ (the Schwartz class), and $ S^{m}_{1,0} $ denotes the Hörmander symbol class of order $m$.
For $s\in \mathbb{N}$, we also  define the Fourier multiplier  operator $\Lambda^s\in \Psi^s$ with the symbol $(1+|\xi|^2)^{\frac{s}{2}}$.
Denote the commutator of $\Lambda^s\in \Psi^s$ and $P\in\Psi^m$ by $[\Lambda^s,P]=\Lambda^s \circ P-P\circ \Lambda^s\in\Psi^{s+m-1}$.

In what follows, we denote
\begin{align}\label{Hee}
H(\e):=H(t,x,\e)=\exp\big\{\phi_0+\e \phi_1+\cdots +\e^{2k-1}\phi_{2k-1}\big\}.
\end{align}
 And, $T_{N}(y(\e))$ denotes the $N$th-order Taylor polynomial of the function $y(\e)$.
We also define the time-dependent functions $\mathcal{I}_1(t,\e)$ and $\mathcal{I}_2(t,\e)$ as:
\begin{align}
&\mathcal{I}_1(t,\e):=\sum_{i=1}^{2k-1}[\e(1+t)]^{i-1}
+\bigg(\sum_{i=1}^{2k-1}[\e(1+t)]^{i-1}\bigg)^2,\label{I_1}\\
&\mathcal{I}_2(t,\e):=\sum_{2k\leq i+j\leq 4k-2}\e^{i+j-2k}(1+t)^{i+j-2},\label{I_2}
\end{align}
which will be used frequently throughout this paper. 

The letter $C$ denotes a generic positive constant independent of the Knudsen number $\e$ that may change in the estimates line by line. In the sequel, we shall emphasis that $C$ is independent of certain variables if necessary. For any time-dependent function $g(t,\cdot)$, we denote the initial datum $g(0,\cdot)$ by $g^{\rm{in}}(\cdot)$. We always use the superscript $t$ to denote the transpose of a row vector.

\smallskip
Now we present our main result as follows.
\begin{theorem}\label{thm1}
Suppose that the functions $[\rho_0(t,x),u_0(t,x),\phi_0(t,x)]$ are solutions to the compressible ionic Euler-Poisson system \eqref{EPS} constructed by \cite{Guo-ion-2011} provided that the smooth initial data $\rho_0^{\rm{in}}(x)-1$ and $u_0^{\rm{in}}(x)$ are sufficiently small and satisfies
$\nabla\times u_0^{\rm{in}}(x)=0$.
Then for the remainder $[R,\phi_R]$ in \eqref{expansion1}--\eqref{expansion2}, there exists an $\e_0>0$ and a constant $C>0$ independent of $\e$ such that for $0<\e<\e_0$,
\begin{equation}\label{ineq-in-thm1.2}
\begin{split}
&\sup_{0\leq t\leq \e^{-m}}\bigg\{\e^{\frac{3}{2}}\Big\|\frac{(1+|v|)^{2\beta+1}R}{\sqrt{\mu_M}}
\Big\|_\infty+\e^{\frac{3}{2}}\|D_x \phi_R\|_\infty\bigg\}\\
&\quad + \sup_{0\leq t\leq \e^{-m}}\bigg\{\e^{5}\Big\|D_{x,v}\Big(\frac{(1+|v|)^{2\beta}R}
{\sqrt{\mu_M}}\Big)
\Big\|_\infty+\e^{5}\|D^2_{xx} \phi_R\|_\infty\bigg\}\\
&\qquad+ \sup_{0\leq t\leq \e^{-m}}\bigg\{\Big\|\frac{R}{\sqrt{\mu}}\Big\|_0+\|\phi_R\|_0
+\|D_x\phi_R\|_0+\|D^2_{xx}\phi_R\|_0\bigg\}\\
&\quad\qquad\leq  C \bigg\{\e^{\frac{3}{2}}\Big\|\frac{(1+|v|)^{2\beta+1}R^{\rm{in}}}{\sqrt{\mu_M}}
\Big\|_\infty
+\e^{5}\Big\|D_{x,v}\Big(\frac{(1+|v|)^{2\beta}R^{\rm{in}}}{\sqrt{\mu_M}}\Big)
\Big\|_\infty+ \Big\|\frac{R^{\rm{in}}}{\sqrt{\mu^{\rm{in}}}}\Big\|_0+1\bigg\}
\end{split}
\end{equation}
for all $0<m<\frac{1}{2}\frac{2k-3}{2k-2}$ and $\beta\geq \frac{7}{2}$.
\end{theorem}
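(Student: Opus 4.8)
The plan is to run a continuity/bootstrap argument on the remainder system \eqref{vpbr}, closing a combined energy estimate in the macroscopic $L^2$ norm for $f=R/\sqrt\mu$ together with weighted $L^\infty$ bounds for $h=w\sqrt{\mu_M}^{-1}R$, and elliptic estimates for $\phi_R$. First I would introduce the \emph{a priori} assumption that the left-hand side of \eqref{ineq-in-thm1.2}, on the time interval $[0,T]$ with $T\le\e^{-m}$, is bounded by some fixed large constant $M$ (times the data plus $1$); the goal is then to improve the constant, so that the set of such $T$ is open, closed and nonempty. Under this assumption the nonlinear Poisson equation $\Delta(\e^k\phi_R)=\exp\{H(\e)+\e^k\phi_R\}-e^{H(\e)}-\int\e^k R\,\mathrm dv$ is handled first: writing it as $\Delta\phi_R - b(t,x)\phi_R = (\text{source})$ with $b=e^{H(\e)}\ge c>0$ (plus the controlled higher-order exponential remainder, which by Remark~\ref{taylor} and the \emph{a priori} $W^{1,\infty}$ smallness of $\e^k\phi_R$ is a perturbation), one gets by the refined elliptic estimates announced in the abstract a closed bound $\|\phi_R\|_{L^2}+\|D_x\phi_R\|_{L^2}+\|D^2_{xx}\phi_R\|_{L^2}\lesssim \|R/\sqrt\mu\|_{L^2}$ and, via $L^p$/Schauder theory, $\|D_x\phi_R\|_\infty+\|D^2_{xx}\phi_R\|_\infty\lesssim \|h\|_\infty + (\text{l.o.t.})$; the key point is that the zeroth-order coefficient $b>0$ has a favorable sign, so there is no loss and no spectral obstruction, and the exponential nonlinearity is absorbed because $\|\e^k\phi_R\|_{W^{1,\infty}}$ is $O(\e^{5-3/2+\cdots})$ small by the \emph{a priori} bound.

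Next I would carry out the macroscopic $L^2$ energy estimate. Testing the remainder transport equation against $f$ and using the standard coercivity of $L$ on $\{\mathbf I-\mathbf P\}f$, i.e. $\langle Lf,f\rangle\gtrsim \|\{\mathbf I-\mathbf P\}f\|_\nu^2$, the troublesome terms are: the singular-looking $\frac1\e\{Q(F_0,R)+Q(R,F_0)\}$ term, which after symmetrization becomes $-\frac1\e\langle Lf,f\rangle$ and is good; the force term $\nabla_x\phi_R\cdot\nabla_v F_0$ paired with $f$, which after integration by parts in $v$ and using the explicit Maxwellian $F_0=\mu$ produces $\int \nabla_x\phi_R\cdot(\text{polynomial})\sqrt\mu f$, controlled by $\|D_x\phi_R\|_{L^2}\|f\|_{L^2}$ and hence, by the Poisson estimate above, by $\|f\|_{L^2}^2$ — this is the crucial place where the coupling must close, and it does because the elliptic gain is lossless; the coefficient terms $\e^{i-1}\{Q(F_i,R)+Q(R,F_i)\}$ and $\e^i\nabla_x\phi_i\cdot\nabla_v R$, bounded using the $\mathcal I_1(t,\e)$-type growth of the coefficients from Section~\ref{sec2} and the fact that $\e(1+t)\le\e^{1-m}\to0$; the genuinely nonlinear terms $\e^{k-1}Q(R,R)$ and $\e^k\nabla_x\phi_R\cdot\nabla_v R$, which carry a spare power of $\e$ and are closed by the $L^\infty$ bound on $h$ (converting one $R$ factor to $L^\infty$); and finally the inhomogeneity $\e^{k-1}A$, controlled by $\e^{k-1}\mathcal I_2(t,\e)$-type estimates, which is exactly where the restriction $m<\frac12\frac{2k-3}{2k-2}$ enters — one needs $\e^{k-1}(1+t)^{\sim 2k-2}$ integrated over $[0,\e^{-m}]$ to stay bounded. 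A Gronwall argument then yields the $L^2$ part of \eqref{ineq-in-thm1.2}.

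For the $L^\infty$ bounds I would rewrite the equation for $h$ along characteristics of the transport operator $\partial_t+v\cdot\nabla_x-\nabla_x\phi_0\cdot\nabla_v$, using the splitting $\nu(\mu)=\nu$-decay plus the compact operator $K_w$ and iterating Duhamel twice (the standard Guo-type $L^\infty$-bootstrap via the $L^2$ bound on the double iterate) to absorb $K_w$; the weight $w=(1+|v|^2)^\beta$ with $\beta\ge7/2$ is chosen so that $K_w$ has the needed smallness/integrability and so that velocity-derivative growth from $\nabla_v$ hitting the weight is controlled. The $L^\infty$ estimate for $D_{x,v}h$ is obtained by differentiating the $h$-equation, where $D_x$ hitting $\nabla_x\phi_0$ and the coefficients is harmless, $D_{x,v}$ hitting the nonlinear and potential terms is closed using the already-established $D^2_{xx}\phi_R$ bounds and the \emph{a priori} smallness, and the commutators with the transport operator generate lower-order terms. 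The various powers of $\e$ ($\e^{3/2}$, $\e^5$) in front of the $L^\infty$ norms are exactly the ones that make these terms subordinate in the energy estimate and are propagated by the characteristics/Duhamel bookkeeping.

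The main obstacle, as the abstract signals, is the elliptic analysis of the nonlinear Poisson equation coupled into the energy estimate: one must show that the map $R\mapsto\phi_R$ loses no regularity and no power of $\e$, handle the exponential nonlinearity $e^{H(\e)+\e^k\phi_R}$ uniformly in $\e$ (its linearization has the good sign but its higher Taylor remainder must be absorbed into the \emph{a priori} ball), and produce the \emph{enclosed} $L^2\cap W^{1,\infty}$ estimates for the potential-dependent terms $\nabla_x\phi_R\cdot\nabla_v F_0$, $\e^i\nabla_x\phi_R\cdot\nabla_v F_i$, $\e^k\nabla_x\phi_R\cdot\nabla_v R$ in such a way that the constant in front, after summing the geometric-type series $\mathcal I_1,\mathcal I_2$, does not blow up over the long time interval $[0,\e^{-m}]$. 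Everything else — the coercivity of $L$, the $L^\infty$ iteration, the Gronwall closure — is by now fairly standard in the Hilbert-expansion literature; the new content and the delicate bookkeeping both live in that elliptic-kinetic coupling.
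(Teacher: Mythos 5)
Your proposal captures the overall architecture that is by now standard for Hilbert expansions (bootstrap on $L^2$--$L^\infty$, elliptic estimates for the potential remainder, Duhamel/characteristics for $h$), but there are two genuine gaps precisely where the new content of this paper lives, and either one would cause your argument to fail.

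First, your elliptic estimate $\|\phi_R\|_{H^2}\lesssim \|R/\sqrt\mu\|_{L^2}$ is missing the dominant source term in the Poisson equation. You have mis-stated the remainder equation: the truncation error is not $e^{H(\e)+\e^k\phi_R}-e^{H(\e)}$ but
$H(\e)(e^{\e^k\phi_R}-1)+\big[H(\e)-T_{2k-1}(H(\e))\big]$,
where $H(\e)=\exp\{\phi_0+\cdots+\e^{2k-1}\phi_{2k-1}\}$. The Lagrange remainder $H(\e)-T_{2k-1}(H(\e))$ looks like $O(\e^{2k})$ but is \emph{not}: the coefficients satisfy $\|\phi_i\|_{H^s}\lesssim (1+t)^{i-1}$ (Proposition \ref{propa}), so on the long window $t\le \e^{-m}$ each derivative of $H$ eats a power of $\e^{-m}$. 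Without the Fa\`a di Bruno--type bookkeeping of Proposition \ref{prop2} — which exploits the constraint $\sum\ell\varpi_\ell=2k$ to show the net size is $\e^{k+1/2}$ rather than, say, $\e^{k(1-m)}$ — you cannot conclude $\|\phi_R\|_{H^2}\lesssim \e^{1/2}+\|f\|_0$ or $\|D_x\phi_R\|_\infty\lesssim \e^{1/2}+\|h\|_\infty$, and the extra $+\e^{1/2}$ you need to see in \eqref{DphiR-Linfty} and \eqref{DDphiR-Linfty} is absent. Waving at Remark~\ref{taylor} and ``\emph{a priori} smallness'' does not substitute for this; the temporal growth is the point.

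Second, and more seriously for the global-in-time claim, your plan to control the macroscopic force coupling by $\big|\iint v\sqrt\mu f\cdot\nabla_x\phi_R\big|\lesssim \|D_x\phi_R\|_{L^2}\|f\|_{L^2}\lesssim \|f\|_{L^2}^2$ and then run Gronwall does not close. That term has no small prefactor and no time decay; Gronwall with an $O(1)$ coefficient over $[0,\e^{-m}]$ gives $\exp(C\e^{-m})$, which blows up as $\e\to 0$. The paper instead \emph{cancels} this term exactly: differentiating the nonlinear Poisson equation in time and pairing with $\psi_R$ produces the identity \eqref{L2-psi_R}, whose left-hand side is the sign-definite energy $\frac{\mathrm d}{\mathrm dt}\int H(\e)\,(\psi_R e^{\psi_R}-e^{\psi_R}+1)\,\mathrm dx + \tfrac12\frac{\mathrm d}{\mathrm dt}\|\nabla\psi_R\|_0^2$, and whose right-hand side contains $\iint \e^k v\sqrt\mu f\cdot\nabla_x\psi_R$ with the \emph{opposite sign} to the corresponding term in \eqref{L2}. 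After the cancellation, everything remaining on the right-hand side of \eqref{low} carries either $(1+t)^{-16/15}$, $\e\mathcal I_1$, or $\e^{1/2}$, all of which stay integrable over $[0,\e^{-m}]$ under the restriction $m<\tfrac12\tfrac{2k-3}{2k-2}$. This ``enclosed'' energy structure, together with the equivalence $\psi_R e^{\psi_R}-e^{\psi_R}+1\sim\psi_R^2$ once $\|\psi_R\|_\infty$ is small (needing the first gap above), is exactly the new mechanism your proposal omits.

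A smaller point: your characteristics are taken along $\partial_t+v\cdot\nabla_x-\nabla_x\phi_0\cdot\nabla_v$, but the weighted $h$-equation \eqref{weighted remainder eqn} absorbs \emph{all} of $\nabla_x\phi\cdot\nabla_v$ (including $\e^i\nabla_x\phi_i$ and $\e^k\nabla_x\phi_R$) into the transport, and Lemma~\ref{1} establishes regularity of those characteristics using $\|\phi\|_{C^{2,\alpha}}\le C$. If you leave $\e^k\nabla_x\phi_R\cdot\nabla_v h$ as a source instead, it is not lower order in the $L^\infty$ iteration and the Duhamel bootstrap does not close as stated.
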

\begin{rem}
In this paper $($including Theorem \ref{thm1}$)$, for the initial data, 
we assume that $\rho^{\rm{in}}_1, \ldots, \rho^{\rm{in}}_{2k-1}\in H^s(\mathbb{R}^3)$,  $u^{\rm{in}}_1, \ldots, u^{\rm{in}}_{2k-1}\in H^s(\mathbb{R}^3)$ and $\theta^{\rm{in}}_1, \ldots, \theta^{\rm{in}}_{2k-1}\in H^s(\mathbb{R}^3)$ for sufficiently large $s\in \mathbb{N}_+$, where $[\rho_j,u_j,\theta_j]$ $(1\leq j\leq 2k-1)$ satisfies \eqref{mass}--\eqref{energy}. These assumptions are used in Section \ref{sec2}.
In addition, we also assume that
\begin{align}\label{IC-assumption}
\e^{\frac{3}{2}}\Big\|\frac{(1+|v|)^{2\beta+1}R^{\rm{in}}}{\sqrt{\mu_M}}
\Big\|_\infty
+\e^{5}\Big\|D_{x,v}\Big(\frac{(1+|v|)^{2\beta}R^{\rm{in}}}{\sqrt{\mu_M}}\Big)
\Big\|_\infty+\Big\|\frac{R^{\rm{in}}}{\sqrt{\mu^{\rm{in}}}}\Big\|_0<\infty.
\end{align} 
\eqref{IC-assumption} is used to close the main assumptions throughout our paper that
$\|f\|_{L^2}+\|\e^{\frac{3}{2}}h\|_{L^\infty}+\|\e^5 h\|_{W^{1,\infty}}<\infty$, where $f$ and $h$ are defined via \eqref{def-f-h}. 
\end{rem}
\begin{rem}
We see from Theorem \ref{thm1} that the $L^2$ norm of the remainder $R$ is of $O(1)$, the weighted $L^\infty$ norm of the remainder $R$ is of $O(\e^{-\frac{3}{2}})$ and the weighted $L^\infty$ norm of $D_{x,v}R$ is of $O(\e^{-5})$. This is obtained from \eqref{order-5}, which controls the $L^\infty$ norm of $D_{x,v}R$ by $O(\e^{-3})\|R\|_\infty+O(\e^{-4})\|R\|_0$ and other nonsingular terms. If we multiply both sides of \eqref{order-5} by $\e^4$, then $O(\e)\|R\|_\infty$ appears on the right-hand side of \eqref{order-5}, which provides another singular term $O(\e^{-\frac{1}{2}})\|R\|_0$ by the estimate \eqref{order-3/2}. To eliminate this singularity, we should multiply \eqref{order-5} by at least $\e^5$ to get the estimates in Theorem \ref{thm1}. Moreover, to enclose the smallness assumption \eqref{assume}, we choose the expansion parameter $k\geq 6$. In this case, we get
\begin{align*}
  \sup_{0\leq t\leq \e^{-m}}\|F-\mu\|_0=&\sup_{0\leq t\leq \e^{-m}}\bigg\|\sum_{i=1}^{2k-1}\e^i F_i+\e^k R\bigg\|_0\\
  \leq & \sup_{0\leq t\leq \e^{-m}}\bigg\{\sum_{i=1}^{2k-1}\e^i\Big(\Big\|\sqrt{\mu}\mathbf{P}
  \Big(\frac{F_i}{\sqrt{\mu}}\Big)\Big\|_0+\Big\|\sqrt{\mu}
  (\mathbf{I}-\mathbf{P})
  \Big(\frac{F_i}{\sqrt{\mu}}\Big)\Big\|_0\Big)+\e^k \Big\|\sqrt{\mu}\frac{R}{\sqrt{\mu}}\Big\|_0\bigg\}\\
  =& O(\e)
\end{align*}
for all $0<m\leq \frac{1}{2}\frac{2k-3}{2k-2}$, which leads to the compressible ionic Euler-Poisson limit. The restriction of $m$ is due to a technical requirement that the temporal growth of the coefficient term $A$ in \eqref{A} can be controlled uniformly in $\e$ along the dynamics of the remainder system \eqref{vpbr} as the estimate \eqref{time}.
\end{rem}
\begin{rem}
From \eqref{ineq-in-thm1.2} in Theorem \ref{thm1}, we get the quantitative estimates of $\phi_R$ who solves the nonlinear Poisson equation \eqref{vpbr}$_2$. In Section \ref{Sec-phi_R}, we first obtain the uniform boundedness of $\|\e^k \phi_R\|_{L^\infty}$, then, we deduce some refined elliptic estimates as \eqref{psiR-e}, \eqref{DphiR-Linfty} and \eqref{DDphiR-Linfty}.
Roughly speaking, \eqref{psiR-e} implies that:
$$\|\phi_R\|_{L^\infty}\leq C \|\phi_R\|_{H^2}\leq C (\e^{\frac{1}{2}}+\|f\|_{L^2}),$$ 
\eqref{DphiR-Linfty} implies that:
$$\|D_x \phi_R\|_{L^\infty}\leq C\|\phi_R\|_{W^{2,p}}\leq C (\e^{\frac{1}{2}}+\|h\|_{L^\infty}),\quad p>3,$$
\eqref{DDphiR-Linfty} implies that:
$$\|D^2_{xx} \phi_R\|_{L^\infty}\leq C\|\phi_R\|_{C^{2,\alpha}}\leq C (\e^{\frac{1}{2}}+\|h\|_{L^\infty}+\|D_x h\|_{L^\infty}),\quad 0<\alpha<1.$$
Hence $\|\phi_R\|_\infty+\e^{\frac{3}{2}}\|D_x \phi_R\|_\infty+\e^5\|D^2_{xx}\phi_R\|_\infty$ is bounded uniformly in $\e$.
Here, the term $\e^{\frac{1}{2}}$ on the right-hand side of the above inequalities is provided by the coefficient term 
$$\exp\big\{\phi_0+\e \phi_1+\cdots +\e^{2k-1}\phi_{2k-1}\big\}-T_{2k-1}\big(\exp\big\{\phi_0+\e \phi_1+\cdots +\e^{2k-1}\phi_{2k-1}\big\}\big):=H(\e)-T_{2k-1}(H(\e))$$ 
in equation \eqref{vpbr}$_2$. Actually, in Proposition \ref{prop2} we carefully deduce for $t\leq \e^{-m}$, $0<m<\frac{1}{2}$ that
$$\e^{-k}\|H(\e)-T_{2k-1}(H(\e))\|_{L^2}\leq C \e^{\frac{1}{2}},\quad 
\e^{-k}\|H(\e)-T_{2k-1}(H(\e))\|_{W^{1,\infty}}\leq C \e^{\frac{1}{2}},$$
which are essentially due to the fact that $\phi_i$ $(1\leq i\leq 2k-1)$ possesses a polynomial temporal growth in high order $H^s$ norm. 

On the other hand, 
from the remainder equations \eqref{vpbr}, we directly conclude a  closed $L^2$ estimate \eqref{f-DphiR-L2}, involving $\|f\|_{L^2}$ and $\|\phi_R\|_{H^1}$. It should be pointed out that the estimate of the extra term $\|D^2_{xx}\phi_R\|_{L^2}$ in Theorem \ref{thm1} is obtained by elliptic estimates.  
The main reason of the validity of the elliptic estimates is that the 
nonlinear Poisson equation \eqref{vpbr}$_2$ can be regarded as
a linear elliptic equation once we get $\phi_R\in H^2(\mathbb{R}^3)\subset L^\infty(\mathbb{R}^3)$, which provides a new positive zero order term enhancing the coercivity.
\end{rem}

\subsection{Previous works}
\subsubsection{Hilbert expansion}
The rigorous mathematical justification of Hilbert expansions in kinetic theory has witnessed rapid advancements in recent decades. For the classical Boltzmann equation, Caflisch \cite{Caflisch} pioneered the use of truncated Hilbert expansions coupled with remainder decomposition to establish the well-known compressible Euler limit. Building upon this framework, Lachowicz \cite{Lachowicz,Lachowicz-2} incorporated initial layer corrections to derive Euler limits for the Boltzmann, Enskog, and Povzner equations. However, the solutions constructed through these early approaches around Euler equations lacked guaranteed nonnegativity, which is a critical physical requirement.

This limitation was overcome by Guo-Jang-Jiang \cite{krm,cpam} through their innovative $L^2$--$L^\infty$ method, originally developed by Guo \cite{Guo2010} for the well-posedness of Boltzmann equation in bounded domains. 
Their approach is then proved to be robust for multiple kinetic models: such as the Vlasov-Poisson-Boltzmann system for electrons by Guo-Jang \cite{Juhi} with hard-sphere case, then extended to the soft potentials case in \cite{Fucai-siam} by ourselves, 
the relativistic Boltzmann equation by Speck-Strain \cite{Speck}, 
the relativistic Vlasov-Maxwell-Boltzmann system by Guo-Xiao \cite{Guo-xiao},
the two-species Boltzmann equations by Wu-Yang \cite{WuTianfang-JDE}, the quantum Vlasov-Poisson-Boltzmann system by Jiang and Zou \cite{jiang-zou-jde}, 
and the Vlasov-Maxwell-Boltzmann system to the  incompressible Navier-Stokes-Fourier-Maxwell system with Ohm’s
law recently by Jiang-Luo-Zhang \cite{jiangningARMA2023}, see also Zhou et al. \cite{wzgx-2025}.
For the Landau type equations and non-cutoff Boltzmann type equations, Ouyang-Wu-Xiao \cite{Wulei-QAM} proved the global-in-time convergence of the Hilbert expansion in  the relativistic Vlasov-Maxwell-Landau system. Moreover, Lei-Liu-Xiao-Zhao \cite{Leiyuanjie-JLMS} settled the more complicated cases, including the non-relativistic Vlasov-Maxwell-Landau system and non-cutoff Vlasov-Maxwell-Boltzmann system.

For the initial boundary problems in half space,  
Guo-Huang-Wang \cite{wangyong} justified the validity of Hilbert expansion to the Boltzmann equation for specular reflection boundary condition, and Ouyang-Wang \cite{Ouyangjing-JDE} recently extended this argument to the soft potentials case. Recently, Jiang-Luo-Tang successfully worked out for the Maxwell reflection boundary condition case \cite{jiangning-arxiv} and the completely diffuse boundary condition case \cite{jiangningTAMS}. 

\subsubsection{Kinetic models for ions}
Extensive research has been conducted on the global well-posedness of the electronic Vlasov-Poisson-Boltzmann system. Notably, Guo \cite{Guo2002} established classical solutions in $\mathbb{T}^3$, while Yang-Zhao \cite{zhao2006} and Yang-Yu-Zhao \cite{Hongjun} addressed the $\mathbb{R}^3$ case. For the optimal temporal decay rates of strong solutions in $\mathbb{R}^3$, see Duan-Strain \cite{Duan2011} and Li-Yang-Zhong \cite{Zhong2021}. Well-posedness for hard and soft potentials in 
$\mathbb{R}^3$ was further developed by Duan-Yang-Zhao \cite{zhao2012, zhao2013} and Xiao-Xiong-Zhao \cite{huijiang2017}.
For a domain with boundary, interested reader can refer to, for example,  \cite{kim,Fucai-JDE2021,cdl}.

In stark contrast, mathematical studies on kinetic models for ions remain comparatively limited due to the exponential nonlinearities introduced by the Poisson equation. For the ionic Vlasov-Poisson system, Bouchut \cite{Bouchut} first proved the global weak solutions in $\mathbb{R}^3$. Subsequent work by Bardos et al. \cite{Bardos2018} established the existence and uniqueness of weak solutions in a 2D bounded domain  and a 3D torus. Griffin-Iacobelli \cite{Iacobelli2021a, Iacobelli2021b} and Cesbron-Iacobelli \cite{Iacobelli2023} later derived classical solutions for the 3D torus, whole space $\mathbb{R}^3$, and bounded domains, respectively. Regarding the ionic and bipolar Vlasov-Poisson-Boltzmann systems in $\mathbb{R}^3$, Li-Yang-Zhong \cite{Zhong2016} established global well-posedness and optimal decay rates for strong solutions via spectral analysis.
 Flynn and  Guo \cite{fy-24} studied the  massless electron limit of the bipolar Vlasov-Poisson-Landau system
 and obtain the ionic Vlasov-Poisson-Landau system. Flynn \cite{flynn-25} established the  
local well-posedness of the ionic Vlasov-Poisson-Landau system.

\subsubsection{Compressible ionic Euler-Poisson system}
Previous studies on the ionic Euler-Poisson system include \cite{grenier} for one dimensional case by Cordier and Grenier, who investigated the quasi-neutral limit to 1D compressible Euler system, as letting the Debye length for ion flows
tend to zero. Later, G\'{e}rard, Han-Kwan and Rousset  \cite{Gerard-2013, Gerard-2014} studied the quasi-neutral limit for the ionic Euler-Poisson system over the half space with various boundary conditions. Jung-Kwon-Suzuki \cite{Suzuki-2016-M3AS} discussed quasi-neutral limit in the presence of plasma sheaths.
For the asymptotic stability of plasma sheaths of the compressible ionic Euler-Poisson system, one refers to Duan-Yin-Zhu \cite{YinHaiyan2021} and Yao-Yin-Zhu \cite{YinHaiyan-2025}.
Meanwhile,  Guo-Pausader \cite{Guo-ion-2011} established the global well-posedness of the 3D compressible ionic Euler-Poisson system in $\mathbb{R}^3$ with irrotational initial velocity.

However, the compressible ionic Euler-Poisson limit of the ionic Vlasov-Poisson-Boltzmann system has yet to be mathematically validated.
Given that the ionic Vlasov-Poisson-Boltzmann system serves as a fundamental kinetic model for ion-scale phenomena in diluted plasmas, our work focuses on establishing this limit rigorously.  Our analysis aims to bridge the kinetic and fluid descriptions of ion dynamics by rigorously deriving the compressible ionic Euler-Poisson system as the leading-order asymptotic of the ionic Vlasov-Poisson-Boltzmann system.

\subsection{Main difficulties and our strategies in the proof}
\subsubsection{Formations of the model}
Comparing the Hilbert expansion for the electronic Vlasov-Poisson-Boltzmann system issued by Guo-Jang \cite{Juhi} to our ionic Vlasov-Poisson-Boltzmann system, the main difference lies in the nonlinear Poisson equation with the new exponential term $e^{\phi}$.  Formally, 
we take Taylor's expansion of  
$$\exp(\phi)=\exp\bigg(\sum_{i=0}^\infty\e^{i}\phi_i\bigg)$$
in $\e$ around $\e=0$, then the coefficients of different orders are determined, namely, 
\begin{gather*}
\exp\bigg(\sum_{i=0}^\infty\e^{i}\phi_i\bigg)=\tilde{A}_0+\e \tilde{A}_1+\e^2 \tilde{A}_2+\cdots,\\
\tilde{A}_n=\frac{\mathrm{d}^n}{\mathrm{d}\e^n}\Big|_{\e=0}
\bigg\{\exp\bigg(\sum_{i=0}^\infty\e^{i}\phi_i\bigg)\bigg\}
\triangleq A_n\exp(\phi_0), \,\,\,n\geq 0.
\end{gather*}
It is important to notice that, for any $N\in \mathbb{N}_+$,
\begin{align*}
A_n=&\exp(-\phi_0)\tilde{A}_n=\exp(-\phi_0)
\frac{\mathrm{d}^n}{\mathrm{d}\e^n}\Big|_{\e=0}
\bigg\{\exp\bigg(\sum_{i=0}^N\e^{i}\phi_i\bigg)\bigg\},
\,\,\,0\leq n\leq N,
\end{align*}
which implies that
\begin{align*}
\exp(\phi_0) (A_0+\e A_1+\cdots +\e^N A_N)=T_N \bigg(\exp\bigg(\sum_{i=0}^N\e^{i}\phi_i\bigg)\bigg),
\end{align*}
where $T_N(g(\e))$ denotes the $N$-th order Taylor polynomial of $g(\e)$ for the parameter $\e>0$.
Expanding the nonlinear Poisson equation \eqref{vpb}$_2$ with the Hilbert expansion
$$\Delta \bigg(\sum_{i=0}^\infty\e^{i}\phi_i\bigg)
=\exp\bigg(\sum_{i=0}^\infty\e^{i}\phi_i\bigg)
-\int_{\mathbb{R}^3}\bigg(\sum_{i=0}^\infty\e^{i}F_i\bigg)\mathrm{d}v$$
up to the $N$-th order yields
$$\Delta \bigg(\sum_{i=0}^N\e^{i}\phi_i\bigg)
=T_N \bigg(\exp\bigg(\sum_{i=0}^N\e^{i}\phi_i\bigg)\bigg)
-\int_{\mathbb{R}^3}\bigg(\sum_{i=0}^N\e^{i}F_i\bigg)\mathrm{d}v.$$
Taking the difference of the above equation with $N=2k-1$ and the nonlinear Poisson equation \eqref{plugging} with the truncated Hilbert expansions \eqref{expansion1} and \eqref{expansion2}, it remains
$$\Delta (\e^k \phi_R)
=\exp\bigg(\sum_{i=0}^{2k-1}\e^{i}\phi_i\bigg)\exp(\e^k \phi_R)-T_{2k-1} \bigg(\exp\bigg(\sum_{i=0}^{2k-1}\e^{i}\phi_i\bigg)\bigg)
-\int_{\mathbb{R}^3}\e^k R\,\mathrm{d}v,$$
which is exactly the remainder equation \eqref{vpbr}$_2$ for the unknown $\phi_R$ .
\subsubsection{$H^s$ estimates of the coefficients $\phi_n$ $(1\leq n\leq 2k-1)$}
To explore the well-posedness of equation \eqref{vpbr}$_2$, we deduce the energy estimates of the coefficients $\phi_n$ $(1\leq n\leq 2k-1)$, which solves 
\begin{align*}
\Delta \phi_n=A_n\exp(\phi_0)-\rho_n
\end{align*}
coupling with the system \eqref{mass}--\eqref{energy} for $[\rho_n,u_n,\theta_n]$, where 
\begin{align*}
A_n
\equiv& \phi_n +\sum_\varkappa C_\varkappa \prod_{1\leq j\leq n-1}(\phi_j)^{\varkappa_j}, 
\quad  \sum_{ {1\leq j\leq n-1, \varkappa_j\in \mathbb{N}_+}} j \varkappa_j=n,
\end{align*}
with $\varkappa=[\varkappa_1, \ldots, \varkappa_{n-1}]$ and the constants $C_\varkappa>0$. When $[\rho_i,u_i,\theta_i,\phi_i]$ $(1\leq i\leq n-1)$ are determined, $\phi_n$ satisfies a linear 
elliptic equation  and $U_n=[\rho_n,u_n,\theta_n]^t$, $V_n=[0,\nabla \phi_n,0]^t$ satisfy a linear strictly hyperbolic system
\begin{align*}
 \mathcal{A}_0\{\partial_t U_n+V_n\}+\sum_{i=1}^3 \mathcal{A}_i\partial_i U_n+\mathcal{B} U_n=\mathcal{G}_n, 
\end{align*}
as in \eqref{linear-symmetric-system}. In the $H^s$ framework, 
it is crucial to observe that 
the inner product of $\mathcal{A}_0 V_n$ and $U_n$ could recover 
an extra term 
$$\frac{\mathrm{d}}{\mathrm{d}t}\big(\|e^{\frac{\phi_0}{2}}\phi_n\|^2_{H^s}+\|\nabla \phi_n\|^2_{H^s}\big)$$ 
on the left-hand side through the linear elliptic equation of $\phi_n$, rather than
simply a rough upper bound $$\|\mathcal{A}_0\|_{W^{s,\infty}}\|U_n\|_{H^s}\|V_n\|_{H^s},$$
which is lack of sufficient temporal decays like $t^{-16/15}$.

Specifically, invoking the high-order boundedness of the smooth function $\phi_0$, we prove that there exists a $\Psi^{-2}$ pseudo-differential operator $(e^{\phi_0}-\Delta)^{-1}$ as the parametrix of the elliptic operator $e^{\phi_0}-\Delta$, which is used to get the necessary estimates of $\partial_t \phi_0$. The estimates of commutators between the Fourier multiplier $\Lambda^s$ and other pseudo-differential operators are standard for $s\in \mathbb{N}_+$, which are used to conclude that
\begin{align*}
\frac{\mathrm{d}}{\mathrm{d}t}(\|U_n\|^2_{H^s}+\|V_n\|^2_{H^s}+\|\phi_n\|^2_{H^s})
\leq &\,\,C(1+t)^{-\frac{16}{15}}
(\|U_n\|^2_{H^s}+\|V_n\|^2_{H^s}+\|\phi_n\|^2_{H^s}) \\
&+ C(1+t)^{n-2}(\|U_n\|_{H^s}+\|V_n\|_{H^s}+\|\phi_n\|_{H^s}).
\end{align*}  
It follows from Gronwall's inequality that
$$\|U_n\|_{H^s}+\|V_n\|_{H^s}+\|\phi_n\|_{H^s}\leq C (1+t)^{n-1}$$ for $1\leq n\leq 2k-1$, $s\geq 0$, which is the main result of Proposition \ref{propa}.

\subsubsection{The estimates of $H(\e)-T_{2k-1}(H(\e))$ and its first-order derivatives}
We see from the above deductions that $\|\e^k \phi_R\|_{W^{2,6}_x}$ has a uniform-in-$\e$ upper bound. Furthermore, we can prove that $\| \phi_R\|_{L^\infty}$ has a uniform-in-$\e$ upper bound, which implies the smallness of the size of $\e^k \phi_R$. Actually, the key point for the proof is to observe the size of $H(\e)-T_{2k-1}(H(\e))$, where $H(\e)$ is defined in \eqref{Hee}. Formally, by the definition of $T_{2k-1}$,  $H(\e)-T_{2k-1}(H(\e))$ seems to be of $\e^{2k}$ order, but the $L^2$ or $L^\infty$ norms of $\phi_1, \ldots, \phi_{2k-1}$ provide polynomial growths in time, which would cancel some orders of $\e$. Hence we should investigate the Lagrange remainders of $H(\e)-T_{2k-1}(H(\e))$: for each $(t,x)$, there exists a $\vartheta_{t,x}\in (0,1)$ such that
\begin{align*}
H(\e)-T_{2k-1}(H(\e))= \frac{\e^{2k}}{(2k)!}H^{(2k)}(\vartheta_{t,x} \e) 
=  \frac{\e^{2k}}{(2k)!}H(\vartheta_{t,x} \e) \sum_\varpi C_\varpi \prod_{1\leq \ell \leq 2k-1}(g^{(\ell)}(\vartheta_{t,x} \e))^{\varpi_\ell},
\end{align*}
which is reminiscent of the Fa\`{a} di Bruno's formula,
where $\sum_{1\leq \ell\leq 2k-1}\ell \varpi_\ell=2k$ and 
\begin{align*}
g^{(\ell)}(\vartheta_{t,x} \e)=\ell!\phi_\ell+(\ell+1)!(\vartheta_{t,x} \e)^1\phi_{\ell+1}+\cdots + \frac{(2k-1)!}{(2k-\ell-1)!}(\vartheta_{t,x} \e)^{2k-\ell-1}\phi_{2k-1},\quad 0\leq \ell\leq 2k-1.
\end{align*}
Due to the fact that $\|\phi_{\ell+1}\|_{H^s}\leq C (1+t)^{\ell}$, then for $t\leq \e^{-m}$ $(0<m<\frac{1}{2})$, the $L^2$ or $L^\infty$ norms of $g^{(\ell)}(\vartheta_{t,x} \e)$ absorbs $\e^{\frac{\ell-1}{2}}$ factor and releases $\e^{(\frac{1}{2}-m)(\ell-1)}$ factor as shown in \eqref{g-ell-L2} and \eqref{g-ell-Linfty}. It follows that the $L^2$ or $L^\infty$ norms of $H(\e)-T_{2k-1}(H(\e))$ contributes $O(\e^{k+\frac{1}{2}})$. With this estimate in hand, we can find a constant $C>0$ independent of $\e$ and $\phi_R$ such that
$\|\phi_R\|_{H^2}\leq C (\e^{\frac{1}{2}}+\|f\|_{L^2})$, which is crucial for getting the $L^2$ estimate of $f$ in Section \ref{sec3}. We also consider the $L^2$ norm of $\partial_t (H(\e)-T_{2k-1}(H(\e)))$ and $L^\infty$ norm of $\nabla_x (H(\e)-T_{2k-1}(H(\e)))$, which are used to get refined estimates about $\phi_R$. The expressions of $\partial_t (H(\e)-T_{2k-1}(H(\e)))$ and $\nabla_x (H(\e)-T_{2k-1}(H(\e)))$ is explicit, noticing that $\partial_t T_{2k-1}(H(\e))=T_{2k-1}(\partial_t H(\e))$ and $\nabla_x T_{2k-1}(H(\e))=T_{2k-1}(\nabla_x H(\e))$, which rely on the continuity of mixed partial derivatives $D_t D_\e H(\e)$ and $D_x D_\e H(\e)$ proved in Section \ref{Sec-phi_R}.
\subsubsection{The interplay between $L^2$ and $W^{1,\infty}$ norms}
In Sections \ref{sec3} and \ref{sec4}, we do not repeat the details of those estimates similar to Guo-Jang \cite{Juhi} for electronic Vlasov-Poisson-Boltzmann system, and we emphasis here the differences raised from the ion dynamics. 
Note that the $t-$derivative of the nonlinear Poisson equation 
$$\Delta \psi_R=  (e^{\psi_R}-1)H(\e)+H(\e)-T_{2k-1}(H(\e))-\int_{\mathbb{R}^3}\e^k R\, \mathrm{d}v$$
provides a new energy structure $\frac{\mathrm{d}}{\mathrm{d}t}\int_{\mathbb{R}^3} (\psi_R e^{\psi_R}-e^{\psi_R}+1)H(\e)\mathrm{d}x$, where $\psi_R e^{\psi_R}-e^{\psi_R}+1$ is nonnegative. Moreover, when $|\psi_R|\ll 1$, it holds
$\psi_R e^{\psi_R}-e^{\psi_R}+1\sim \psi_R^2$, thus in essence the $L^2$ energy of the ionic Vlasov-Poisson-Boltzmann system involves 
$\|f\|_{L^2}$, $\|\nabla \phi_R\|_{L^2}$ and $\|\phi_R\|_{L^2}$.

To close the $L^2$ estimate \eqref{low}, the weighted $W^{1,\infty}$ estimates are needed. At first, Lemma \ref{1} (similar to Guo-Jang \cite{Juhi} for electrons) is revisited for the electrostatic potential for ions. Next, we use the method of characteristic to estimate $\|h\|_{L^\infty}$ through the expression \eqref{h-formula}. Note that, on the seventh line of \eqref{h-formula}, there is a source involving the potential $\nabla_x \phi_R$ for ions. We should recover $\|h\|_{L^\infty}$ from $\|\nabla_x \phi_R\|_{L^\infty}$, which requires new elliptic estimates. Our strategy lies in $(H(\e)-\Delta)^{-1}\in \Psi^{-2}: L^p\rightarrow W^{2,p}$, $p>3$, Morrey's embedding and $\|H(\e)-T_{2k-1}(H(\e))\|_{L^p}\leq C \e^k\e^{\frac{1}{2}}$ to deduce $\|\nabla_x \phi_R\|_{L^\infty}\leq C \|\phi_R\|_{W^{2,p}}\leq C \e^{\frac{1}{2}}+C\|h\|_{L^\infty}$. It contributes an $\e^{\frac{3}{2}}\e^{1}O(\e^{\frac{1}{2}})=O(\e^3)$ term  in the final $L^\infty$ estimate \eqref{order-3/2}, which proves Lemma \ref{lemma-h-Linfty}.

For the $L^\infty$ norm of $D_{x,v}h$, we need to recover $\|h\|_{W^{1,\infty}}$ from $D_xD_x \phi_R$. By using 
$(H(\e)-\Delta)^{-1}\in \Psi^{-2}:C^{0,\alpha}\rightarrow C^{2,\alpha}$ for $0<\alpha<1$, $C^{0,\alpha}\subset W^{1,\infty}$ and $\|H(\e)-T_{2k-1}(H(\e))\|_{W^{1,\infty}}\leq C \e^k\e^{\frac{1}{2}}$, we obtain that
$\|D_xD_x\phi_R\|_{L^\infty}\leq C \e^{\frac{1}{2}}+C\|h\|_{W^{1,\infty}}$. It contributes an $O(\e^{\frac{13}{2}})=\e^{5}\e^{1}O(\e^{\frac{1}{2}})$ term. Notice however that, in the final $W^{1,\infty}$ estimate \eqref{order-5}, the term $\e^2 \|h\|_{L^\infty}$ on the right-hand side provides a lowest order term $\e^{\frac{1}{2}}O(\e^3)=O(\e^{\frac{7}{2}})$, which is presented in Lemma \ref{lemma-Dh-Linfty}.
Based on these fundamental estimates, the next step is to combine the $L^\infty$ and $W^{1,\infty}$ norms, which can be carried out in the same lines of Guo-Jang \cite{Juhi}.

\subsection{Outline of the paper}
This paper is organized as follows. In Section \ref{sec2}, we estimate the temporal growth of the coefficients $\phi_i$ $(1\leq i\leq 2k-1)$ within $H^s$ norms. 
Section \ref{Sec-phi_R} is devoted to studying the elliptic estimates of the remainder $\phi_R$ of the nonlinear Poisson equation by controlling its source terms within $L^2\cap W^{1,\infty}$ norm. 
In Section \ref{sec3} and Section \ref{sec4}, we respectively establish the $L^2$ and weighted $W^{1,\infty}$ estimates of $R$ via the remainder system \eqref{vpbr}. 
Section \ref{sec5} encloses the $L^2$--$L^\infty$ estimates
and complete the proof of Theorem \ref{thm1}.

\section{Coefficients of the Hilbert Expansion}\label{sec2}
For each given integer $k\geq 1$, let $[\rho_k,u_k,\theta_k]$ satisfy the system 
\eqref{mass}--\eqref{energy}. We denote 
\begin{align}\label{Uk-Vk}
  U_k=\begin{bmatrix}
        \rho_k \\
        (u_k)^t \\
         \theta_k 
      \end{bmatrix}, \quad V_k=\begin{bmatrix}
                                 0 \\
                                 (\nabla \phi_k)^t \\
                                 0 
                               \end{bmatrix},
\end{align}
then \eqref{mass}--\eqref{energy} are equivalent to 
\begin{align}\label{linear-system}
 \partial_t U_k+V_k+\sum_{i=1}^3 \overline{\mathcal{A}}_i\partial_i U_k+\overline{\mathcal{B}}U_k=\overline{\mathcal{G}}_k, 
\end{align}
where 
\begin{align*}
  \overline{\mathcal{A}}_i=\begin{bmatrix}
                   u_0^i & \rho_0 e_i & 0 \\
                   \frac{\theta_0}{\rho_0}(e_i)^t & u_0^i I_{3\times 3} & (e_i)^t \\
                   0 & \frac{2}{3}\theta_0 e_i & u_0^i 
                 \end{bmatrix},\quad
  \overline{\mathcal{B}}=\begin{bmatrix}
                   \nabla\cdot u_0 & \nabla\rho_0  & 0 \\
                   \frac{-\theta_0}{\rho^2_0}(\nabla \rho_0)^t & \nabla (u_0)^t & (\nabla \ln \rho_0)^t \\
                   0 & \nabla \theta_0 & \frac{2}{3}\nabla\cdot u_0 
                 \end{bmatrix},\quad
  \overline{\mathcal{G}}_k=\begin{bmatrix}
                 0 \\
                 \frac{1}{\rho_0}\mathfrak{f}_{k-1} \\
                 \frac{1}{3\rho_0}\mathfrak{g}_{k-1} 
               \end{bmatrix}.
\end{align*}
It is crucial to observe that $\overline{\mathcal{A}}_i$ is symmetrizable. By introducing  
\begin{align*}
  \mathcal{A}_0=\begin{bmatrix}
                   \frac{\theta_0}{\rho_0} \\
                   & \rho_0 I_{3\times 3} \\
                   && \frac{3\rho_0}{2\theta_0}
                 \end{bmatrix},
\end{align*}
\eqref{linear-system} is equivalent to
\begin{align}\label{linear-symmetric-system}
 \mathcal{A}_0\{\partial_t U_k+V_k\}+\sum_{i=1}^3 \mathcal{A}_i\partial_i U_k+\mathcal{B} U_k=\mathcal{G}_k, 
\end{align}
where
\begin{align*}
  \mathcal{A}_i=\begin{bmatrix}
                   \frac{\theta_0}{\rho_0}u_0^i & \theta_0 e_i & 0 \\
                   \theta_0(e_i)^t & \rho_0 u_0^i I_{3\times 3} & \rho_0(e_i)^t \\
                   0 & \rho_0 e_i & \frac{3\rho_0}{2\theta_0}u_0^i 
                 \end{bmatrix},\quad
  \mathcal{B}=\begin{bmatrix}
                   \frac{\theta_0}{\rho_0}\nabla\cdot u_0 & \frac{\theta_0}{\rho_0}\nabla\rho_0  & 0 \\
                   -\theta_0(\nabla \ln \rho_0)^t & \rho_0 \nabla (u_0)^t & (\nabla  \rho_0)^t \\
                   0 & \frac{3\rho_0}{2\theta_0}\nabla\theta_0 & \frac{\rho_0}{\theta_0}\nabla\cdot u_0 
                 \end{bmatrix},\quad
  \mathcal{G}_k=\begin{bmatrix}
                 0 \\
                 \mathfrak{f}_{k-1} \\
                 \frac{1}{2\theta_0}\mathfrak{g}_{k-1} 
               \end{bmatrix}.
\end{align*}
Here $e_i$, $i=1,2,3$, are the unit row base vectors in $\mathbb{R}^3$.

The well-posedness of \eqref{linear-symmetric-system} follows from the standard linear theory (see e.g. \cite{Majda}). Below we shall establish more delicate energy estimates of \eqref{linear-symmetric-system} to control the temporal growth of the $H^s$ norm $\|U_k(t)\|_s+\|V_k(t)\|_s+\|\phi_k(t)\|_s$, which will be used in the arguments in Sections \ref{Sec-phi_R} and \ref{sec3}.  

\begin{proposition}\label{propa}
For the integer $k\geq 1$, assume that $U_k$ and $V_k$ in \eqref{Uk-Vk} solve the system \eqref{linear-symmetric-system}, then it holds
\begin{align*}
  \|U_k(t)\|_s+\|V_k(t)\|_s+\|\phi_k(t)\|_s\leq C (1+t)^{k-1}\quad \mathrm{for}\,\,\,s\geq 0,
\end{align*}
where $C>0$ is independent of $t$.
\end{proposition}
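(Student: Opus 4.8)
The plan is to prove the estimate by induction on $k$, using the linear hyperbolic structure of \eqref{linear-symmetric-system} together with a crucial coupling to the linear elliptic equation
\begin{align*}
\Delta \phi_k = e^{\phi_0}A_k - \rho_k = e^{\phi_0}\phi_k - \rho_k + e^{\phi_0}\Big(\textstyle\sum_\varkappa C_\varkappa \prod_j (\phi_j)^{\varkappa_j}\Big),
\end{align*}
that $\phi_k$ satisfies. The base case $k=1$ has zero source ($\mathfrak{f}_0=\mathfrak{g}_0=0$ and the nonlinear term in $A_1$ vanishes), so $\|U_1\|_s + \|V_1\|_s + \|\phi_1\|_s \leq C$ follows directly. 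For the inductive step, assume the bound holds for $[U_j,V_j,\phi_j]$ with $j \leq k-1$. Then $\mathcal{G}_k$, which is built from $\mathfrak{f}_{k-1},\mathfrak{g}_{k-1}$, depends only on $F_i,\phi_i$ for $i\leq k-1$; a careful bookkeeping of the products of $\|\phi_i\|_{H^s}\leq C(1+t)^{i-1}$ and the analogous bounds for $F_i$ (whose macroscopic parts are the $[\rho_i,u_i,\theta_i]$, and whose microscopic parts are given by $L^{-1}$ applied to lower-order data) shows $\|\mathcal{G}_k(t)\|_{H^s} \leq C(1+t)^{k-2}$; similarly the nonlinear contribution to the elliptic source for $\phi_k$ is bounded by $C(1+t)^{k-2}$ in $H^s$.

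The heart of the argument is the refined energy estimate. Applying $\Lambda^s$ to \eqref{linear-symmetric-system}, pairing with $\Lambda^s U_k$ in $L^2_x$, and using the symmetry of $\mathcal{A}_0,\mathcal{A}_i$ together with standard commutator estimates $[\Lambda^s,P]\in\Psi^{s+m-1}$, the terms involving $\sum_i \mathcal{A}_i\partial_i U_k$, $\mathcal{B}U_k$, and the commutators are all controlled by $C\|\nabla_{x,t}(\rho_0,u_0,\theta_0)\|_{W^{s,\infty}}(\|U_k\|_s^2+\|V_k\|_s^2)$, and crucially $\|\nabla_{x,t}(\rho_0,u_0,\theta_0)\|_{W^{s,\infty}} \leq C(1+t)^{-16/15}$ by the decay estimates of the Guo--Pausader solution \cite{Guo-ion-2011}. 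The key new point, as anticipated in the introduction, is the term $(\mathcal{A}_0 V_k, \Lambda^s U_k)$: writing $V_k = [0,(\nabla\phi_k)^t,0]^t$ and $\mathcal{A}_0 = \mathrm{diag}(\theta_0/\rho_0, \rho_0 I, 3\rho_0/(2\theta_0))$, this pairing is $(\rho_0\nabla\phi_k, \Lambda^s u_k)$ up to commutators; using the mass equation $\partial_t\rho_k + \nabla\cdot(\rho_0 u_k + \rho_k u_0)=0$ to rewrite $\rho_0 u_k$, integrating by parts, and invoking the elliptic equation $\Delta\phi_k = e^{\phi_0}\phi_k - \rho_k + (\text{source})$ differentiated appropriately, one extracts $\frac{1}{2}\frac{d}{dt}\big(\|e^{\phi_0/2}\Lambda^s\phi_k\|_0^2 + \|\Lambda^s\nabla\phi_k\|_0^2\big)$ modulo terms controlled by the decaying factor times the energy plus $C(1+t)^{k-2}(\|U_k\|_s + \|V_k\|_s + \|\phi_k\|_s)$. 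Here one needs the $\Psi^{-2}$ parametrix $(e^{\phi_0}-\Delta)^{-1}$ to handle $\partial_t\phi_0$ and to relate $\|V_k\|_s = \|\nabla\phi_k\|_s$ back to $\|\phi_k\|_s$ and $\|\rho_k\|_s$ plus the source.

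Collecting terms, one arrives at the differential inequality
\begin{align*}
\frac{d}{dt}\mathcal{E}_k(t) \leq C(1+t)^{-16/15}\mathcal{E}_k(t) + C(1+t)^{k-2}\sqrt{\mathcal{E}_k(t)},
\end{align*}
where $\mathcal{E}_k = \|U_k\|_s^2 + \|V_k\|_s^2 + \|\phi_k\|_s^2$ (equivalent to the actual energy involving the weight $e^{\phi_0/2}$, since $\phi_0$ is bounded above and below). Since $\int_0^\infty (1+t)^{-16/15}\,dt < \infty$, Gronwall's inequality applied to $\sqrt{\mathcal{E}_k}$ yields $\sqrt{\mathcal{E}_k(t)} \leq C\big(\sqrt{\mathcal{E}_k(0)} + \int_0^t (1+\tau)^{k-2}\,d\tau\big) \leq C(1+t)^{k-1}$, which closes the induction. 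I expect the main obstacle to be the rigorous extraction of the extra coercive term $\frac{d}{dt}(\|e^{\phi_0/2}\phi_k\|_s^2 + \|\nabla\phi_k\|_s^2)$ from $(\mathcal{A}_0 V_k, \Lambda^s U_k)$: this requires carefully differentiating the elliptic equation in both $t$ and $x$, commuting $\Lambda^s$ past the variable-coefficient operators $e^{\phi_0}-\Delta$ and its parametrix, and ensuring that all error terms so generated either carry the integrable decay factor $(1+t)^{-16/15}$ or are of lower order $(1+t)^{k-2}$ — the exponential coefficient $e^{\phi_0}$ and its derivatives must be tracked through every integration by parts.
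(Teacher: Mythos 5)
Your approach matches the paper's in all essential respects: the same symmetric hyperbolic energy estimate via $\Lambda^s$ and commutators, the same key device of converting the pairing $(\mathcal{A}_0 V_k,\Lambda^s U_k)$ into the coercive time derivative $\tfrac{1}{2}\tfrac{\mathrm{d}}{\mathrm{d}t}\big(\|e^{\phi_0/2}\Lambda^s\phi_k\|_0^2+\|\Lambda^s\nabla\phi_k\|_0^2\big)$ by routing through the mass equation and the elliptic equation for $\phi_k$, the same $\Psi^{-2}$ parametrix $(e^{\phi_0}-\Delta)^{-1}$ to estimate $\partial_t\phi_0$, the same integrable decay factor $(1+t)^{-16/15}$ from Guo--Pausader, and the same Gronwall closure giving $(1+t)^{k-1}$. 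The paper writes out $k=1$ and $k=2$ explicitly before the generic inductive step; you state the induction directly, but the content is identical.

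One factual error in your base case: $\mathfrak{f}_0$ and $\mathfrak{g}_0$ are not zero. By definition $\mathfrak{f}_0$ and $\mathfrak{g}_0$ involve moments of $F_1$ against traceless and odd polynomials in $v-u_0$; the macroscopic part $\mathbf{P}(F_1/\sqrt{\mu})$ drops out by orthogonality, but the microscopic part
$\{\mathbf{I}-\mathbf{P}\}\big(F_1/\sqrt{\mu}\big)=L^{-1}\big(-\{\partial_t+v\cdot\nabla_x-\nabla_x\phi_0\cdot\nabla_v\}\mu/\sqrt{\mu}\big)$
is nonzero and carries first derivatives of $[\rho_0,u_0,\theta_0,\phi_0]$. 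The paper does not claim $\mathcal{G}_1=0$; it observes instead that $\mathcal{G}_1$ decays like $(1+t)^{-16/15}$ because those derivatives do, and it is this \emph{integrable decay}, not vanishing, that yields $\|U_1\|_s+\|V_1\|_s+\|\phi_1\|_s\leq C$ after Gronwall. Your stated conclusion for $k=1$ is therefore correct, but the justification should be replaced by the integrable-decay argument.
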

\begin{proof}
We first consider \eqref{linear-symmetric-system} for $k=1$:
\begin{align}\label{linear-symmetric-system-1}
 \mathcal{A}_0\{\partial_t U_1+V_1\}+\sum_{i=1}^3 \mathcal{A}_i\partial_i U_1+\mathcal{B} U_1=\mathcal{G}_1.
\end{align}
Here, we define
\begin{align}\label{def-P}
\mathcal{P}U_1:=\mathcal{A}_0 \partial_t U_1+\sum_{i=1}^3 \mathcal{A}_i\partial_i U_1,
\end{align}
and, as mentioned before, we denote the norm of $H^s(\mathbb{R}^3)$ by $\|\cdot\|_s$ for $s\in \mathbb{N}$. In particular, the norm of $L^2(\mathbb{R}^3)$ is denoted by $\|\cdot\|_0$.

\emph{Step 1}. For the $\|\cdot\|_0$ norm, we have
\begin{align*}
\frac{\mathrm{d}}{\mathrm{d}t}(U_1, \mathcal{A}_0 U_1)
=& \,(\partial_t U_1,\mathcal{A}_0 U_1)+(U_1, \partial_t \mathcal{A}_0U_1)+(U_1, \mathcal{A}_0\partial_t U_1)\\
=&\,2\bigg(U_1, \mathcal{P}U_1-\sum_{i=1}^3 \mathcal{A}_i\partial_i U_1\bigg)+(U_1, \partial_t \mathcal{A}_0U_1)\\
=&\,2(\mathcal{P}U_1,U_1)+\bigg(U_1, \bigg(\partial_t \mathcal{A}_0+\sum_{i=1}^3 \partial_i\mathcal{A}_i\bigg)U_1\bigg)\\
=& \,2(-\mathcal{A}_0 V_1,U_1)+(-\mathcal{B}U_1+\mathcal{G}_1,U_1)+\bigg(U_1, \bigg(\partial_t \mathcal{A}_0+\sum_{i=1}^3 \partial_i\mathcal{A}_i\bigg)U_1\bigg).
\end{align*}
The first term on the last line reads
\begin{align*}
  (-\mathcal{A}_0 V_1,U_1)=&\int -\rho_0 \nabla \phi_1 \cdot u_1 \mathrm{d}x\\
  =& \int \phi_1 \nabla\cdot (\rho_0 u_1)\mathrm{d}x\\
  =& \int \phi_1 (-\partial_t \rho_1-\nabla\cdot (\rho_1 u_0))\mathrm{d}x  \quad (\rm{using}\,\,\,\eqref{mass} )\\
  =& \int -\phi_1 \partial_t \rho_1 \mathrm{d}x+\int \nabla \phi_1\cdot \rho_1 u_0 \mathrm{d}x.
\end{align*}
By using \eqref{Fi} of $\e^1$ step, we get
\begin{align*}
\int \phi_1 \partial_t \rho_1 \mathrm{d}x
=&\,\int \phi_1 (\partial_t(e^{\phi_0})\phi_1+e^{\phi_0}\partial_t \phi_1-\Delta (\partial_t \phi_1))\mathrm{d}x\\
=&\, \frac{1}{2}\frac{\mathrm{d}}{\mathrm{d}t}\big(\|\nabla \phi_1\|^2_0+\big\|\phi_1 e^{\frac{\phi_0}{2}}\big\|_0^2\big)+\frac{1}{2}\int \phi_1^2 \partial_t (e^{\phi_0})\mathrm{d}x.
\end{align*}
In summary, we have
\begin{align*}
& \frac{\mathrm{d}}{\mathrm{d}t}\Big(\Big\|\sqrt{U_1^t \mathcal{A}_0 U_1}\Big\|_0^2+\|\nabla \phi_1\|^2_0+\big\|\phi_1 e^{\frac{\phi_0}{2}}\big\|_0^2\Big)\\
 &\quad=  2 \int \nabla \phi_1\cdot \rho_1 u_0 \mathrm{d}x-\int \phi_1^2 \partial_t (e^{\phi_0})\mathrm{d}x+(-\mathcal{B}U_1+\mathcal{G}_1,U_1)+\bigg(U_1, \bigg(\partial_t \mathcal{A}_0+\sum_{i=1}^3 \partial_i\mathcal{A}_i\bigg)U_1\bigg).
\end{align*}
Note that $\mathcal{B}$, $\mathcal{G}_1$, $\partial_t \mathcal{A}_0$ and $\partial_i\mathcal{A}_i$ consist of $\rho_0-1,u_0, \theta_0$ and their first spatial derivatives, which decay at the rate of $t^{-16/15}$ by Proposition \ref{result-of-guoyan},  thus we have
\begin{align*}
&\left|2 \int \nabla \phi_1\cdot \rho_1 u_0 \mathrm{d}x+(-\mathcal{B}U_1+\mathcal{G}_1,U_1)+\bigg(U_1, \bigg(\partial_t \mathcal{A}_0+\sum_{i=1}^3 \partial_i\mathcal{A}_i\bigg)U_1\bigg)\right|\\
&\quad\leq \frac{C}{(1+t)^{\frac{16}{15}}}\{\|U_1\|_0^2+\|V_1\|_0^2\}
+\frac{C}{(1+t)^{\frac{16}{15}}}\{\|U_1\|_0+\|V_1\|_0\}.
\end{align*}
To deal with the term $\int \phi_1^2 \partial_t (e^{\phi_0})\mathrm{d}x$, we study the estimates of $\partial_t \phi_0$. Due to the $\e^0$ step of \eqref{Fi}, it holds $\Delta \phi_0=e^{\phi_0}-\rho_0$ and thus
$$\Delta \partial_t\phi_0=e^{\phi_0}\partial_t \phi_0-\partial_t \rho_0.$$
From Remark \ref{phi_0}, we know that $\phi_0\in C_b^\infty(\mathbb{R}^3)$ of smooth bounded functions with bounded $x-$derivatives of all orders. Hence $e^{\phi_0}\in C^\infty(\mathbb{R}^3)$ and there exists constants $C_0,C_1>0$ such that $0<C_0\leq e^{\phi_0}\leq C_1$. Therefore, the symbol of $e^{\phi_0(x)}-\Delta$, which is $e^{\phi_0(x)}+|\xi|^2$, satisfies
\begin{align*}
  |\partial_\xi^\alpha \partial_x^\beta (e^{\phi_0(x)}+|\xi|^2)|
  \leq\, & C_{\alpha, \beta} (1+|\xi|)^{2-|\alpha|},\\
  e^{\phi_0(x)}+|\xi|^2\geq\, &C_0(1+|\xi|)^2,
\end{align*} 
where constants $C_{\alpha,\beta}$ and $C_0$ are uniform for all  $t\in [0,\infty)$ as a parameter in $\phi_0(t,x)$ due to $\phi_0\in L^\infty(\mathbb{R}_t^+; H^s(\mathbb{R}_x^3))$.
It implies that
$e^{\phi_0(x)}-\Delta$ is an elliptic pseudo-differential operator of $\Psi^2$, which possesses a parametrix of $\Psi^{-2}$ (see e.g. \cite{Alinhac}).
Note that, by Lax-Milgram theorem, there exists a bounded linear operator $(e^{\phi_0(x)}-\Delta)^{-1}:L^2\rightarrow H^1$.  
Thus the operator $(e^{\phi_0(x)}-\Delta)^{-1}$ is a  $\Psi^{-2}$ parametrix of $e^{\phi_0(x)}-\Delta$.  By the H\"{o}lder boundedness properties of the $\Psi^{-2}$ pseudo-differential operators (see e.g. \cite{stein}), we have
$(e^{\phi_0}-\Delta)^{-1}:C^{k,\gamma}\rightarrow C^{k+2,\gamma}$ for $k\in \mathbb{N}_+$ and $\gamma\in (0,1)$, with operator norm constants uniform in $t$.  
It follows that 
\begin{align}\label{partial_t phi_0}
  \|\partial_t \phi_0\|_{C^{k,\gamma}}
  =&\,\|(e^{\phi_0}-\Delta)^{-1} \partial_t \rho_0\|_{C^{k,\gamma}}\nonumber\\
  =&\,\|-(e^{\phi_0}-\Delta)^{-1} \nabla\cdot (\rho_0 u_0)\|_{C^{k,\gamma}}\nonumber\\
  \leq &\, C \|\rho_0 u_0\|_{C^{k-1,\gamma}}\nonumber\\
  \leq & \,C \|\rho_0 u_0\|_{W^{k,\infty}}\nonumber\\
  \leq & \,\frac{C}{(1+t)^{\frac{16}{15}}}.
\end{align}
Thus we get 
\begin{align*}
\int \phi_1^2 \partial_t (e^{\phi_0})\mathrm{d}x\leq \frac{C}{(1+t)^{\frac{16}{15}}}\big\|\phi_1 e^{\frac{\phi_0}{2}}\big\|_0^2.
\end{align*}
We conclude that
\begin{align*}
 & \frac{\mathrm{d}}{\mathrm{d}t}\Big(\Big\|\sqrt{U_1^t \mathcal{A}_0 U_1}\Big\|_0^2+\|\nabla \phi_1\|^2_0+\big\|\phi_1 e^{\frac{\phi_0}{2}}\big\|_0^2\Big)\\
 &\,\,\,\leq \frac{C}{(1+t)^{\frac{16}{15}}}\big\{\|U_1\|_0^2+\|V_1\|_0^2+\big\|\phi_1 e^{\frac{\phi_0}{2}}\big\|_0^2\big\}
+\frac{C}{(1+t)^{\frac{16}{15}}}\{\|U_1\|_0+\|V_1\|_0\}.
\end{align*}

\emph{Step 2}. For the integer $s>0$, define Fourier multiplier  operator $\Lambda^s$ whose symbol is $(1+|\xi|^2)^{\frac{s}{2}}$. If $U_1\in H^s$, then $\Lambda^s U_1\in L^2$. We have
\begin{align*}
  \frac{\mathrm{d}}{\mathrm{d}t}(\Lambda^s U_1,\mathcal{A}_0 \Lambda^s U_1)
  =&\,2(\mathcal{P}\Lambda^s U_1,\Lambda^s U_1)+\bigg(\Lambda^s U_1, \bigg(\partial_t \mathcal{A}_0+\sum_{i=1}^3 \partial_i\mathcal{A}_i\bigg)\Lambda^s U_1\bigg)\\
  =&\,2(\Lambda^s \mathcal{P} U_1,\Lambda^s U_1)+2([\mathcal{P},\Lambda^s] U_1,\Lambda^s U_1)
  +\bigg(\Lambda^s U_1, \bigg(\partial_t \mathcal{A}_0+\sum_{i=1}^3 \partial_i\mathcal{A}_i\bigg)\Lambda^s U_1\bigg)\\
  =&\,-2(\Lambda^s (\mathcal{A}_0 V_1),\Lambda^s U_1)+2(\Lambda^s (-\mathcal{B}U_1+\mathcal{G}_1),\Lambda^s U_1)\\
  &\,+2([\mathcal{P},\Lambda^s] U_1,\Lambda^s U_1)+\bigg(\Lambda^s U_1, \bigg(\partial_t \mathcal{A}_0+\sum_{i=1}^3 \partial_i\mathcal{A}_i\bigg)\Lambda^s U_1\bigg)\\
  =&\,-2(\mathcal{A}_0\Lambda^s  V_1,\Lambda^s U_1)-2([\Lambda^s, \mathcal{A}_0]V_1, \Lambda^s U_1)+2(\Lambda^s (-\mathcal{B}U_1+\mathcal{G}_1),\Lambda^s U_1)\\
  &\,+2([\mathcal{P},\Lambda^s] U_1,\Lambda^s U_1)+\bigg(\Lambda^s U_1, \bigg(\partial_t \mathcal{A}_0+\sum_{i=1}^3 \partial_i\mathcal{A}_i\bigg)\Lambda^s U_1\bigg).
\end{align*}
The first term on the right-hand side of the above equality reads 
\begin{align*}
(\mathcal{A}_0\Lambda^s  V_1,\Lambda^s U_1)
=\,& \int \rho_0 \Lambda^s \nabla \phi_1 \cdot \Lambda^s u_1 \mathrm{d}x\\
=\,&  \int \Lambda^s \nabla \phi_1 \cdot ([\rho_0,\Lambda^s] u_1+\Lambda^s (\rho_0 u_1)) \mathrm{d}x\\
=\,& \int \Lambda^s \nabla \phi_1 \cdot [\rho_0,\Lambda^s] u_1 \mathrm{d}x
-\int \Lambda^s  \phi_1  \Lambda^s( \nabla \cdot (\rho_0 u_1)) \mathrm{d}x\\
=\,& \int \Lambda^s \nabla \phi_1 \cdot [\rho_0,\Lambda^s] u_1 \mathrm{d}x
+\int \Lambda^s  \phi_1  \Lambda^s( \partial_t \rho_1) \mathrm{d}x
-\int \Lambda^s  \nabla\phi_1  \cdot \Lambda^s(  \rho_1 u_0) \mathrm{d}x.
\end{align*}
Routine calculations yield
\begin{align*}
\int \Lambda^s  \phi_1  \Lambda^s( \partial_t \rho_1) \mathrm{d}x
=\,& \int \Lambda^s  \phi_1  \Lambda^s( -\Delta \partial_t \phi_1+\partial_t (e^{\phi_0})\phi_1+e^{\phi_0}\partial_t \phi_1) \mathrm{d}x\\
=\,&\frac{1}{2}\frac{\mathrm{d}}{\mathrm{d}t}\big(\|\Lambda^s \nabla \phi_1\|_0^2
+\big\|e^{\frac{\phi_0}{2}}\Lambda^s  \phi_1\big\|_0^2 \big)
+\frac{1}{2}(\Lambda^s \phi_1, \partial_t (e^{\phi_0})\Lambda^s \phi_1 )\\
&\,+ (\Lambda^s \phi_1, [\Lambda^s,\partial_t (e^{\phi_0})] \phi_1)
+(\Lambda^s \phi_1, [\Lambda^s,e^{\phi_0}] \partial_t\phi_1 ).
\end{align*}
Therefore, it holds
\begin{align}\label{Hs-estimate-1}
&\frac{\mathrm{d}}{\mathrm{d}t}\big(\big\|\sqrt{(\Lambda^s U_1)^t \mathcal{A}_0 (\Lambda^s U_1)}\big\|_0^2+\|\Lambda^s \nabla \phi_1\|_0^2
+\big\|e^{\frac{\phi_0}{2}}\Lambda^s  \phi_1\big\|_0^2\big)\nonumber\\
  &\quad =
-(\Lambda^s \phi_1, \partial_t (e^{\phi_0})\Lambda^s \phi_1 )
-2 (\Lambda^s \phi_1, [\Lambda^s,\partial_t (e^{\phi_0})] \phi_1)
-2(\Lambda^s \phi_1, [\Lambda^s,e^{\phi_0}] \partial_t\phi_1 )\nonumber\\
  &\qquad-2\int \Lambda^s \nabla \phi_1 \cdot [\rho_0,\Lambda^s] u_1 \mathrm{d}x
+2\int \Lambda^s  \nabla\phi_1  \cdot \Lambda^s(  \rho_1 u_0) \mathrm{d}x
  -2([\Lambda^s, \mathcal{A}_0]V_1, \Lambda^s U_1)\nonumber\\
  &\qquad+2(\Lambda^s (-\mathcal{B}U_1+\mathcal{G}_1),\Lambda^s U_1)+2([\mathcal{P},\Lambda^s] U_1,\Lambda^s U_1)+\bigg(\Lambda^s U_1, \bigg(\partial_t \mathcal{A}_0+\sum_{i=1}^3 \partial_i\mathcal{A}_i\bigg)\Lambda^s U_1\bigg).
\end{align}
Recalling  \eqref{partial_t phi_0}, the first term on the right-hand side of \eqref{Hs-estimate-1} is controlled by 
$C \big\|e^{\frac{\phi_0}{2}}\Lambda^s  \phi_1\big\|_0^2$.
To deal with the other terms in \eqref{Hs-estimate-1}, we study the properties of the commutators $[\Lambda^s,\partial_t (e^{\phi_0})]$, $[\Lambda^s,e^{\phi_0}]$, $[\rho_0,\Lambda^s]$, $[\Lambda^s, \mathcal{A}_0]$ and $[\mathcal{P},\Lambda^s]$ for $s\in \mathbb{N}_+$.
Since $\partial_t (e^{\phi_0})$, $e^{\phi_0}$, $\rho_0$ and $\mathcal{A}_0$ are smooth and bounded, they are $\Psi^0$ pseudo-differential operators. By the definition of $\mathcal{P}$ in \eqref{def-P}, $\mathcal{P}$ is a $\Psi^1$ pseudo-differential operator. Clearly, $\Lambda^s \in \Psi^s$. Thus we obtain from the standard theory of commutators (see e.g. \cite{Alinhac}) that
\begin{align*}
  [\Lambda^s,\partial_t (e^{\phi_0})],\,\,[\Lambda^s,e^{\phi_0}], \,\,[\rho_0,\Lambda^s], \,\,[\Lambda^s, \mathcal{A}_0]\in \Psi^{s-1},\quad 
  [\mathcal{P}, \Lambda^s]\in \Psi^s
\end{align*}
whose asymptotic expansions are finite due to $s\in \mathbb{N}_+$.
Since the linear operator $\Psi^s:H^s\rightarrow L^2$ is bounded, we have
\begin{align*}
  ([\mathcal{P},\Lambda^s] U_1,\Lambda^s U_1)\leq &\,\|[\mathcal{P},\Lambda^s] U_1\|_0 \|\Lambda^s U_1\|_0\\
  \leq &\, \frac{C}{(1+t)^{\frac{16}{15}}} \|U_1\|_s \|\Lambda^s U_1\|_0\\
  =& \,\frac{C}{(1+t)^{\frac{16}{15}}}\|\Lambda^s U_1\|_0^2.
\end{align*}
Similarly, we get
\begin{align*}
&-2 (\Lambda^s \phi_1, [\Lambda^s,\partial_t (e^{\phi_0})] \phi_1)
  -2\int \Lambda^s \nabla \phi_1 \cdot [\rho_0,\Lambda^s] u_1 \mathrm{d}x
  -2([\Lambda^s, \mathcal{A}_0]V_1, \Lambda^s U_1)\\
\,& \quad\leq \frac{C}{(1+t)^{\frac{16}{15}}}(\|\Lambda^s U_1\|_0^2+\|\Lambda^s V_1\|_0^2+\|\Lambda^s  \phi_1\|_0^2) 
  \leq    \frac{C}{(1+t)^{\frac{16}{15}}}\big(\|\Lambda^s U_1\|_0^2+\|\Lambda^s V_1\|_0^2+\big\|e^{\frac{\phi_0}{2}}\Lambda^s  \phi_1\big\|_0^2\big).
\end{align*}
We also have
\begin{align*}
&2\int \Lambda^s  \nabla\phi_1  \cdot \Lambda^s(  \rho_1 u_0) \mathrm{d}x
  +2(\Lambda^s (-\mathcal{B}U_1+\mathcal{G}_1),\Lambda^s U_1)
  +\bigg(\Lambda^s U_1, \bigg(\partial_t \mathcal{A}_0+\sum_{i=1}^3 \partial_i\mathcal{A}_i\bigg)\Lambda^s U_1\bigg)\\
&\quad\leq \,\frac{C}{(1+t)^{\frac{16}{15}}}(\|U_1\|_s^2+\|V_1\|_s^2)
+\frac{C}{(1+t)^{\frac{16}{15}}}(\|U_1\|_s+\|V_1\|_s).
\end{align*}
The remainder term on the right-hand side of  \eqref{Hs-estimate-1} is $(\Lambda^s \phi_1, [\Lambda^s,e^{\phi_0}] \partial_t\phi_1 )$. From the $\e^1$ step of \eqref{Fi}, we know that $\phi_1=(e^{\phi_0}-\Delta)^{-1}\rho_1$ and 
\begin{align*}
\Delta \partial_t \phi_1=\partial_t (e^{\phi_0})\phi_1+e^{\phi_0}\partial_t \phi_1-\partial_t \rho_1.
\end{align*}
Hence, it holds
\begin{align*}
\partial_t \phi_1=&\,(e^{\phi_0}-\Delta)^{-1}\left(\partial_t \rho_1-\partial_t (e^{\phi_0})(e^{\phi_0}-\Delta)^{-1}\rho_1\right)\\
=& \,-(e^{\phi_0}-\Delta)^{-1}\left(\nabla\cdot ( \rho_0 u_1+\rho_1u_0)+\partial_t (e^{\phi_0})(e^{\phi_0}-\Delta)^{-1}\rho_1\right).
\end{align*}
It follows by $(e^{\phi_0}-\Delta)^{-1}(\in \Psi^{-2}):H^s\rightarrow H^{s+2}$ that
\begin{align*}
  \|\partial_t \phi_1\|_s\leq C \|\rho_0 u_1+\rho_1u_0\|_{s-1}+C\|\rho_1\|_{s-4}\leq C \|U_1\|_s.
\end{align*}
Then we obtain
$$(\Lambda^s \phi_1, [\Lambda^s,e^{\phi_0}] \partial_t\phi_1 )\leq \frac{C}{(1+t)^{\frac{16}{15}}}\big(\| U_1\|_s^2+\big\|e^{\frac{\phi_0}{2}}\Lambda^s  \phi_1\big\|_0^2\big).$$
In summary, for any integer $s\geq 0$, we deduce that
\begin{align}\label{U1-s}
  &\frac{\mathrm{d}}{\mathrm{d}t}
\big(\|\sqrt{(\Lambda^s U_1)^t \mathcal{A}_0 (\Lambda^s U_1)}\|_0^2+\|\Lambda^s \nabla \phi_1\|_0^2
+\big\|e^{\frac{\phi_0}{2}}\Lambda^s  \phi_1\big\|_0^2\big)\nonumber\\
 &\quad \leq \frac{C}{(1+t)^{\frac{16}{15}}}\big(\|\Lambda^s U_1\|_0^2+\|\Lambda^s V_1\|_0^2+\big\|e^{\frac{\phi_0}{2}}\Lambda^s  \phi_1\big\|_0^2\big)\nonumber\\
  &\qquad +\frac{C}{(1+t)^{\frac{16}{15}}}\big(\|\Lambda^s U_1\|_0+\|\Lambda^s V_1\|_0+\big\|e^{\frac{\phi_0}{2}}\Lambda^s  \phi_1\big\|_0\big).
\end{align}
Utilizing Gronwall's inequality, we arrive at
$\|U_1(t)\|_s+\|V_1(t)\|_s+\|\phi_1(t)\|_s\leq C(1+t)^0$.

\smallskip
Now, we consider the equation \eqref{linear-symmetric-system} for $k=2$:
\begin{align}\label{linear-symmetric-system-2}
 \mathcal{A}_0\{\partial_t U_2+V_2\}+\sum_{i=1}^3 \mathcal{A}_i\partial_i U_2+\mathcal{B} U_2=\mathcal{G}_2.
\end{align}
There are two main differences from the $k=1$ case \eqref{linear-symmetric-system-1}.

(i). For the $\|\cdot\|_0$ norm, the term
\begin{align*}
  (-\mathcal{A}_0 V_2,U_2)=&\int -\rho_0 \nabla \phi_2 \cdot u_2 \mathrm{d}x=\int -\phi_2 \partial_t \rho_2 \mathrm{d}x+\int \nabla \phi_2\cdot \rho_2 u_0 \mathrm{d}x.
\end{align*}
Due to \eqref{Fi} of $\e^2$ step, we get
\begin{align*}
\int \phi_2 \partial_t \rho_2 \mathrm{d}x=\,&\int \phi_2 (\partial_t(e^{\phi_0})\phi_2+e^{\phi_0}\partial_t \phi_2-\Delta (\partial_t \phi_2))\mathrm{d}x\\
&+ \int \phi_2 \Big(\partial_t(e^{\phi_0})\frac{\phi_1^2}{2}+e^{\phi_0}\partial_t \Big(\frac{\phi_1^2}{2}\Big)\Big)\mathrm{d}x\\
=\,& \frac{1}{2}\frac{\mathrm{d}}{\mathrm{d}t}\big(\|\nabla \phi_2\|^2_0+\|\phi_2 e^{\frac{\phi_0}{2}}\|_0^2\big)+\frac{1}{2}\int \phi_2^2 \partial_t (e^{\phi_0})\mathrm{d}x\\
&+ \frac{1}{2}\int \phi_2 (\partial_t(e^{\phi_0})\phi_1^2+e^{\phi_0}\partial_t (\phi_1^2))\mathrm{d}x,
\end{align*}
here, the last line on the right-hand side is new. Clearly, it holds
\begin{align*}
  \int \phi_2 (\partial_t(e^{\phi_0})\phi_1^2+e^{\phi_0}\partial_t (\phi_1^2))\mathrm{d}x
  \leq &\, C \|\phi_2\|_0(\|\phi_1\|_0\|\phi_1\|_\infty
  +\|\partial_t \phi_1\|_0\|\phi_1\|_\infty)\\
  \leq & \,C \|\phi_2\|_0 \leq  C \|U_2\|_0.
\end{align*}
The cases for the integer $s>0$ can be similarly  handled.

(ii). To deal with the term $\mathcal{G}_2=(0, \mathfrak{f}_1, \frac{1}{2\theta_0}\mathfrak{g}_1)^t$ in \eqref{linear-symmetric-system-2}, we note that $L^{-1}$ preserves decay in $v$ and 
\begin{align*}
&\{\mathbf{I}-\mathbf{P}\}\Big(\frac{F_{2}}{\sqrt{\mu}}\Big)\\
=&\,L^{-1}\bigg(-\frac{\{\partial_t+v\cdot \nabla_x\}F_1-\sum_{ {i, j\geq 0 ,i+j=1  }}\nabla_x \phi_i \cdot \nabla_v F_j
-Q(F_1, F_1)}{\sqrt{\mu}}\bigg)\\
=&\, L^{-1}\bigg(-\frac{\{\partial_t+v\cdot \nabla_x\}\big(\sqrt{\mu}\mathbf{P}\big(\frac{F_1}{\sqrt{\mu}}\big)
+\sqrt{\mu}\{\mathbf{I}-\mathbf{P}\}\big(\frac{F_1}{\sqrt{\mu}}\big)\big)
-\sum_{ {i, j\geq 0 ,i+j=1  }}\nabla_x \phi_i \cdot \nabla_v F_j
-Q(F_1, F_1)}{\sqrt{\mu}}\bigg),
\end{align*}
where the term $$L^{-1}\Big(\frac{\{\partial_t+v\cdot \nabla_x\}\sqrt{\mu}}{\sqrt{\mu}}
\{\mathbf{I}-\mathbf{P}\}\Big(\frac{F_1}{\sqrt{\mu}}\Big)\Big)$$ is dominant. 
Since 
\begin{align*}
\{\mathbf{I}-\mathbf{P}\}\Big(\frac{F_{1}}{\sqrt{\mu}}\Big)=&\,L^{-1}\Big(-\frac{\{\partial_t+v\cdot \nabla_x-\nabla_x \phi_0\cdot \nabla_v\}\mu}{\sqrt{\mu}}\Big)\\
\leq& \, C(\|\partial \rho_0\|_\infty+\|\partial u_0\|_\infty+\|\partial \theta_0\|_\infty+\|\nabla \phi_0\|_\infty)(1+|v|^3)\sqrt{\mu}\\
\leq &\, C (1+t)^{-\frac{16}{15}}(1+|v|^3)\sqrt{\mu}, \qquad (\textrm{here}\,\partial=\partial_t\,\mathrm{or}\,\partial_x)
\end{align*}
we have
$$L^{-1}\Big(\frac{\{\partial_t+v\cdot \nabla_x\}\sqrt{\mu}}{\sqrt{\mu}}
\{\mathbf{I}-\mathbf{P}\}\Big(\frac{F_1}{\sqrt{\mu}}\Big)\Big)\leq C (1+|v|^3)^2\sqrt{\mu}.$$
Hence, we get $$\Big|\{\mathbf{I}-\mathbf{P}\}\Big(\frac{F_{2}}{\sqrt{\mu}}\Big)\Big|\leq C (1+t)^0(1+|v|^3)^2\sqrt{\mu}.$$

In conclusion, similarly to \eqref{U1-s}, we obtain that
\begin{align}\label{U2-s}
  \frac{\mathrm{d}}{\mathrm{d}t}&\big(\|\sqrt{(\Lambda^s U_2)^t \mathcal{A}_0 (\Lambda^s U_2)}\|_0^2+\|\Lambda^s \nabla \phi_2\|_0^2
+\big\|e^{\frac{\phi_0}{2}}\Lambda^s  \phi_2\big\|_0^2\big)\nonumber\\
  &\leq \,\frac{C}{(1+t)^{\frac{16}{15}}}\big(\|\Lambda^s U_2\|_0^2+\|\Lambda^s V_2\|_0^2+\big\|e^{\frac{\phi_0}{2}}\Lambda^s  \phi_2\big\|_0^2\big)\nonumber\\
  &\quad\,+C(1+t)^0\big(\|\Lambda^s U_2\|_0+\|\Lambda^s V_2\|_0+\big\|e^{\frac{\phi_0}{2}}\Lambda^s  \phi_2\big\|_0\big),
\end{align}
which implies that $$\|U_2(t)\|_s+\|V_2(t)\|_s+\|\phi_2(t)\|_s\leq C(1+t)^1$$ by using Gronwall's inequality.

Next, inductively, we have
$$\Big|\{\mathbf{I}-\mathbf{P}\}\Big(\frac{F_{n+1}}{\sqrt{\mu}}\Big)\Big|\leq C (1+t)^{n-1}(1+|v|^3)^{n+1}\sqrt{\mu} \,\,\,\,\, \text{for}\,\,\,\, n\in \mathbb{N}_+.$$ 
On the other hand, from the $\e^{n+1}$ step of \eqref{Fi}, we get
\begin{align*}
\int \phi_{n+1} \partial_t \rho_{n+1} \mathrm{d}x=\,&\int \phi_{n+1} (\partial_t(e^{\phi_0})\phi_{n+1}+e^{\phi_0}\partial_t \phi_{n+1}-\Delta (\partial_t \phi_{n+1}))\mathrm{d}x\\
&+ \int \phi_{n+1} (\partial_t(e^{\phi_0})(A_{n+1}-\phi_{n+1})+e^{\phi_0}\partial_t (A_{n+1}-\phi_{n+1}))\mathrm{d}x\\
=\,&\frac{1}{2}\frac{\mathrm{d}}{\mathrm{d}t}\big(\|\nabla \phi_{n+1}\|^2_0+\big\|\phi_{n+1} e^{\frac{\phi_0}{2}}\big\|_0^2\big)+\frac{1}{2}\int \phi_{n+1}^2 \partial_t (e^{\phi_0})\mathrm{d}x\\
&+ \int \phi_{n+1} (\partial_t(e^{\phi_0})(A_{n+1}-\phi_{n+1})+e^{\phi_0}\partial_t (A_{n+1}-\phi_{n+1}))\mathrm{d}x.
\end{align*}
Notice that the term $A_{n+1}-\phi_{n+1}$ on the last line consists of the products of at least two terms among $\{\phi_1,\ldots, \phi_n\}$, for example $\phi_{n-m}\phi_m$, whose $L^2$ norm satisfies controllable temporal growth:
$$\|\phi_{n-m}\phi_m\|_0\leq C \|\phi_{n-m}\|_0\|\phi_m\|_\infty\leq C (1+t)^{n-m-1}(1+t)^{m-1}\leq C (1+t)^{n-2}\leq C (1+t)^{n-1}.$$  
It follows that
\begin{align*}
\int \phi_{n+1} \big(\partial_t(e^{\phi_0})(A_{n+1}-\phi_{n+1})+e^{\phi_0}\partial_t (A_{n+1}-\phi_{n+1})\big)\mathrm{d}x\leq C (1+t)^{n-1} \|\phi_{n+1}\|_0.
\end{align*}

Similarly to \eqref{U1-s} and \eqref{U2-s}, we deduce that
\begin{align}
  \frac{\mathrm{d}}{\mathrm{d}t}&\big(\big\|\sqrt{(\Lambda^s U_{n+1})^t \mathcal{A}_0 (\Lambda^s U_{n+1})}\big\|_0^2+\|\Lambda^s \nabla \phi_{n+1}\|_0^2
+\big\|e^{\frac{\phi_0}{2}}\Lambda^s  \phi_{n+1}\big\|_0^2\big)\nonumber\\
  &\leq\,\frac{C}{(1+t)^{\frac{16}{15}}}\big(\|\Lambda^s U_{n+1}\|_0^2+\|\Lambda^s V_{n+1}\|_0^2+\big\|e^{\frac{\phi_0}{2}}\Lambda^s  \phi_{n+1}\big\|_0^2\big)\nonumber\\
  &\quad\,+C(1+t)^{n-1}\big(\|\Lambda^s U_{n+1}\|_0+\|\Lambda^s V_{n+1}\|_0+\big\|e^{\frac{\phi_0}{2}}\Lambda^s  \phi_{n+1}\big\|_0\big),
\end{align}
which implies that $$\|U_{n+1}(t)\|_s+\|V_{n+1}(t)\|_s+\|\phi_{n+1}(t)\|_s\leq C(1+t)^n, \quad n\in \mathbb{N}_+.$$
This completes the proof of Proposition  \ref{propa}.
\end{proof}

\section{The solution $\phi_R$ to the remainder equation \eqref{vpbr}$_2$}
\label{Sec-phi_R}
First, from Proposition \ref{result-of-LiYangZhong2016}, we see that there exists a unique strong solution to the ionic Vlasov-Poisson-Boltzmann system \eqref{vpb} with $\sup_{t\geq 0}\|\phi(t)\|_{H^2(\mathbb{R}^3)}=o(1),$
where $\phi(t,x)$ satisfies the Poisson equation \eqref{vpb}$_2$: 
$\Delta \phi=e^{\phi}-\int_{\mathbb{R}^3}F\, \mathrm{d}v.$

Second, from Proposition \ref{result-of-guoyan} and Remark \ref{phi_0}, we see that $\sup_{t\geq 0}\|\phi_0(t)\|_{L^\infty(\mathbb{R}^3)}=o(1),$
where $\phi_0(t,x)$ satisfies the Poisson equation in the compressible ionic Euler-Poisson system \eqref{EPS}$_2$: 
$\Delta \phi_0=e^{\phi_0}-\rho_0.$

Furthermore, in Proposition \ref{propa}, we have proved that $\|\phi_i\|_{H^s}\leq C(1+t)^{i-1}$ for $1\leq i\leq 2k-1$ and $s\in \mathbb{N}$. For $t\leq \e^{-m}$, $0<m<\frac{1}{2}$, it holds $\e \|\phi_1\|_{L^\infty}+\e^{2k-1}\|\phi_{2k-1}\|_{L^\infty}\leq C \e \mathcal{I}_1(t,\e)=o(1)$, where $\mathcal{I}_1(t,\e)$ is defined in \eqref{I_1}.

\emph{Below we denote $\e^k\phi_R$ by $\psi_R$ for presentation simplicity}. From the  expansion \eqref{expansion2}, we have
\begin{align*}
\sup_{t\geq 0}\|\psi_R\|_{L^\infty}\leq \sup_{t\geq 0}\big\{\|\phi\|_{L^\infty}+\|\phi_0\|_{L^\infty}+\e\|\phi_1\|_{L^\infty}
+\cdots +\e^{2k-1}\|\phi_{2k-1}\|_{L^\infty}\big\}
=o(1),
\end{align*}
which implies that $\|\psi_R\|_{L^\infty}$ has a uniform upper bound independent of $\psi_R$, $\e$ and $t$.

We next establish a sharper estimate of $\psi_R$, which is necessary in Section \ref{sec3}. 

Recall that we have denoted $\exp\{\phi_0+\cdots+ \e^{2k-1}\phi_{2k-1}\}$ by $H(\e)$ and the $(2k-1)$th-order Taylor polynomial of $H(\e)$ by $T_{2k-1}(H(\e))$. 
The nonlinear Poisson equation \eqref{vpbr}$_2$ for the remainder $\psi_R$ is equivalent to
\begin{align}\label{psi_R}
  \Delta \psi_R= & (e^{\psi_R}-1)H(\e)+H(\e)-T_{2k-1}(H(\e))-\int_{\mathbb{R}^3}\e^k R\, \mathrm{d}v.
\end{align}

\begin{proposition}\label{prop1}
Suppose that $\psi_R\in H^2(\mathbb{R}^3)$ is the unique solution to the equation \eqref{psi_R}. Assuming that $\|f\|_{0}+\e^{\frac{3}{2}}\|h\|_{\infty}$ is bounded for $R = \sqrt{\mu} f =\frac{\sqrt{\mu_M}}{w(v)}h$ in \eqref{def-f-h}, we have, for sufficiently small $\e$,
\begin{align}
\|\psi_R\|_2\leq C\e^k(\e^{\frac{1}{2}}+\|f\|_0),\quad \mathrm{i.e.}, \,\, \|\phi_R\|_2\leq C(\e^{\frac{1}{2}}+\|f\|_0),\label{psiR-e}
\end{align}
where the constant $C>0$ is independent of $\e$ and $\phi_R$.
\end{proposition}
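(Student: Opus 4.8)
The plan is to treat \eqref{psi_R} as a semilinear elliptic equation and exploit the positivity of $H(\e)$ together with the sign of $e^{\psi_R}-1$. Since the uniform bound $\sup_{t\ge 0}\|\psi_R\|_{L^\infty}=o(1)$ is already in hand, one has $\psi_R$ pointwise small, hence $e^{\psi_R}-1=\psi_R+O(\psi_R^2)$ with $(e^{\psi_R}-1)/\psi_R$ bounded below by a positive constant $c_0>0$. Rewriting \eqref{psi_R} as
\begin{align*}
-\Delta\psi_R+\Big(\tfrac{e^{\psi_R}-1}{\psi_R}H(\e)\Big)\psi_R
=-\big(H(\e)-T_{2k-1}(H(\e))\big)+\int_{\mathbb{R}^3}\e^k R\,\mathrm{d}v,
\end{align*}
the zeroth-order coefficient $\tfrac{e^{\psi_R}-1}{\psi_R}H(\e)$ is bounded above and below by positive constants uniformly in $t$ and $\e$ (using $0<C_0\le e^{\phi_0}\le C_1$ and the smallness of $\e\mathcal I_1(t,\e)$ so that $H(\e)$ stays comparable to $e^{\phi_0}$). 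This turns the operator into a uniformly coercive linear elliptic operator acting on $\psi_R$, with right-hand side $g_\e:=-(H(\e)-T_{2k-1}(H(\e)))+\int_{\mathbb{R}^3}\e^k R\,\mathrm{d}v$.

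The first step is the energy (Lax–Milgram) estimate: multiply by $\psi_R$ and integrate to get
$\|\nabla\psi_R\|_0^2+c_0\|\psi_R\|_0^2\le \|g_\e\|_0\|\psi_R\|_0$, hence $\|\psi_R\|_1\le C\|g_\e\|_0$. The second step is to bound $\|g_\e\|_0$: for the velocity integral, $\big\|\int_{\mathbb{R}^3}\e^k R\,\mathrm{d}v\big\|_0=\e^k\big\|\int_{\mathbb{R}^3}\sqrt\mu f\,\mathrm{d}v\big\|_0\le C\e^k\|f\|_0$ by Cauchy–Schwarz in $v$ against $\sqrt\mu$; for the Taylor remainder term, invoke the estimate advertised in the introduction (to be proved as Proposition \ref{prop2} via the Lagrange/Faà di Bruno representation of $H(\e)-T_{2k-1}(H(\e))$ and the growth bound $\|\phi_i\|_{H^s}\le C(1+t)^{i-1}$ from Proposition \ref{propa}), namely $\|H(\e)-T_{2k-1}(H(\e))\|_{L^2}\le C\e^k\e^{1/2}$ for $t\le\e^{-m}$, $0<m<\tfrac12$. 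Together these give $\|\psi_R\|_1\le C\e^k(\e^{1/2}+\|f\|_0)$. The third step upgrades to $H^2$: since $\psi_R\in H^2$ already solves \eqref{psi_R}, write $\Delta\psi_R=(e^{\psi_R}-1)H(\e)+\big(H(\e)-T_{2k-1}(H(\e))\big)-\int_{\mathbb{R}^3}\e^k R\,\mathrm{d}v$ and estimate the right-hand side in $L^2$: $\|(e^{\psi_R}-1)H(\e)\|_0\le C\|\psi_R\|_0$ (using $\|\psi_R\|_\infty=o(1)$ and boundedness of $H(\e)$), so $\|\Delta\psi_R\|_0\le C\e^k(\e^{1/2}+\|f\|_0)$, and then elliptic regularity for the Laplacian on $\mathbb{R}^3$ gives $\|D^2\psi_R\|_0\le C\|\Delta\psi_R\|_0$, whence $\|\psi_R\|_2\le C\e^k(\e^{1/2}+\|f\|_0)$. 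Dividing by $\e^k$ (recall $\psi_R=\e^k\phi_R$) yields \eqref{psiR-e}.

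The main obstacle is the control of $\|H(\e)-T_{2k-1}(H(\e))\|_{L^2}$: naively this difference is $O(\e^{2k})$, but the coefficients $\phi_i$ grow polynomially in $t$, so on the time scale $t\le\e^{-m}$ one must track exactly how much of the $\e^{2k}$ gain survives. This requires the Lagrange remainder formula $H(\e)-T_{2k-1}(H(\e))=\tfrac{\e^{2k}}{(2k)!}H^{(2k)}(\vartheta_{t,x}\e)$, expanding $H^{(2k)}$ by Faà di Bruno into products $\prod_\ell (g^{(\ell)}(\vartheta_{t,x}\e))^{\varpi_\ell}$ with $\sum_\ell \ell\varpi_\ell=2k$, and the observation that each factor $g^{(\ell)}$ absorbs an $\e^{(\ell-1)/2}$ while releasing only $\e^{(1/2-m)(\ell-1)}$; summing the exponents against $\sum\ell\varpi_\ell=2k$ produces the net gain $\e^{k+1/2}$. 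Since this is precisely the content of the promised Proposition \ref{prop2}, here I will simply cite it. Everything else — the coercivity from the positive zeroth-order term, the Cauchy–Schwarz bound on the velocity moment, and the elliptic bootstrap — is routine, so modulo Proposition \ref{prop2} the argument is short.
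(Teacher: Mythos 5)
Your argument is correct and follows essentially the same route as the paper: use the a priori bound $\|\psi_R\|_{L^\infty}=o(1)$ (from the existence theory recalled just before the proposition) to make the zeroth-order coefficient $H(\e)(e^{\psi_R}-1)/\psi_R$ uniformly positive, get the $H^1$ estimate from coercivity, bound the source term via Proposition~\ref{prop2} and Cauchy–Schwarz in $v$, and then upgrade to $H^2$ by elliptic regularity. The only cosmetic difference is that the paper carries explicit $e^{\|\psi_R\|_\infty}$ factors and closes them by a short bootstrap showing $\|\psi_R\|_\infty=O(\e^k)$, whereas you absorb the same smallness into a fixed coercivity constant $c_0$ from the outset; both lead to a constant independent of $\psi_R$, and the paper performs the $H^2$ step via $(I-\Delta)^{-1}\in\Psi^{-2}$ rather than the Plancherel identity $\|D^2\psi_R\|_0=\|\Delta\psi_R\|_0$, which is equivalent.
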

The proof of \eqref{psiR-e} relies on the following \eqref{H-e}. 
To uniformly handle the term $H-T_{2k-1}(H)$ and its derivatives, we summary them together as the following proposition. Note that \eqref{dtH-e} and \eqref{dxH-e} are crucial in Sections \ref{sec3} and \ref{sec4}. 
\begin{proposition}\label{prop2}
For $\e$ sufficiently small, we have
  \begin{align}
   & \|H(\e)-T_{2k-1}(H(\e))\|_0\leq C\e^{k+\frac{1}{2}},\quad \|H(\e)-T_{2k-1}(H(\e))\|_\infty\leq C\e^{k+\frac{1}{2}}.\label{H-e}
\end{align}
Moreover, for the first order derivatives of $t$ and $x$ of $H(\e)-T_{2k-1}(H(\e))$, we have 
\begin{align}
 \|\partial_t (H(\e)-T_{2k-1}(H(\e)))\|_0\,& \leq C\e^{k+\frac{1}{2}},\label{dtH-e}\\
    \|\nabla_x (H(\e)-T_{2k-1}(H(\e)))\|_\infty\,& \leq C\e^{k+\frac{1}{2}}.\label{dxH-e}
\end{align}
\end{proposition}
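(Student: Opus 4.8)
The plan is to reduce all four estimates to a single power‑counting argument built on the Lagrange form of the Taylor remainder together with the Fa\`{a} di Bruno formula, fed by the temporal growth $\|\phi_j\|_{H^s}\le C(1+t)^{j-1}$ of Proposition \ref{propa}. Throughout one works on the interval $t\le\e^{-m}$ with $0<m<\tfrac12$, which is exactly where the restriction on $m$ enters. Set $g(t,x,\e):=\phi_0+\e\phi_1+\cdots+\e^{2k-1}\phi_{2k-1}$, so that $H(\e)=e^{g(\e)}$ is, for each fixed $(t,x)$, a $C^\infty$ function of $\e$ with $g^{(\ell)}(\vartheta\e)=\sum_{\ell\le j\le 2k-1}\tfrac{j!}{(j-\ell)!}(\vartheta\e)^{j-\ell}\phi_j$. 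Fa\`{a} di Bruno gives $H^{(2k)}(\vartheta\e)=H(\vartheta\e)\sum_\varpi C_\varpi\prod_{1\le\ell\le 2k-1}\bigl(g^{(\ell)}(\vartheta\e)\bigr)^{\varpi_\ell}$ with $\sum_\ell\ell\varpi_\ell=2k$, and Taylor's theorem produces, for each $(t,x)$, a number $\vartheta=\vartheta_{t,x}\in(0,1)$ with
$$H(\e)-T_{2k-1}(H(\e))=\frac{1}{(2k)!}\,H(\vartheta\e)\sum_\varpi C_\varpi\prod_{1\le\ell\le 2k-1}\bigl(\e^{\ell}g^{(\ell)}(\vartheta\e)\bigr)^{\varpi_\ell},$$
where the remainder factor $\e^{2k}=\prod_\ell(\e^{\ell})^{\varpi_\ell}$ has been absorbed into the product.

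The heart of the argument is the bound, valid for $\ell\ge 1$ and $t\le\e^{-m}$,
$$\bigl\|\e^{\ell}g^{(\ell)}(\vartheta\e)\bigr\|_{H^s}\le C\sum_{\ell\le j\le 2k-1}\e^{j}(1+t)^{j-1}\le C\,\e^{\,1+(1-m)(\ell-1)},$$
the dominant term being $j=\ell$; here we use $\|\phi_j\|_{H^s}\le C(1+t)^{j-1}$ from Proposition \ref{propa} and $(1+t)\le\e^{-m}$. The same bound gives $\|g(\vartheta\e)-\phi_0\|_\infty=O(\e)$, hence $\|H(\vartheta\e)\|_\infty\le e^{\|g(\vartheta\e)\|_\infty}\le C$ uniformly in $\e,t,x$. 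Placing one factor $\e^{\ell_0}g^{(\ell_0)}$ in $L^2$ and the remaining ones in $L^\infty$ through $H^{2}\hookrightarrow L^\infty$ (all of them in $L^\infty$ for the sup‑norm version of \eqref{H-e}), and using $\sum_\ell(\ell-1)\varpi_\ell=2k-P$ with $P:=\sum_\ell\varpi_\ell\ge 1$, every term in the sum is dominated by
$$C\,\e^{\,P+(1-m)(2k-P)}=C\,\e^{\,(1-m)2k+mP}\le C\,\e^{\,2k-m(2k-1)}\le C\,\e^{\,k+\frac12},$$
the last inequality because $2k-m(2k-1)\ge k+\tfrac12$ for $m\le\tfrac12$ and $\e\in(0,1)$. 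This proves \eqref{H-e}.

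For \eqref{dtH-e} and \eqref{dxH-e} the first step is to commute the derivative past the Taylor polynomial in $\e$. The coefficients of $T_{2k-1}(H(\e))$ are $\tfrac1{n!}\partial_\e^{n}H|_{\e=0}=e^{\phi_0}A_n$, polynomial expressions in the functions $\phi_0,\dots,\phi_n$, which are smooth in $(t,x)$; hence the mixed partials $D_tD_\e H$ and $D_xD_\e H$ are continuous, so $\partial_t T_{2k-1}(H(\e))=T_{2k-1}(\partial_t H(\e))$ and $\nabla_x T_{2k-1}(H(\e))=T_{2k-1}(\nabla_x H(\e))$. Consequently $\partial_t\bigl(H(\e)-T_{2k-1}(H(\e))\bigr)$ is precisely the $(2k-1)$th Taylor remainder of $\e\mapsto\partial_t H(\e)=H(\e)\,\partial_t g(\e)$, where $\partial_t g(\e)=\sum_{0\le j\le 2k-1}\e^{j}\partial_t\phi_j$. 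Its Lagrange form $\tfrac{\e^{2k}}{(2k)!}\partial_\e^{2k}(H\,\partial_t g)(\vartheta'\e)$, expanded by the Leibniz rule and Fa\`{a} di Bruno, is a finite sum of terms $H(\vartheta'\e)\bigl(\prod_\ell(\e^{\ell}g^{(\ell)}(\vartheta'\e))^{\varpi_\ell}\bigr)\bigl(\e^{r}\partial_\e^{r}\partial_t g(\vartheta'\e)\bigr)$ with $\sum_\ell\ell\varpi_\ell+r=2k$ and $0\le r\le 2k-1$. Using $\|\partial_t\phi_j\|_{H^s}\le C(1+t)^{j-1}$ for $j\ge 1$ and $\|\partial_t\phi_0\|_\infty\le C$ — which follow from Proposition \ref{propa} together with $\partial_t\rho_j=-\nabla\cdot(\rho_0 u_j+\rho_j u_0)$, the identity $(e^{\phi_0}-\Delta)\phi_j=\rho_j-e^{\phi_0}(A_j-\phi_j)$ and the $\Psi^{-2}$ parametrix $(e^{\phi_0}-\Delta)^{-1}$ as in \eqref{partial_t phi_0} — one gets $\|\e^{r}\partial_\e^{r}\partial_t g(\vartheta'\e)\|_{H^s}\le C\e^{\,1+(1-m)(r-1)}$ for $r\ge 1$, while for $r=0$ the factor $\e^{2k}\partial_\e^{2k}H$ is already $O(\e^{k+\frac12})$ by the argument for \eqref{H-e} and $\|\partial_t g(\vartheta'\e)\|_\infty\le C$. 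Multiplying with the product bound of the previous paragraph, the terms with $r\ge1$ are $\le C\e^{\,2k-2m(k-1)}$ and the one with $r=0$ is $\le C\e^{\,2k-m(2k-1)}$, both exponents being $\ge k+\tfrac12$ for $m\le\tfrac12$; hence $\|\partial_t(H(\e)-T_{2k-1}(H(\e)))\|_0\le C\e^{k+\frac12}$. The estimate \eqref{dxH-e} is obtained verbatim with $\partial_t$ replaced by $\nabla_x$, using $\|\nabla_x\phi_j\|_{H^s}\le C\|\phi_j\|_{H^{s+1}}\le C(1+t)^{j-1}$ ($j\ge 1$) and $\|\nabla_x\phi_0\|_{W^{s,\infty}}\le C$ and placing all factors in $L^\infty$; the non‑decaying term $\nabla_x\phi_0$ appears only in the $r=0$ contribution, which is controlled exactly as above.

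The main obstacle is the combinatorial bookkeeping: one must verify that among all partitions $\varpi$ in the Fa\`{a} di Bruno sum the worst case still produces an exponent $\ge k+\tfrac12$, and that products of up to $2k$ factors are controlled by distributing them over one $L^2$ slot and several $L^\infty$ slots, for which the high‑$s$ bounds of Proposition \ref{propa} and Sobolev embedding suffice. A subtler point is the rigorous justification that $T_{2k-1}$ commutes with $\partial_t$ and $\nabla_x$, which rests on the continuity of the mixed partials of $H$ in $(t,x,\e)$ and uses essentially the smoothness and the polynomial‑in‑time regularity of the coefficients $\phi_j$ established in Section \ref{sec2}. Finally, the strict inequality $m<\tfrac12$ is exactly what upgrades the raw bounds $\e^{2k-m(2k-1)}$ and $\e^{2k-2m(k-1)}$ to the claimed $\e^{k+\frac12}$.
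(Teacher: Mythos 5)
Your proof is correct and follows essentially the same strategy as the paper: Lagrange form of the Taylor remainder, Faà di Bruno expansion, power counting driven by $\|\phi_j\|_{H^s}\le C(1+t)^{j-1}$ on the interval $t\le\e^{-m}$ with $m<\tfrac12$, and commutation of $T_{2k-1}$ with $\partial_t$ and $\nabla_x$ via continuity of the mixed partials of $H$ (which in turn rests on the parametrix bounds for $\partial_t\phi_j$). The only cosmetic differences are that you absorb the outer $\e^{2k}$ into the factors $\e^\ell g^{(\ell)}$ and $\e^r\partial_\e^r\partial_t g$ before estimating, and you organize the derivative estimate via a Leibniz expansion of $\partial_\e^{2k}(H\,\partial_t g)$ rather than the paper's $\partial_t$ applied to the Faà di Bruno formula for $H^{(2k)}$; these give the same power count.
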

\begin{proof}[Proof of Proposition \ref{prop1}]
Once we establish \eqref{H-e} in Proposition \ref{prop2}, we readily conclude Proposition \ref{prop1}. In fact, by 
using the mean value theorem, for any $x\in \mathbb{R}^3$, the equality $$e^{\psi_R}-1=e^{\psi_\xi}\psi_R$$ holds for a function $\psi_\xi\in L^\infty(\mathbb{R}^3)$ due to the fact that $\|\psi_R\|_2$ has a bound independent of $\psi_R$. From the above equality, one easily gets
\begin{align*}
\|\psi_R\|_1\leq &\,C e^{\|\psi_R\|_\infty}\Big(\|H(\e)-T_{2k-1}(H(\e))\|_0+\Big\|\int_{\mathbb{R}^3}\e^k R\, \mathrm{d}v\Big\|_0\Big)\\
\leq &\,Ce^{\|\psi_R\|_\infty}\e^k(\e^{\frac{1}{2}}+\|f\|_0).
\end{align*}
Moreover, because $\psi_R$ satisfies 
$$-(I-\Delta)\psi_R=H(\e) e^{\psi_\xi}\psi_R-\psi_R+H(\e)-T_{2k-1}(H(\e))-\int_{\mathbb{R}^3}\e^k R\, \mathrm{d}v,$$
it holds
$$\psi_R=-(I-\Delta)^{-1}\Big(H(\e) e^{\psi_\xi}\psi_R-\psi_R+H(\e)-T_{2k-1}(H(\e))-\int_{\mathbb{R}^3}\e^k R\, \mathrm{d}v\Big),$$
and there exists a constant $C>0$ such that
\begin{align*}
  \|\psi_R\|_2\leq &\,C \Big\|H(\e) e^{\psi_\xi}\psi_R-\psi_R+H(\e)-T_{2k-1}(H(\e))-\int_{\mathbb{R}^3}\e^k R\, \mathrm{d}v\Big\|_0\\
  \leq &\,Ce^{\|\psi_R\|_\infty}\|\psi_R\|_0+C\e^k\e^{\frac{1}{2}}+\e^k \|f\|_0 \\
  \leq &\,Ce^{2\|\psi_R\|_\infty}\e^k\big(\e^{\frac{1}{2}}+\|f\|_0\big),
\end{align*}
which implies that $$\|\psi_R\|_\infty\leq Ce^{2\|\psi_R\|_\infty}\e^k\big(\e^{\frac{1}{2}}+\|f\|_0\big).$$
By the boundedness of $\|f\|_0$ and $\|\psi_R\|_{\infty}=o(1)$, when $\e$ is sufficiently small, it holds $\|\psi_R\|_\infty=O(\e^k)$. Moreover, there exists a constant $C>0$ independent of $\psi_R$ such that
$$\|\psi_R\|_2\leq C\e^k\big(\e^{\frac{1}{2}}+\|f\|_0\big).$$
Thus Proposition \ref{prop1} is proved.
\end{proof}
\smallskip
Next, we verify Proposition \ref{prop2}.
\begin{proof}[Proof of Proposition \ref{prop2}]

\emph{Step 1. Proof of \eqref{H-e}}.
Note that for any fixed $(t,x)\in [0,+\infty)\times \mathbb{R}^3$,
\begin{align*}
H(\e)-T_{2k-1}(H(\e))=\frac{H^{(2k)}(\vartheta_{t,x} \e)}{(2k)!}\e^{2k}, \quad 0<\vartheta_{t,x} <1.
\end{align*}
To estimate $H^{(2k)}(\vartheta_{t,x} \e)$, we denote $$g(\e):=\phi_0+\cdots +\e^{2k-1}\phi_{2k-1}$$
 and observe that
\begin{align}\label{simple-Faadi}
H^{(1)}(\e)=&\,g^{(1)}(\e)H(\e),\nonumber\\
H^{(2)}(\e)=&\,g^{(2)}(\e)H(\e)+g^{(1)}(\e)g^{(1)}(\e) H(\e),\nonumber\\
H^{(3)}(\e)=&\,g^{(3)}(\e)H(\e)+3g^{(2)}(\e)g^{(1)}(\e) H(\e)+g^{(1)}(\e)g^{(1)}(\e) g^{(1)}(\e)H(\e),\nonumber\\
\cdots &\nonumber\\
H^{(2k)}(\e)=&\, H(\e) \sum_\varpi C_\varpi \prod_{1\leq \ell \leq 2k}(g^{(\ell)}(\e))^{\varpi_\ell}= H(\e) \sum_\varpi C_\varpi \prod_{1\leq \ell \leq 2k-1}(g^{(\ell)}(\e))^{\varpi_\ell},
\end{align}
where the coefficients $C_\varpi$ are nonnegative integers, and the sum is taken over those $\varpi$ such that $\varpi_\ell\in \mathbb{N}_+$, $\sum_{1\leq \ell\leq 2k-1}\ell \varpi_\ell=2k$.
Note that \eqref{simple-Faadi} is a simple version of celebrated Fa\`{a} di Bruno's formula. 
It is clear that $g^{(2k)}(\e)=0$ and 
\begin{align*}
g^{(\ell)}(\e)=\ell!\phi_\ell+(\ell+1)!\e^1\phi_{\ell+1}+\cdots + \frac{(2k-1)!}{(2k-\ell-1)!}\e^{2k-\ell-1}\phi_{2k-1},\quad 1\leq \ell\leq 2k-1.
\end{align*}
For $t\leq \e^{-m}$, $0<m<\frac{1}{2}$ and $1\leq \ell\leq 2k-1$, we have 
\begin{align*}
\|g^{(\ell)}(\vartheta_{t,x}\e)\|_0   \leq &\, \ell!\|\phi_\ell\|_0+(\ell+1)!
\|(\vartheta_{t,x}\e)^1\phi_{\ell+1}\|_0
+\cdots + \frac{(2k-1)!}{(2k-\ell-1)!}
\|(\vartheta_{t,x}\e)^{2k-\ell-1}\phi_{2k-1}\|_0\\
\leq &\, \ell!\|\phi_\ell\|_0+(\ell+1)!\e^1\|\phi_{\ell+1}\|_0
+\cdots + \frac{(2k-1)!}{(2k-\ell-1)!}
\e^{2k-\ell-1}\|\phi_{2k-1}\|_0\\
\leq &\,
C_\ell ((1+t)^{\ell-1}+\e^1 (1+t)^\ell+\cdots + \e^{2k-\ell-1}(1+t)^{2k-2})\\
\leq &\,
C_\ell ((\e^{-m})^{\ell-1}+\e^1 (\e^{-m})^\ell+\cdots + \e^{2k-\ell-1}(\e^{-m})^{2k-2}),
\end{align*}
and 
\begin{align*}
\|g^{(\ell)}(\vartheta_{t,x}\e)\|_\infty   \leq & \, \ell!\|\phi_\ell\|_\infty
+(\ell+1)!\|(\vartheta_{t,x}\e)^1\phi_{\ell+1}\|_\infty
+\cdots + \frac{(2k-1)!}{(2k-\ell-1)!}
\|(\vartheta_{t,x}\e)^{2k-\ell-1}\phi_{2k-1}\|_\infty\\
\leq & \, \ell!\|\phi_\ell\|_\infty+(\ell+1)!\e^1\|\phi_{\ell+1}\|_\infty
+\cdots + \frac{(2k-1)!}{(2k-\ell-1)!}
\e^{2k-\ell-1}\|\phi_{2k-1}\|_\infty\\
\leq &\,
C_\ell ((1+t)^{\ell-1}+\e^1 (1+t)^\ell+\cdots + \e^{2k-\ell-1}(1+t)^{2k-2})\\
\leq &\,
C_\ell ((\e^{-m})^{\ell-1}+\e^1 (\e^{-m})^\ell+\cdots + \e^{2k-\ell-1}(\e^{-m})^{2k-2}).
\end{align*}
Thus, 
\begin{align}\label{g-ell-L2}
(\e^{\frac{1}{2}})^{\ell-1}\|g^{(\ell)}(\vartheta_{t,x}\e)\|_0  
\leq &\,
C_\ell(\e^{\frac{1}{2}})^{\ell-1} ((\e^{-m})^{\ell-1}+\e^1 (\e^{-m})^\ell+\cdots + \e^{2k-\ell-1}(\e^{-m})^{2k-2})\nonumber\\
= &\,
C_\ell ((\e^{\frac{1}{2}-m})^{\ell-1}
+\e^{\frac{1}{2}}(\e^{\frac{1}{2}-m})^{\ell}
+\cdots 
+\e^{\frac{2k-\ell-1}{2}}(\e^{\frac{1}{2}-m})^{2k-2})\nonumber\\
\leq &\,2 C_\ell (\e^{\frac{1}{2}-m})^{\ell-1},
\end{align}
and 
\begin{align}\label{g-ell-Linfty}
(\e^{\frac{1}{2}})^{\ell-1}\|g^{(\ell)}(\vartheta_{t,x}\e)\|_\infty  
\leq 2 C_\ell (\e^{\frac{1}{2}-m})^{\ell-1}.
\end{align}
Hence we get a crucial estimate:
\begin{align}\label{product-estimate}
  &\Big\|\e^{2k}\prod_{1\leq \ell \leq 2k-1}(g^{(\ell)}(\vartheta_{t,x} \e))^{\varpi_\ell}\Big\|_0\nonumber\\
  \leq & \, C(\e^{\frac{1}{2}})^{2k}(\e^{\frac{1}{2}})^{2k-\sum_{1\leq \ell\leq 2k-1}(\ell-1) \varpi_\ell}
  (\e^{\frac{1}{2}-m})^{\sum_{1\leq \ell\leq 2k-1}(\ell-1) \varpi_\ell}\nonumber\\
  \leq & \,C(\e^{\frac{1}{2}})^{2k}(\e^{\frac{1}{2}})^{\sum_{1\leq \ell\leq 2k-1} \varpi_\ell}\nonumber\\
  \leq & \,C(\e^{\frac{1}{2}})^{2k}(\e^{\frac{1}{2}})^1= C \e^k \e^{\frac{1}{2}}.
\end{align}
The second inequality above is due to the fact that $\sum_{1\leq \ell\leq 2k}\ell \varpi_\ell=2k$ and $\varpi_\ell\in \mathbb{N}_+$.
Because $\|H(\vartheta_{t,x} \e)\|_\infty<\infty$ for $t\leq \e^{-m}$,  we obtain
\begin{align*}
\|H(\e)-T_{2k-1}(H(\e))\|_0= &\, \Big\|\frac{H^{(2k)}(\vartheta_{t,x} \e)}{(2k)!}\e^{2k}\Big
\|_0\\
= &\, \Big\| H(\vartheta_{t,x} \e) \e^{2k}\frac{1}{(2k)!}\sum_\varpi C_\varpi \prod_{1\leq \ell \leq 2k-1}(g^{(\ell)}(\vartheta_{t,x} \e))^{\varpi_\ell}\Big\|_0\\
\leq   &\,C\Big\|\e^{2k}\prod_{1\leq \ell \leq 2k-1}(g^{(\ell)}(\vartheta_{t,x} \e))^{\varpi_\ell}\Big\|_0\\
\leq   &\,   C \e^k \e^{\frac{1}{2}}.
\end{align*}
Similarly, we have
\begin{align*}
\|H(\e)-T_{2k-1}(H(\e))\|_\infty\leq C\e^{k+\frac{1}{2}}.
\end{align*}
Thus \eqref{H-e} is proved.

\emph{Step 2. Proof of \eqref{dtH-e}}.
We claim that $$\partial_t (T_{2k-1}(H)(\e))= T_{2k-1}(\partial_t H)(\e).$$
Indeed, it directly follows from the equality of mixed partial derivatives. 
It suffices to verify the
continuity of $\partial_t H(\e)$ (thus the
continuity of $\partial_{t} \partial_\e H(\e)$) for each $t>0$. Since $$\partial_t H(\e)=H(\e)\partial_t g(\e)=(\partial_t \phi_0+\cdots +\e^{2k-1}\partial_t \phi_{2k-1})H(\e),$$
 we shall prove that $\partial_{tt}\phi_i$ is bounded in each time interval. To this end, 
we first prove that $$\|\partial_t \phi_i\|_s\leq C (1+t)^{i-1}$$ for $1\leq i\leq 2k-1$ and
 $s\in \mathbb{N}_+$. 

From the $\e^0$ step of \eqref{Fi}: $\Delta \phi_0=e^{\phi_0}-\rho_0$, we get
$$\Delta (\partial_t\phi_0)=\partial_t \phi_0 e^{\phi_0}-\partial_t \rho_0.$$
Thus, it holds
\begin{align*}
  \partial_t\phi_0=&\,(e^{\phi_0}-\Delta)^{-1}(\partial_t \rho_0)
  =\, -(e^{\phi_0}-\Delta)^{-1}(\nabla\cdot (\rho_0u_0)).
\end{align*}
Due to the facts that $\|\nabla\cdot (\rho_0u_0)\|_s\leq C$ and $(e^{\phi_0}-\Delta)^{-1}\in \Psi^{-2}$, we deduce that
$\|\partial_t\phi_0\|_{s+2}\leq C$.

From the $\e^1$ step of \eqref{Fi}: $\Delta \phi_1=e^{\phi_0}\phi_1-\rho_1$, we get
$$\Delta (\partial_t\phi_1)=\partial_t \phi_0 e^{\phi_0}\phi_1+e^{\phi_0}\partial_t \phi_1-\partial_t \rho_1.$$
Thus, it holds
\begin{align*}
  \partial_t\phi_1=&\,(e^{\phi_0}-\Delta)^{-1}(\partial_t \rho_1-\partial_t \phi_0 e^{\phi_0}\phi_1)\\
  =&\, -(e^{\phi_0}-\Delta)^{-1}(\nabla\cdot (\rho_1u_0+\rho_0u_1)
  +\partial_t \phi_0 e^{\phi_0}\phi_1).
\end{align*}
Since the multiplication by $e^{\phi_0}\in C^\infty_b(\mathbb{R}^3)$ is a continuous map from $H^s(\mathbb{R}^3)$ into itself, $\|\partial_t \phi_0\|_s+\|\phi_1\|_s\leq C$, and $H^s(\mathbb{R}^3)$ is a Banach algebra for $s>\frac{3}{2}$, we have
$$\|\nabla\cdot (\rho_1u_0+\rho_0u_1)
  +\partial_t \phi_0 e^{\phi_0}\phi_1\|_s\leq C.$$
By $(e^{\phi_0}-\Delta)^{-1}\in \Psi^{-2}$, we deduce that
$\|\partial_t\phi_1\|_{s+2}\leq C$.

Inductively, assume that $$\|\partial_t \phi_j\|_s\leq C (1+t)^{j-1}\quad \mathrm{for} \,\,\,\,1\leq j\leq i.$$
 For $j=i+1$, we obtain from the $\e^j$ step of \eqref{Fi} that
\begin{align*}
  \partial_t \phi_{i+1}=-(e^{\phi_0}-\Delta)^{-1}(\nabla\cdot (\rho_{i+1}u_0+\rho_0u_{i+1})+e^{\phi_0}\partial_t (A_{i+1}-\phi_{i+1})+\partial_t \phi_0 e^{\phi_0}A_{i+1}),
\end{align*}
where $$A_{i+1}-\phi_{i+1}=\sum_\varkappa C_\varkappa \prod_{1\leq j\leq i}(\phi_j)^{\varkappa_j}$$ with $\sum_{1\leq j\leq i} j \varkappa_j=i+1$ and $C_\varkappa>0$.
Because it holds $\| \phi_j\|_s\leq C (1+t)^{j-1}$ for $1\leq j\leq 2k-1$, we have
$$\|\nabla\cdot (\rho_{i+1}u_0+\rho_0u_{i+1})+e^{\phi_0}\partial_t (A_{i+1}-\phi_{i+1})+\partial_t \phi_0 e^{\phi_0}A_{i+1}\|_s\leq C (1+t)^i.$$
Hence we deduce
$\|\partial_t\phi_{i+1}\|_{s+2}\leq C(1+t)^i$ by $(e^{\phi_0}-\Delta)^{-1}\in \Psi^{-2}$. 
Therefore, we conclude that $$\|\partial_t \phi_i\|_s\leq C (1+t)^{i-1}\quad \mathrm{for}\,\,\,1\leq i\leq 2k-1\,\,\, \mathrm{and}\,\,\,s\in \mathbb{N}_+.$$

Next, we investigate $\partial_{tt}\phi_i$ for $0\leq i\leq 2k-1$.
From the $\e^0$ step of \eqref{Fi}, we get
\begin{align*}
-(e^{\phi_0}-\Delta)\partial_{tt}\phi_0
=&\,(\partial_{t}\phi_0)^2 e^{\phi_0}-\partial_{tt}\rho_0\\
=&\,(\partial_{t}\phi_0)^2 e^{\phi_0}+\partial_t \nabla_x \cdot (\rho_0 u_0)\\
=&\,(\partial_{t}\phi_0)^2 e^{\phi_0}+\partial_t u_0\cdot \nabla\rho_0+u_0\cdot \nabla \partial_t \rho_0+\partial_t \rho_0\nabla\cdot u_0+\rho_0\nabla\cdot \partial_t u_0\\
=&\,(\partial_{t}\phi_0)^2 e^{\phi_0}-((u_0\cdot \nabla)u_0+\frac{1}{\rho_0}\nabla (\rho_0\theta_0)+\nabla \phi_0)
\cdot \nabla\rho_0\\
&-(u_0\cdot \nabla) \nabla\cdot (\rho_0 u_0)-\nabla\cdot (\rho_0 u_0)\nabla\cdot u_0\\
&-\rho_0\nabla\cdot ((u_0\cdot \nabla)u_0+\frac{1}{\rho_0}\nabla (\rho_0\theta_0)+\nabla \phi_0).
\end{align*}
By elliptic estimates, $\sup_{0\leq t \leq \eta}\|\partial_{tt}\phi_0\|_s$ is bounded for any $\eta>0$, $s\geq 2$.
From the $\e^1$ step of \eqref{Fi}, we get
\begin{align*}
-(e^{\phi_0}-\Delta)\partial_{tt}\phi_1
=&\,((\partial_{t}\phi_0)^2 \phi_1+\partial_{tt}\phi_0\phi_1+2\partial_t \phi_0\partial_t \phi_1) e^{\phi_0}-\partial_{tt}\rho_1\\
=&\, ((\partial_{t}\phi_0)^2 \phi_1+\partial_{tt}\phi_0\phi_1+2\partial_t \phi_0\partial_t \phi_1) e^{\phi_0}+\nabla_x \cdot \partial_t(\rho_0u_1+\rho_1u_0)\\
=&\, ((\partial_{t}\phi_0)^2 \phi_1+\partial_{tt}\phi_0\phi_1+2\partial_t \phi_0\partial_t \phi_1) e^{\phi_0}\\
&\,+\partial_t u_1\cdot \nabla\rho_0+u_1\cdot \nabla \partial_t \rho_0+\partial_t \rho_1\nabla\cdot u_0+\rho_1\nabla\cdot \partial_t u_0,
\end{align*}
where \eqref{mass} and \eqref{moment} substitute $\partial_t \rho_1$ and $\partial_t u_1$ on the right-hand side above into $x$-derivatives of $\rho_1,u_1,\rho_0$ and $u_0$.
By elliptic estimates, $\sup_{0\leq t \leq \eta}\|\partial_{tt}\phi_0\|_s$ is bounded for any $\eta>0$, $s\geq 2$. 

Similarly, we derive that $\partial_t\phi_i\in W^{1,\infty}([0,\eta])$ for any $\eta>0$, which implies that $\partial_t\phi_i$ $(0\leq i\leq 2k-1)$ is continuous in $t$.
Hence we conclude that $\partial_t (T_{2k-1}(H(\e)))= T_{2k-1}(\partial_t H)(\e)$.

Note that
for any fixed $(t,x)\in [0,+\infty)\times\mathbb{R}^3$,
\begin{align*}
\partial_t (H(\e)-T_{2k-1}(H(\e)))=&\, \partial_t H(\e) -T_{2k-1}(\partial_t H)(\e)\\
=&\,\frac{ (\partial_t H)^{(2k)}(\vartheta'_{t,x} \e)}{(2k)!}\e^{2k}, \quad 0<\vartheta'_{t,x} <1.
\end{align*}
According to \eqref{simple-Faadi}, it holds
\begin{align*}
(\partial_t H)^{(2k)}(\e)=\partial_t(H^{(2k)}(\e))= \partial_t H (\e)\sum_\varpi C_\varpi \prod_{1\leq \ell \leq 2k-1}(g^{(\ell)}(\e))^{\varpi_\ell}+ H (\e)\sum_\varpi C_\varpi \partial_t\Big(\prod_{1\leq \ell \leq 2k-1} (g^{(\ell)}(\e))^{\varpi_\ell}\Big).
\end{align*}
We now handle $\|\partial_t H(\e)\|_\infty$ and $\|\partial_t g^{(\ell)}(\e)\|_0$.
Note that
\begin{align}
\partial_t H(\e)=&\, (\partial_t \phi_0+\cdots +\e^{2k-1}\partial_t\phi_{2k-1})\exp\{\phi_0+\cdots +\e^{2k-1}\phi_{2k-1}\},\label{dtH-biaodashi}\\
 \partial_t g^{(\ell)}(\e)=& \, \ell!\partial_t\phi_\ell+(\ell+1)!\e^1\partial_t\phi_{\ell+1}
 +\cdots + \frac{(2k-1)!}{(2k-\ell-1)!}\e^{2k-\ell-1}\partial_t\phi_{2k-1},\quad 1\leq \ell\leq 2k-1.\label{dtg-biaodashi}
\end{align}

From \eqref{partial_t phi_0}, \eqref{dtH-biaodashi} and \eqref{dtg-biaodashi}, we obtain, for $t\leq \e^{-m}$, $0<m<\frac{1}{2}$, that
\begin{align}\label{dtH-Linfty}
\|\partial_t H(\e)\|_\infty\leq &\, (\|\partial_t \phi_0\|_\infty+\cdots +\e^{2k-1}\|\partial_t\phi_{2k-1}\|_\infty)\exp\{\|\phi_0\|_\infty+\cdots +\e^{2k-1}\|\phi_{2k-1}\|_\infty\},\nonumber\\
\leq &\, C\Big(\frac{1}{(1+t)^{\frac{16}{15}}} +\e \mathcal{I}_1(t,\e)\Big)e^{C +C\e \mathcal{I}_1(t,\e)}\leq C,
\end{align}
and for $1\leq \ell\leq 2k-1$,
\begin{align*}
 \|\partial_t g^{(\ell)}(\e)\|_0\leq & \, \ell!\|\partial_t\phi_\ell\|_0+(\ell+1)!\e^1
 \|\partial_t\phi_{\ell+1}\|_0
 +\cdots + \frac{(2k-1)!}{(2k-\ell-1)!}\e^{2k-\ell-1}
 \|\partial_t\phi_{2k-1}\|_0\\
 \leq & \,C_\ell ((1+t)^{\ell-1}+\e^1 (1+t)^\ell+\cdots + \e^{2k-\ell-1}(1+t)^{2k-2})\\
\leq &\,
C_\ell ((\e^{-m})^{\ell-1}+\e^1 (\e^{-m})^\ell+\cdots + \e^{2k-\ell-1}(\e^{-m})^{2k-2}),
\end{align*}
which implies that
\begin{align*}
(\e^{\frac{1}{2}})^{\ell-1}\|\partial_t g^{(\ell)}(\vartheta'_{t,x}\e)\|_0  
\leq &\,
C_\ell(\e^{\frac{1}{2}})^{\ell-1} ((\e^{-m})^{\ell-1}+\e^1 (\e^{-m})^\ell+\cdots + \e^{2k-\ell-1}(\e^{-m})^{2k-2})\\
= &\,
C_\ell ((\e^{\frac{1}{2}-m})^{\ell-1}
+\e^{\frac{1}{2}}(\e^{\frac{1}{2}-m})^{\ell}
+\cdots 
+\e^{\frac{2k-\ell-1}{2}}(\e^{\frac{1}{2}-m})^{2k-2})\\
\leq &\,2 C_\ell (\e^{\frac{1}{2}-m})^{\ell-1}.
\end{align*}
Combining the above inequality with the estimates \eqref{g-ell-L2} and \eqref{g-ell-Linfty}, we get, similarly to \eqref{product-estimate}, that
\begin{align*}
&\|\partial_t (H(\e)-T_{2k-1}(H(\e)))\|_0\\
\leq  &\, \Big\| \partial_t H(\vartheta'_{t,x} \e) \e^{2k}\frac{1}{(2k)!}\sum_\varpi C_\varpi \prod_{1\leq \ell \leq 2k-1}(g^{(\ell)}(\vartheta'_{t,x} \e))^{\varpi_\ell}\Big\|_0\\
&+ \Big\|  H(\vartheta'_{t,x} \e) \e^{2k}\frac{1}{(2k)!}\sum_\varpi C_\varpi \partial_t \Big(\prod_{1\leq \ell \leq 2k-1}(g^{(\ell)})^{\varpi_\ell}\Big)(\vartheta'_{t,x} \e)\Big\|_0\\
\leq   &\,C\Big\|\e^{2k}\prod_{1\leq \ell \leq 2k-1}(g^{(\ell)}(\vartheta'_{t,x} \e))^{\varpi_\ell}\Big\|_0+C\Big\|\e^{2k}\partial_t\Big(\prod_{1\leq \ell \leq 2k-1}(g^{(\ell)})^{\varpi_\ell}\Big)(\vartheta'_{t,x} \e)\Big\|_0\\
\leq   & \,  C \e^k \e^{\frac{1}{2}}.
\end{align*}
Thus \eqref{dtH-e} is proved.

\emph{Step 3. Proof of \eqref{dxH-e}}.
Since $\nabla_x H(\e)$ is continuous in $x$, we commute the mixed partial derivatives of $x$ and $\e$ to get
\begin{align*}
 \nabla_x (H(\e)-T_{2k-1}(H(\e)))=&\,\nabla_x H(\e)-T_{2k-1}(\nabla_x H)(\e)\\
 =&\,\frac{ (\nabla_x H)^{(2k)}(\vartheta''_{t,x} \e)}{(2k)!}\e^{2k}, \quad 0<\vartheta''_{t,x} <1,
\end{align*}
for any fixed $(t,x)\in [0,+\infty)\times\mathbb{R}^3$.
Here,
\begin{align*}
(\nabla_x H)^{(2k)}(\e)=\nabla_x(H^{(2k)}(\e))= \nabla_x H (\e)\sum_\varpi C_\varpi \prod_{1\leq \ell \leq 2k-1}(g^{(\ell)}(\e))^{\varpi_\ell}+ H(\e) \sum_\varpi C_\varpi \nabla_x\Big(\prod_{1\leq \ell \leq 2k-1} (g^{(\ell)}(\e))^{\varpi_\ell}\Big).
\end{align*}
Similar to the deduction of \eqref{dtH-e}, we get
\begin{align*}
&\|\nabla_x (H(\e)-T_{2k-1}(H(\e)))\|_\infty\\
\leq  & \,\Big\| \nabla_x H(\vartheta''_{t,x} \e) \e^{2k}\frac{1}{(2k)!}\sum_\varpi C_\varpi \prod_{1\leq \ell \leq 2k-1}(g^{(\ell)}(\vartheta''_{t,x} \e))^{\varpi_\ell}\Big\|_0\\
&+ \Big\|  H(\vartheta''_{t,x} \e) \e^{2k}\frac{1}{(2k)!}\sum_\varpi C_\varpi \nabla_x\Big(\prod_{1\leq \ell \leq 2k-1}(g^{(\ell)}(\vartheta''_{t,x} \e))^{\varpi_\ell}\Big)\Big\|_0\\
\leq   &\,C\Big\|\e^{2k}\prod_{1\leq \ell \leq 2k-1}(g^{(\ell)}(\vartheta''_{t,x} \e))^{\varpi_\ell}\Big\|_0+C\Big\|\e^{2k}\nabla_x\Big(\prod_{1\leq \ell \leq 2k-1}(g^{(\ell)}(\vartheta''_{t,x} \e))^{\varpi_\ell}\Big)\Big\|_0\\
\leq   & \,  C \e^k \e^{\frac{1}{2}}.
\end{align*}
Thus \eqref{dxH-e} follows.
This completes the proof of Proposition \ref{prop2}.
\end{proof}

\smallskip
\section{$L^2$ Estimates for the Remainders $R$ and $\phi_R$}\label{sec3}
This section is devoted to performing the $L^2$ estimates of the remainders $f=\frac{R}{\sqrt{\mu}}$ and $\phi_R$.
From \eqref{vpbr}, $f$ satisfies
\begin{align}
&\{\partial_t+v\cdot \nabla_x-\nabla_x\phi_0 \cdot \nabla_v\}f+\frac{1}{\varepsilon}Lf\nonumber\\
 &\quad =
-\frac{v-u_0}{\theta_0}\sqrt{\mu}\cdot \nabla_x \phi_R -
\frac{\{\partial_t+v\cdot \nabla_x-\nabla_x\phi_0 \cdot \nabla_v\}\sqrt{\mu}}{\sqrt{\mu}}f\nonumber\\
&\qquad +\varepsilon^{k-1}\Gamma(f,f)+\sum_{i=1}^{2k-1}\varepsilon^{i-1}\Big\{\Gamma\Big(\frac{F_i}
{\sqrt{\mu}},f\Big)+\Gamma\Big(f, \frac{F_i}{\sqrt{\mu}}\Big)\Big\}\nonumber\\
&\qquad +\varepsilon^k \nabla_x \phi_R \cdot \nabla_v f-\e^k\nabla_x \phi_R \cdot \frac{v-u_0}{2\theta}f+\sum_{i=1}^{2k-1}\e^i \frac{\nabla_v F_i}{\sqrt{\mu}}\cdot \nabla_x \phi_R\nonumber\\
&\qquad -\sum_{i=1}^{2k-1}\e^i \nabla_x \phi_i \cdot \Big\{\frac{v-u_0}{2\theta_0}f-\nabla_v f\Big\}
+\varepsilon^{k-1} \frac{A}{\sqrt{\mu}}.\label{eqn4.1}
\end{align}
Take the $L^2$ inner product with $\theta_0 f$ on both sides of \eqref{eqn4.1} to get
\begin{align}\label{L2}
&\frac{1}{2}\frac{\mathrm{d}}{\mathrm{d}t}\|\sqrt{\theta_0}f\|_0^2+\frac{1}{\e}\langle Lf, \theta_0 f\rangle+\iint_{\mathbb{R}^3\times \mathbb{R}^3} v\sqrt{\mu}f \cdot \nabla_x \phi_R \mathrm{d}x\mathrm{d}v\nonumber\\
&\quad   =\iint_{\mathbb{R}^3\times \mathbb{R}^3} u_0 \sqrt{\mu}f \cdot \nabla_x \phi_R \mathrm{d}x\mathrm{d}v+\varepsilon^{k-1}\langle \Gamma(f,f), \theta_0f\rangle \nonumber\\
&\qquad+\Big\langle \frac{1}{2}\{\partial_t+v \cdot \nabla_x\}\theta_0 f-\theta_0 \frac{\{\partial_t+v\cdot \nabla_x-\nabla_x\phi_0 \cdot \nabla_v\}\sqrt{\mu}}{\sqrt{\mu}}f, f\Big\rangle \nonumber\\
&\qquad +\Big\langle \theta_0\sum_{i=1}^{2k-1}\varepsilon^{i-1}\Big\{\Gamma\Big(\frac{F_i}
{\sqrt{\mu}},f\Big)+\Gamma\Big(f, \frac{F_i}{\sqrt{\mu}}\Big)\Big\}, f\Big\rangle \nonumber\\
&\qquad -\e^k \Big\langle\nabla_x \phi_R \cdot \frac{v-u_0}{2}f, f\Big\rangle+\Big\langle \theta_0\sum_{i=1}^{2k-1}\e^i \frac{\nabla_v F_i}{\sqrt{\mu}}\cdot \nabla_x \phi_R, f\Big\rangle\nonumber\\
&\qquad -\Big\langle\sum_{i=1}^{2k-1}\e^i \nabla_x \phi_i \cdot \frac{v-u_0}{2}f, f\Big\rangle
+\varepsilon^{k-1} \Big\langle\frac{A}{\sqrt{\mu}}, \theta_0f\Big\rangle.
\end{align}

We recall the time-dependent functions $\mathcal{I}_1(t,\e)$ and $\mathcal{I}_2(t,\e)$ as:
\begin{align}
&\mathcal{I}_1(t,\e)=\sum_{i=1}^{2k-1}[\e(1+t)]^{i-1}
+\Big(\sum_{i=1}^{2k-1}[\e(1+t)]^{i-1}\Big)^2,\label{I1}\\
&\mathcal{I}_2(t,\e)=\sum_{2k\leq i+j\leq 4k-2}\e^{i+j-2k}(1+t)^{i+j-2}.\label{I2}
\end{align}

To deal with the term $\iint_{\mathbb{R}^3\times \mathbb{R}^3} v\sqrt{\mu}f \cdot \nabla_x \phi_R \mathrm{d}x\mathrm{d}v$ on the left-hand side of \eqref{L2}, we recall the equation of $\e^k\phi_R=\psi_R$ \eqref{psi_R}:
 \begin{align*}
  \Delta \psi_R= H(\e) (e^{\psi_R}-1)+H(\e)-T_{2k-1}(H(\e))-\int_{\mathbb{R}^3}\e^k R\, \mathrm{d}v,
\end{align*}
which implies that
\begin{align}\label{partial-t-psi-R}
  \Delta( \partial_t \psi_R)= \partial_t H(\e) (e^{\psi_R}-1)+H (\e)\partial_t (e^{\psi_R}-1)
  +\partial_t (H(\e)-T_{2k-1}(H(\e)))-\int_{\mathbb{R}^3}\e^k \partial_t R\, \mathrm{d}v.
\end{align}
Since it holds the identity
$$\partial_t[ H(\e) (e^{\psi_R}-1)]\psi_R=\partial_t [H(\e)(\psi_Re^{\psi_R}-e^{\psi_R}+1)]+\partial_t H (\e) (e^{\psi_R}-\psi_R-1),$$
we multiply \eqref{partial-t-psi-R} by $\psi_R$ to get
\begin{align}\label{pre-L2-psi_R}
  \frac{\mathrm{d}}{\mathrm{d}t}&\int_{\mathbb{R}^3}(\psi_Re^{\psi_R}-e^{\psi_R}+1)H(\e) \mathrm{d}x+\frac{1}{2}\frac{\mathrm{d}}{\mathrm{d}t}\int_{\mathbb{R}^3}|\nabla \psi_R|^2 \mathrm{d}x\nonumber\\
  &= -\int_{\mathbb{R}^3}\big[\partial_t (H(\e)-T_{2k-1}(H(\e))) \psi_R
  + (e^{\psi_R}-\psi_R-1)\partial_t H(\e)\big]\mathrm{d}x\nonumber\\
  &\quad\,+ \iint_{\mathbb{R}^3\times \mathbb{R}^3}\e^k \partial_t R \psi_R \mathrm{d}x\mathrm{d}v.
\end{align}
By \eqref{vpbr}$_1$, the last term on the right-hand side of \eqref{pre-L2-psi_R} equals
\begin{align*}
  & \e^{2k} \iint_{\mathbb{R}^3\times \mathbb{R}^3} \partial_t R \phi_R \mathrm{d}x\mathrm{d}v
  = \e^{2k} \iint_{\mathbb{R}^3\times \mathbb{R}^3} -v\cdot \nabla_x R \phi_R \mathrm{d}x\mathrm{d}v
  = \e^{2k} \iint_{\mathbb{R}^3\times \mathbb{R}^3} v \sqrt{\mu}f \cdot\nabla_x\phi_R \mathrm{d}x\mathrm{d}v.
\end{align*}
Hence we get the identity:
\begin{align}\label{L2-psi_R}
  \frac{\mathrm{d}}{\mathrm{d}t}&\int_{\mathbb{R}^3}(\psi_Re^{\psi_R}-e^{\psi_R}+1)H(\e) \mathrm{d}x+\frac{1}{2}\frac{\mathrm{d}}{\mathrm{d}t}\int_{\mathbb{R}^3}|\nabla \psi_R|^2 \mathrm{d}x\nonumber\\
  & = -\int_{\mathbb{R}^3}\big[\partial_t (H(\e)-T_{2k-1}(H(\e))) \psi_R
  + (e^{\psi_R}-\psi_R-1)\partial_t H(\e)\big]\mathrm{d}x\nonumber\\
  &\quad\, + \iint_{\mathbb{R}^3\times \mathbb{R}^3}\e^k v \sqrt{\mu}f \cdot\nabla_x \psi_R \mathrm{d}x\mathrm{d}v.
\end{align}
By \eqref{dtH-e}, the first term on the right-hand side of \eqref{L2-psi_R} is controlled by 
\begin{align*}
&\left|\int_{\mathbb{R}^3}\partial_t (H(\e)-T_{2k-1}(H(\e))) \psi_R \mathrm{d}x\right|
\leq  \|\partial_t (H(\e)-T_{2k-1}(H(\e)))\|_0 \|\psi_R \|_0
\leq  C\e^{2k}\e^{\frac{1}{2}}\|\phi_R\|_0.
\end{align*}
By \eqref{psiR-e} and \eqref{dtH-Linfty}, the second term on the right-hand side of \eqref{L2-psi_R} is controlled by 
\begin{align*}
\left|\int_{\mathbb{R}^3} (e^{\psi_R}-\psi_R-1)\partial_t H(\e) \mathrm{d}x\right|
\leq & \|\partial_t H(\e)\|_\infty 
\int_{\mathbb{R}^3}|\psi_R|^2 \Big(\frac{1}{2!}+\frac{|\psi_R|}{3!}
+\frac{|\psi_R|^2}{4!}+\cdots\Big)\mathrm{d}x\\
\leq & C\Big(\frac{1}{(1+t)^{\frac{16}{15}}} +\e \mathcal{I}_1(t,\e)\Big)
\int_{\mathbb{R}^3}|\psi_R|^2 \cdot e \mathrm{d}x\\
\leq & C\Big(\frac{1}{(1+t)^{\frac{16}{15}}} +\e \mathcal{I}_1(t,\e)\Big)\|\psi_R\|_0^2\\
= & C\e^{2k}\Big(\frac{1}{(1+t)^{\frac{16}{15}}} +\e \mathcal{I}_1(t,\e)\Big)\|\phi_R\|_0^2.
\end{align*}
Thus, \eqref{L2-psi_R} implies that
\begin{align}\label{L2-psiR-estimate}
 & \frac{\mathrm{d}}{\mathrm{d}t}\int_{\mathbb{R}^3}\e^{-2k}H(\e)(\psi_Re^{\psi_R}-e^{\psi_R}+1) \mathrm{d}x+\frac{1}{2}\frac{\mathrm{d}}{\mathrm{d}t}\int_{\mathbb{R}^3}|\nabla \phi_R|^2 \mathrm{d}x\nonumber\\
  &\quad -\iint_{\mathbb{R}^3\times \mathbb{R}^3}  v \sqrt{\mu}f \cdot\nabla_x\phi_R \mathrm{d}x\mathrm{d}v\leq  C\Big(\frac{1}{(1+t)^{\frac{16}{15}}} +\e \mathcal{I}_1(t,\e)+\e^{\frac{1}{2}}\Big)(\|\phi_R\|_0^2+\|\phi_R\|_0).
\end{align}

We now deal with the right-hand side of \eqref{L2} term by term.

(1). For the first term, from \eqref{psi_R}, we have
\begin{align*}
  \iint_{\mathbb{R}^3\times \mathbb{R}^3} u_0 \sqrt{\mu}f \cdot \nabla_x \phi_R \mathrm{d}x\mathrm{d}v
  =&-\int_{\mathbb{R}^3}\Delta \phi_R u_0 \cdot \nabla_x \phi_R\mathrm{d}x\\
  &+ \int_{\mathbb{R}^3}\e^{-k}H(\e)\big(e^{\e^k\phi_R}-1\big) u_0 \cdot \nabla_x \phi_R\mathrm{d}x\\
  &+\int_{\mathbb{R}^3}\e^{-k}(H(\e)-T_{2k-1}(H(\e))) u_0 \cdot \nabla_x \phi_R\mathrm{d}x.
\end{align*}
For the terms on the right-hand side above, we deduce that
\begin{align*}
-\int_{\mathbb{R}^3}\Delta \phi_R u_0 \cdot \nabla \phi_R\mathrm{d}x
=\,&\int_{\mathbb{R}^3} (\nabla \phi_R \cdot \nabla) u_0 \cdot \nabla \phi_R\mathrm{d}x-\frac{1}{2}\int_{\mathbb{R}^3}(\nabla\cdot u_0)|\nabla \phi_R|^2 \mathrm{d}x\\
\leq \,& \frac{C}{(1+t)^{\frac{16}{15}}}\|\nabla_x \phi_R\|_0^2,
\end{align*}
and by \eqref{psiR-e} and mean value theorem again,
\begin{align*}
\int_{\mathbb{R}^3}\e^{-k}H(\e)\big(e^{\e^k\phi_R}-1\big) u_0 \cdot \nabla_x \phi_R\mathrm{d}x
=\,\,& \int_{\mathbb{R}^3}H(\e) e^{\psi_\xi}\phi_R u_0 \cdot \nabla_x \phi_R\mathrm{d}x\\
\leq \,\,& \frac{C}{(1+t)^{\frac{16}{15}}}\| \phi_R\|_0\|\nabla_x \phi_R\|_0,
\end{align*}
and by \eqref{H-e},
\begin{align*}
  \int_{\mathbb{R}^3}\e^{-k}(H(\e)-T_{2k-1}(H(\e))) u_0 \cdot \nabla_x \phi_R\mathrm{d}x 
  \leq & \,\frac{C}{(1+t)^{\frac{16}{15}}}
  \|\e^{-k}(H(\e)-T_{2k-1}(H(\e)))\|_0\|\nabla_x \phi_R\|_0\\
  \leq & \,\frac{C\e^{\frac{1}{2}}}{(1+t)^{\frac{16}{15}}}
  \|\nabla_x \phi_R\|_0.
\end{align*}
Hence, we obtain
\begin{align*}
  \Big|\iint_{\mathbb{R}^3\times \mathbb{R}^3} u_0 \sqrt{\mu}f \cdot \nabla_x \phi_R \mathrm{d}x\mathrm{d}v\Big|
  &\leq  \frac{C}{(1+t)^{\frac{16}{15}}}(\| \phi_R\|^2_0+\|\nabla_x \phi_R\|^2_0+\|\nabla_x \phi_R\|_0).
\end{align*}

(2). Since the term $$\frac{1}{2}\{\partial_t+v \cdot \nabla_x\}\theta_0- \theta_0\frac{\{\partial_t+v\cdot \nabla_x-\nabla_x\phi_0 \cdot \nabla_v\}\sqrt{\mu}}{\sqrt{\mu}}$$ behaves like $|v|^3$ for large $|v|$, we notice that
$(1+|v|^2)^{3/2}f\leq (1+|v|^2)^{-2}h$ for $\beta\geq 7/2$ and get 
\begin{align*}
&\Big\langle \frac{1}{2}\{\partial_t+v \cdot \nabla_x\}\theta_0f- \theta_0\frac{\{\partial_t+v\cdot \nabla_x-\nabla_x\phi_0 \cdot \nabla_v\}\sqrt{\mu}}{\sqrt{\mu}} f, f \Big\rangle\\
\leq  & \; C (\|\partial \rho_0\|_0+\|\partial u_0\|_0+\|\partial \theta_0\|_0+\|\nabla \phi_0\|_0)\big\|(1+|v|^2)^{3/2}f \mathbf{1}_{\{|v|\geq \frac{\kappa}{\sqrt{\e}}\}}\big\|_\infty \|f\|_0\\
&\;+ C (\|\partial \rho_0\|_\infty+\|\partial u_0\|_\infty+\|\partial \theta_0\|_\infty+\|\nabla \phi_0\|_\infty)
\big\|(1+|v|^2)^{3/4}f \mathbf{1}_{\{|v|\leq \frac{\kappa}{\sqrt{\e}}\}}\big\|^2_0\\
\leq &\; C_\kappa \e^2 \|h\|_\infty \|f\|_0+\frac{C}{(1+t)^{\frac{16}{15}}}\big(\|f\|_0^2
+\frac{\kappa^2}{\e}\|(\mathbf{I}-\mathbf{P})f\|_\nu^2\big).
\end{align*}

(3). It is clear to deduce that
\begin{align*}
\left\langle\varepsilon^{k-1} \Gamma(f,f),\theta_0 f\right\rangle \leq C   \varepsilon^{k-1} \|h\|_\infty \|f\|_0^2,
\end{align*}
and
\begin{align*}
&  \Big\langle \theta_0\sum_{i=1}^{2k-1}\varepsilon^{i-1}\Big\{\Gamma\Big(\frac{F_i}
{\sqrt{\mu}},f\Big)+\Gamma\Big(f, \frac{F_i}{\sqrt{\mu}}\Big)\Big\}, 
f\Big\rangle \leq C_\kappa \e \mathcal{I}_1 (t,\e)\|f\|^2_0 + \frac{\kappa^2}{\e}\|(\mathbf{I}-\mathbf{P})f\|^2_\nu,
\end{align*}
and
\begin{align*}
&-\e^k \Big\langle\nabla_x \phi_R \cdot \frac{v-u_0}{2}f, f\Big\rangle+\Big\langle \theta_0\sum_{i=1}^{2k-1}\e^i \frac{\nabla_v F_i}{\sqrt{\mu}}\cdot \nabla_x \phi_R, f\Big\rangle\\
&\,\quad\leq C\e^k \|h\|_\infty \|\nabla_x \phi_R\|_0 \|f\|_0
+C\e \mathcal{I}_1(t,\e) (\|\nabla_x \phi_R\|^2_0 +\|f\|^2_0),
\end{align*}
and
\begin{align*}
&\Big\langle \sum_{i=1}^{2k-1}\e^{i} \nabla_x \phi_i \cdot \frac{v-u_0}{2}f, f\Big\rangle
\leq \;C_\kappa\e \mathcal{I}_1(t,\e)\|f\|_0^2+ \frac{\kappa^{2}}{\e}\|(\mathbf{I}-\mathbf{P})f\|^2_\nu,
\end{align*}
and
\begin{align*}
\e^{k-1}\Big\langle \frac{A}{\sqrt{\mu}}, \theta_0 f\Big\rangle \leq C \mathcal{I}_2(t,\e) \e^{k-1}\|f\|_0.
\end{align*}

It is well-known that there exists a $\delta_0>0$ such that $\langle Lf, f\rangle\geq \delta_0 \|(\mathbf{I-P})f\|^2_\nu$.
Choosing $\kappa$ sufficiently small, and combining \eqref{L2} with \eqref{L2-psiR-estimate} yields
\begin{align}\label{low}
&\frac{\mathrm{d}}{\mathrm{d}t}\int_{\mathbb{R}^3}\e^{-2k}H(\e)(\psi_Re^{\psi_R}-e^{\psi_R}+1) \mathrm{d}x+\frac{1}{2}\frac{\mathrm{d}}{\mathrm{d}t}\big(\|\nabla \phi_R\|_0^2 +\|\sqrt{\theta_0}f\|_0^2\big)+\frac{\delta_0}{2\e}\theta_M \|(\mathbf{I-P})f\|^2_\nu\nonumber\\
& \quad \leq C\Big(\frac{1}{(1+t)^{\frac{16}{15}}} +\e \mathcal{I}_1(t,\e)+\e^{\frac{1}{2}}\Big)(\|\phi_R\|_0^2+\|\phi_R\|_0
+\|\nabla_x\phi_R\|_0^2+\|\nabla_x\phi_R\|_0+\|f\|_0^2)
\nonumber\\
&\qquad +C\mathcal{I}_2(t,\e) \e^{k-1}\|f\|_0+ C\big(\e^2 \|h\|_\infty \|f\|_0+\e^{k-1}\|h\|_\infty \|f\|_0^2
+\e^k \|h\|_\infty\|f\|_0\|\nabla_x\phi_R\|_0\big).
\end{align}

\smallskip

\section{$W^{1,\infty}$ Estimates for the Remainder $R$}\label{sec4}
We first present an important lemma as below.
For any function $\phi\in L^\infty([0, T];C^{2,\alpha}(\mathbb{R}^3))$, we define the characteristics $[X(\tau;t,x,v), V(\tau;t,x,v)]$ passing through $(t,x,v)$ such that
\begin{align*}
&\frac{\mathrm{d}X(\tau;t,x,v)}{\mathrm{d} \tau}=V(\tau;t,x,v),\,\,\,X(t;t,x,v)=x,\\
&\frac{\mathrm{d}V(\tau;t,x,v)}{\mathrm{d} \tau}=-\nabla_x \phi(\tau,X(\tau;t,x,v)),\,\,\,V(t;t,x,v)=v.
\end{align*}
For simplify, we shall denote $[X(\tau;t,x,v), V(\tau;t,x,v)]$ by $[X(\tau),V(\tau)]$.
\begin{lemma}\label{1}
Recall that $h(t,x,v)= w(v) \frac{R(t,x,v)}{\sqrt{\mu_M(v)}}$. Assume that
$0 \leq T \leq \e^{-m}$, $0<m<\frac{1}{2}$, and
\begin{align}\label{assume}
\sup_{0\leq \tau \leq T}\e^k \|h(\tau)\|_{W^{1,\infty}}\leq \e^{\frac{1}{2}}.
\end{align}
Then, it holds
\begin{align}\label{DxX}
\sup_{0\leq \tau \leq T}\{\|\partial_x X(\tau)\|_\infty+\|\partial_v V(\tau)\|_\infty\}\leq C.
\end{align}
Moreover, there exists a sufficiently small $T_0\in (0, T]$ such that for each $0\leq \tau \leq t \leq T_0$, we have
\begin{align}
&\frac{1}{2}|t-\tau|^3\leq \Big|\det \Big(\frac{\partial X(\tau)}{\partial v}\Big)\Big|\leq 2|t-\tau|^3,\label{1-shi}\\
&|\partial_v X(\tau)|\leq 2 |t-\tau|,\label{2-shi}\\
& \frac{1}{2}\leq \Big|\det \Big(\frac{\partial V(\tau)}{\partial v}\Big)\Big|\leq 2,\,\,\,
\frac{1}{2}\leq \Big|\det \Big(\frac{\partial X(\tau)}{\partial x}\Big)\Big|\leq 2,\label{3-shi}\\
&\sup_{0\leq \tau\leq T_0,\,x_0 \in \mathbb{R}^3,\,|v|\leq N}\Big(\int_{\{|x-x_0|\leq N\}}( |\partial_{xv}X(\tau)|^2+  |\partial_{vv}X(\tau)|^2   )\mathrm{d}x\Big)^{\frac{1}{2}}\leq C_N, \,\,\,\mathrm{for}\,\,N\geq 1.\label{4-shi}
\end{align}
\end{lemma}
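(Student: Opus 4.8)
\emph{Proof plan.} This is the usual linearization of the Vlasov characteristics, so the heart of the matter is quantitative control of the driving force $\nabla_x\phi$. First I would record that, for $0\le\tau\le T\le\e^{-m}$ with $m<\tfrac{1}{2}$, the quantity $\mathcal{M}(\tau):=\|\nabla_x\phi(\tau)\|_\infty+\|D^2_{xx}\phi(\tau)\|_\infty$ obeys $\sup_{[0,T]}\mathcal{M}\le C$ and $\int_0^T\mathcal{M}(\tau)\,\mathrm{d}\tau\le C$ uniformly in $\e$: the leading part $\phi_0$ contributes $\le C(1+\tau)^{-16/15}$ by Remark \ref{phi_0} and Proposition \ref{result-of-guoyan} (integrable in $\tau$); by Proposition \ref{propa} and Sobolev embedding the $\sum_{i=1}^{2k-1}\e^i\phi_i$–contribution is $\le C\e\,\mathcal{I}_1(\tau,\e)=o(1)$; and the $\e^k\phi_R$–contribution is $\le C\e^{1/2}=o(1)$ by Propositions \ref{prop1}--\ref{prop2}, the assumption \eqref{assume}, and the elliptic estimates for \eqref{psi_R}.

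\emph{The long-time bound \eqref{DxX}.} Differentiating the characteristic ODEs in $x$ and in $v$ gives, with $M(\tau):=D^2_{xx}\phi(\tau,X(\tau))$, the linear systems $\partial_\tau(\partial_xX)=\partial_xV,\quad \partial_\tau(\partial_xV)=-M\,\partial_xX$ (data $\partial_xX(t)=I$, $\partial_xV(t)=0$) and $\partial_\tau(\partial_vX)=\partial_vV,\quad \partial_\tau(\partial_vV)=-M\,\partial_vX$ (data $\partial_vX(t)=0$, $\partial_vV(t)=I$). From the Duhamel identities $\partial_xX(\tau)=I-\int_\tau^t(r-\tau)M(r)\partial_xX(r)\,\mathrm{d}r$, $\partial_vX(\tau)=-\int_\tau^t\partial_vV(r)\,\mathrm{d}r$ and $\partial_vV(\tau)=I+\int_\tau^tM(r)\partial_vX(r)\,\mathrm{d}r$, I would run a Gronwall estimate organised so that the growth of the linearized flow is governed by $\int_0^T\mathcal{M}$; this yields $\|\partial_xX(\tau)\|_\infty+\|\partial_xV(\tau)\|_\infty+\|\partial_vV(\tau)\|_\infty\le C$ and $|\partial_vX(\tau)|\le C|t-\tau|$ on all of $[0,T]$, which is \eqref{DxX} (and supplies the linear-in-time bound used below).

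\emph{The short-time bounds \eqref{1-shi}--\eqref{4-shi}.} Here I would argue perturbatively around the free transport ($\partial_vX^{\mathrm{fr}}=(\tau-t)I$, $\partial_xX^{\mathrm{fr}}=\partial_vV^{\mathrm{fr}}=I$). Writing $\partial_vX(\tau)=(\tau-t)I+\mathcal{E}(\tau)$, integrating the ODE twice and using \eqref{DxX} gives $\|\mathcal{E}(\tau)\|\le C|t-\tau|\,T_0\sup_{[0,T_0]}\mathcal{M}$; choosing $T_0\in(0,T]$ small enough that $C\,T_0\sup_{[0,T]}\mathcal{M}$ is below a fixed small threshold (possible since $\sup_{[0,T]}\mathcal{M}\le C$ uniformly in $\e$) yields \eqref{2-shi} and $\det(\partial_vX(\tau))=(t-\tau)^3\det\!\big(I+(\tau-t)^{-1}\mathcal{E}(\tau)\big)\in[\tfrac{1}{2},2](t-\tau)^3$, i.e. \eqref{1-shi}; the same small-correction argument for $\partial_vV(\tau)=I+\int_\tau^tM(r)\partial_vX(r)\,\mathrm{d}r$ and $\partial_xX(\tau)=I-\int_\tau^t(r-\tau)M(r)\partial_xX(r)\,\mathrm{d}r$ gives \eqref{3-shi}. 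For \eqref{4-shi} I would differentiate once more, obtaining $\partial_\tau^2(\partial_{ab}X)=-M\,\partial_{ab}X-(D^3_{xxx}\phi)(\tau,X(\tau))[\partial_aX,\partial_bX]$ for $a,b\in\{x,v\}$, bound the first-derivative factors by \eqref{DxX}--\eqref{2-shi}, use $D^3_{xxx}\phi(\tau,\cdot)\in L^2_{\mathrm{loc}}$ (smooth for $\phi_0$, bounded in $L^2$ for $\e^i\phi_i$ by Proposition \ref{propa}, and $D^3\psi_R\in L^2_{\mathrm{loc}}$ from \eqref{psi_R} since its right-hand side lies in $H^1_{\mathrm{loc}}$ under the running assumptions on $f,h$), change variables $x\mapsto X(\tau;t,x,v)$ (comparable Jacobian by \eqref{3-shi}), and close by a Gronwall estimate in $\big(\int_{|x-x_0|\le N}(|\partial_{xv}X|^2+|\partial_{vv}X|^2)\,\mathrm{d}x\big)^{1/2}$ over $\tau\in[0,T_0]$.

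\emph{Main obstacle.} The hard part will be \eqref{DxX} over the long horizon $[0,\e^{-m}]$: since $\nabla_x\phi_0$ decays only like $(1+\tau)^{-16/15}$ --- integrable but not $\tau$-weighted integrable --- the Gronwall for the linearized flow has to be arranged so that its amplification depends on $\int_0^T\mathcal{M}(\tau)\,\mathrm{d}\tau$ rather than on the cruder $T\int_0^T\mathcal{M}$ or $\int_0^T\tau\,\mathcal{M}(\tau)\,\mathrm{d}\tau$, and it is exactly here that $m<\tfrac{1}{2}$ together with the $o(1)$–smallness of the non-$\phi_0$ part of $\mathcal{M}$ (from Proposition \ref{propa}, Propositions \ref{prop1}--\ref{prop2} and \eqref{assume}) enters. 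A secondary technical point is securing the local $H^1$–regularity of the source term of \eqref{psi_R} needed for $D^3\psi_R\in L^2_{\mathrm{loc}}$ in the proof of \eqref{4-shi}.
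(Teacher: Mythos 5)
Your outline matches the paper's proof in structure and in all essential steps. You set up the same $C^{2,\alpha}$ (equivalently $W^{2,\infty}$) bound on the full potential $\phi=\phi_0+\cdots+\e^{2k-1}\phi_{2k-1}+\e^k\phi_R$ via the Poisson equation, you use the same differentiated characteristic ODE $\partial_\tau^2(\partial X)=-\nabla_{xx}\phi\,\partial X$, and your perturbation about free transport on a short window is exactly the paper's Taylor-remainder argument around $\tau=t$ for \eqref{1-shi}--\eqref{3-shi}. For \eqref{4-shi} you identify the right second-variation ODE and the change of variables via \eqref{3-shi}; the paper then pins down the local $L^2$ bound on $\nabla_x^3\phi$ by cutting off and applying Calder\'on--Zygmund to $\Delta D_x(\chi\phi)$ (substituting the Poisson equation), whereas you invoke $D^3\phi\in L^2_{\mathrm{loc}}$ somewhat more loosely and mention $H^1_{\mathrm{loc}}$ regularity of the right-hand side of \eqref{psi_R}; the paper's route through the Poisson equation and the bound $\e^kN^{3/2}\|h\|_{W^{1,\infty}}$ under \eqref{assume} is the concrete version of what you sketch.

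The one place your account goes beyond the paper is also the one place you should be careful. You flag \eqref{DxX} over the full horizon $[0,\e^{-m}]$ as the main obstacle and propose to close a Gronwall whose amplification is controlled by $\int_0^T\mathcal{M}$ rather than a $\tau$-weighted integral. But the Duhamel identity you yourself write, $\partial_xX(\tau)=I-\int_\tau^t(r-\tau)M(r)\partial_xX(r)\,\mathrm{d}r$, carries an intrinsic weight $(r-\tau)$, and $\int_0^T r\,\mathcal{M}(r)\,\mathrm{d}r$ is \emph{not} uniformly bounded in $\e$ when the dominant piece decays only like $(1+r)^{-16/15}$ and $T\sim\e^{-m}$; the mere integrability $\int_0^T\mathcal{M}\le C$ is not enough to kill the $(r-\tau)$ factor, so the proposed Gronwall rearrangement does not close as stated. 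The paper itself is terse here --- it asserts \eqref{DxX} "by using $\|\nabla_{xx}\phi\|_\infty\le C$ and integrating in time" without addressing the horizon issue --- and the lemma is in fact only invoked later on windows $0\le\tau\le t\le T_0$ with $T_0$ a small absolute constant (and then bootstrapped), for which the short-time Gronwall is unproblematic. So you have correctly located a real soft spot, but the resolution you propose is not what the paper does and would need a genuinely different mechanism (or a reading of \eqref{DxX} as restricted to the short window) to be made rigorous.
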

\begin{proof}
For any $\alpha\in (0,\frac{1}{2})$, $H^2(\mathbb{R}^3)\hookrightarrow C^{0,\alpha}(\mathbb{R}^3)$. By \eqref{psiR-e}, we get
\begin{align*}
\|\phi\|_{C^{0,\alpha}}\leq & \,\|\phi_0\|_{C^{0,\alpha}}
+ \e \|\phi_1\|_{C^{0,\alpha}}+\cdots +\e^{2k-1}\|\phi_{2k-1}\|_{C^{0,\alpha}}+\e^k \|\phi_R\|_{C^{0,\alpha}}\\
\leq & \,C \left(\|\phi_0\|_{2}
+ \e \|\phi_1\|_{2}+\cdots +\e^{2k-1}\|\phi_{2k-1}\|_{2}+\e^k \|\phi_R\|_{2}
\right)\\
\leq & \,C \big(1
+ \e +\cdots +\e^{2k-1}(1+t)^{2k-2}+\e^k\big(\e^{\frac{1}{2}}+\|f\|_0\big)
\big)\\
\leq & \,C +C \e \mathcal{I}_1(t,\e)\leq C.
\end{align*}
From \eqref{vpb}$_2$, we know that
\begin{align*}
-(I-\Delta) \phi= &\,e^\phi-\phi-1-(\rho-1)\\
= &\,e^\phi-\phi-1-\Big(\rho_0-1+\e\rho_1+\cdots +\e^{2k-1}\rho_{2k-1}+\int_{\mathbb{R}^3} \e^k R\,\mathrm{d}v\Big).
\end{align*}
Hence,
\begin{align*}
\|(I-\Delta) \phi\|_{C^{0,\alpha}}\leq  &\,\|e^\phi-\phi-1\|_{C^{0,\alpha}}+\Big(\|\rho_0-1\|_{C^{0,\alpha}}
+\cdots +\e^{2k-1}\|\rho_{2k-1}\|_{C^{0,\alpha}}+\Big\|\int_{\mathbb{R}^3} \e^k R\,\mathrm{d}v\Big\|_{C^{0,\alpha}}\Big)\\
\leq  &\,e^{\|\phi\|_{C^{0,\alpha}}}+ C\left(\|\rho_0-1\|_2
+\cdots +\e^{2k-1}\|\rho_{2k-1}\|_2+\e^k \|h\|_{W^{1,\infty}}\right)\\
\leq &\, e^C + C (1+C\e\mathcal{I}_1(t,\e)+\e^{\frac{1}{2}})\leq C.
\end{align*}
Since $(I-\Delta)^{-1}\in \Psi^{-2}$, there exists a constant $C>0$, independent of $\phi$, such that
\begin{align}\label{D2phi}
\|\phi\|_{C^{2,\alpha}}\leq C \|(I-\Delta) \phi\|_{C^{0,\alpha}}\leq C.
\end{align}
Noting that for $\partial=\partial_x$ or $\partial_v$, it holds
\begin{align*}
\frac{\mathrm{d}^2\partial X(\tau)}{\mathrm{d}\tau^2}=-\nabla_{xx}\phi(\tau, X(\tau))\partial X.
\end{align*}
By using $\|\nabla_{xx}\phi\|_\infty\leq C$ and integrating in time, one deduces that
\begin{align*}
\sup_{0\leq \tau \leq T}\{\|\partial_x X(\tau)\|_\infty+\|\partial_v V(\tau)\|_\infty\}\leq C.
\end{align*}

Moreover,
consider the Taylor expansions of 
$\frac{\partial X(\tau)}{\partial v}$, $\frac{\partial X(\tau)}{\partial x}$ and $\frac{\partial V(\tau)}{\partial v}$ in $\tau$ around $t$:
\begin{align*}
\frac{\partial X(\tau)}{\partial v}=&\,(\tau-t)I+\frac{(\tau-t)^2}{2}\frac{\mathrm{d}^2}{\mathrm{d}\tau^2}\frac{\partial X(\bar{\tau})}{\partial v},\\
\frac{\partial X(\tau)}{\partial x}=&\,I+\frac{(\tau-t)^2}{2}\frac{\mathrm{d}^2}{\mathrm{d}\tau^2}\frac{\partial X(\bar{\tau})}{\partial x},\\
\frac{\partial V(\tau)}{\partial v}=&\,I+(\tau-t)\frac{\mathrm{d}}{\mathrm{d}\tau}\frac{\partial V(\bar{\tau})}{\partial v}\\
=& \,I- (\tau-t)\nabla_{xx} \phi(\bar{\tau})
\frac{\partial X(\bar{\tau})}{\partial v},
\end{align*}
for $\tau\leq \bar{\tau}\leq t$.
Note that
\begin{align*}
\Big|\frac{\mathrm{d}^2}{\mathrm{d}\tau^2}\frac{\partial X(\bar{\tau})}{\partial v}\Big|=
\Big|\frac{\partial}{\partial v}\nabla_x \phi(\bar{\tau}, X(\bar{\tau}))\Big|
\leq  |\nabla_{xx} \phi(\bar{\tau})|\Big|\frac{\partial X(\bar{\tau})}{\partial v}\Big|
\leq  C \sup_{0\leq \bar{\tau}\leq T}\Big\|\frac{\partial X(\bar{\tau})}{\partial v}\Big\|_\infty,
\end{align*}
and 
\begin{align*}
\Big|\frac{\mathrm{d}^2}{\mathrm{d}\tau^2}\frac{\partial X(\bar{\tau})}{\partial x}\Big|=
\Big|\frac{\partial}{\partial x}\nabla_x \phi(\bar{\tau}, X(\bar{\tau}))\Big|
\leq  |\nabla_{xx} \phi(\bar{\tau})|\Big|\frac{\partial X(\bar{\tau})}{\partial x}\Big|
\leq C \sup_{0\leq \bar{\tau}\leq T}\Big\|\frac{\partial X(\bar{\tau})}{\partial x}\Big\|_\infty,
\end{align*}
we thus choose $T_0$ sufficiently small such that for $0\leq \tau \leq t \leq T_0$, \eqref{1-shi}, \eqref{2-shi} and \eqref{3-shi} hold.

To prove \eqref{4-shi}, we notice that
\begin{align*}
\frac{\mathrm{d}^2}{\mathrm{d}\tau^2}\partial \partial_v X(\tau)=& \partial \partial_v \Big(\frac{\mathrm{d}^2}{\mathrm{d}\tau^2}X(\tau)\Big)\\
=& -\partial \partial_v (\nabla_x \phi(\tau, X(\tau)))\\
=& -\partial (\nabla_x^2 \phi(\tau, X(\tau))\partial_v X(\tau))\\
=& -\nabla_x^3 \phi(\tau, X(\tau))\partial_v X(\tau) \partial X(\tau)\\
&-\nabla_x^2 \phi(\tau, X(\tau))\partial \partial_v X(\tau).
\end{align*}
By integrating the above equation twice in time, we see that
\begin{align*}
\|\partial \partial_v X(\tau)\|_{L^2(\{|x-x_0|\leq N\})}
& \leq\,\frac{T_0^2 \|\nabla_{x,v}X\|_\infty^2}{2}\sup_{0\leq \tau\leq T_0}\|\nabla_x^3 \phi(\tau, X(\tau))\|_{L^2(\{|x-x_0|\leq N\})}\\
&\quad\,+ \frac{T_0^2 \|\nabla_{x,v}\phi\|_\infty^2}{2}\sup_{0\leq \tau\leq T_0}\|\partial \partial_v X(\tau)\|_{L^2(\{|x-x_0|\leq N\})}.
\end{align*}
For $|v|\leq N$, we easily deduce that
\begin{align*}
\{|x-x_0|\leq N\}\cap \{|v|\leq N\}\subset \{|X(\tau)-x_0|\leq CN\}
\end{align*}
for $0\leq \tau\leq T_0\ll 1$ and $N\geq 1$.
According to \eqref{DxX}, \eqref{2-shi}, \eqref{3-shi} and \eqref{D2phi}, we get
\begin{align*}
&\sup_{0\leq \tau\leq T_0,\,x_0 \in \mathbb{R}^3,\,|v|\leq N}\|\partial \partial_v X(\tau)\|_{L^2(\{|x-x_0|\leq N\})}\\
\leq &\, C \sup_{0\leq \tau\leq T_0,\,x_0 \in \mathbb{R}^3,\,|v|\leq N}\|\nabla_x^3 \phi(\tau, X(\tau))\|_{L^2(\{|X(\tau)-x_0|\leq CN\})}\\
\leq &\, C \sup_{0\leq \tau\leq T_0,\,x_0 \in \mathbb{R}^3,\,|v|\leq N}\|\nabla_x^3 \phi(\tau)\|_{L^2(\{|x-x_0|\leq CN\})}
\Big|\det \Big\{\frac{\partial X(\tau)}{\partial x}\Big\}\Big|^{-\frac{1}{2}}\\
\leq &\, C \sup_{0\leq \tau\leq T_0,\,x_0 \in \mathbb{R}^3,\,|v|\leq N}\|\nabla_x^3 \phi(\tau)\|_{L^2(\{|x-x_0|\leq CN\})}.
\end{align*}
To control $\|\nabla_x^3 \phi(\tau)\|_{L^2(\{|x-x_0|\leq CN\})}$, we define $\chi(x)$ as a smooth cutoff function of $x$ on the domain $\{|x-x_0|\leq CN+1\}$.
By using Calder\'{o}n-Zygmund's inequality, we get
\begin{align*}
  \|\nabla_x^3 \phi(\tau)\|_{L^2(\{|x-x_0|\leq CN\})}
  \leq & \,\|\partial_{ij}\partial_k (\chi \phi)\|_{L^2}\\
  \leq & \,C \|\Delta D_x (\chi \phi)\|_{L^2}\\
  \leq & \,C \|\chi D_x \phi e^\phi\|_{L^2}+C \|\chi (D_x \rho_0+\cdots + \e^{2k-1}D_x \rho_{2k-1})\|_{L^2}\\
  &\,+ C \Big\|\chi \e^k \int_{\mathbb{R}^3}\frac{\sqrt{\mu_M}}{w}D_x h \mathrm{d}v \Big\|_{L^2}
  +C \Big\|\sum_{|\alpha|+|\beta|=3,\,|\beta|\leq 2}\partial^\alpha \chi \partial^\beta \phi \Big\|_{L^2}\\
  \leq &\, C N^{\frac{3}{2}}\|\phi\|_{C^{2,\alpha}}+C+C\e\mathcal{I}_1
  +\e^k N^{\frac{3}{2}}\|h\|_{W^{1,\infty}}\\
  \leq &\,C N^{\frac{3}{2}}.
\end{align*}
Then \eqref{4-shi} follows.
This completes the proof of Lemma \ref{1}.
\end{proof}

\subsection{$L^\infty$ Estimates}

Recall the global Maxwellian $\mu_M$:
\begin{align*}
\mu_M(v)=\frac{1}{(2\pi \theta_M)^{\frac{3}{2}}}\exp\Big\{-\frac{|v|^2}{2\theta_M}\Big\},
\end{align*}
and the local Maxwellian $\mu$:
\begin{align*}
\mu(t,x,v)=\frac{\rho_0(t,x)}{(2\pi \theta_0(t,x))^{\frac{3}{2}}}\exp\Big\{-\frac{|v-u_0(t,x)|^2}
{2\theta_0(t,x)}\Big\}, \quad \theta_0(t,x)= K \rho_0^{\frac{2}{3}}(t,x).
\end{align*}
Now, we define 
\begin{align*}
L_M g=-\frac{1}{\sqrt{\mu_M}}\{Q(\mu, \sqrt{\mu_M}g)+Q( \sqrt{\mu_M}g, \mu)\}=(\nu(\mu)+K_M)g,
\end{align*}
and
\begin{align*}
K_{M,w}g=wK_M \Big(\frac{g}{w}\Big).
\end{align*}
According to \eqref{vpbr}, the equation of $h=\frac{wR}{\sqrt{\mu_M}}$ reads
\begin{align}\label{weighted remainder eqn}
&\Big\{\partial_t+v\cdot \nabla_x-\nabla_x \phi \cdot \nabla_v+ \frac{\nu(\mu)}{\varepsilon}\Big\}h+ \frac{1}{\varepsilon}K_{M,w}h\nonumber\\
&\quad=\,\frac{\varepsilon^{k-1}w}{\sqrt{\mu_M}}
Q\Big(\frac{h\sqrt{\mu_M}}{w},\frac{h\sqrt{\mu_M}}{w}\Big)
+ \nabla_x \phi\cdot  \frac{w}{\sqrt{\mu_M}}\nabla_v \Big(\frac{\sqrt{\mu_M}}{w}\Big)h\nonumber\\
&\qquad+\sum_{i=1}^{2k-1}\frac{\e^{i-1}w}{\sqrt{\mu_M}}
\Big\{Q\Big(F_i, \frac{h\sqrt{\mu_M}}{w}\Big)+Q\Big(\frac{h\sqrt{\mu_M}}{w}, F_i\Big) \Big\}\nonumber\\
&\qquad+\nabla_x \phi_R \cdot \frac{w}{\sqrt{\mu_M}}\nabla_v \Big(\mu+\sum_{i=1}^{2k-1}\e^iF_i\Big) +\varepsilon^{k-1} \frac{w}{\sqrt{\mu_M}}A,
\end{align}
where $\phi=\phi_0+\cdots +\e^{2k-1}\phi_{2k-1}+\e^k \phi_R$. 
We have 
\begin{lemma}\label{lemma-h-Linfty}
  Assume that \eqref{assume} is valid. There exists a $T_0>0$ such that for $0\leq T_0\leq T\leq \e^{-m}$ $(0<m<\frac{1}{2})$ and $\e$ sufficiently small,
\begin{align*}
  \sup_{0\leq s \leq T_0}\{\e^{\frac{3}{2}}\|h(s)\|_\infty\}
  \leq &\,C \{\|\e^{\frac{3}{2}}h_0\|_\infty+\sup_{0\leq s \leq T}\|f(s)\|_0+\e^3\},\\
  \e^{\frac{3}{2}}\|h(T_0)\|_\infty\leq &\, \frac{1}{2}\|\e^{\frac{3}{2}}h_0\|_\infty+C \sup_{0\leq s \leq T}\|f(s)\|_0+C\e^3.
\end{align*}
\end{lemma}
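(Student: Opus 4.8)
The plan is to run Guo's $L^{2}$--$L^{\infty}$ scheme on the mild formulation of \eqref{weighted remainder eqn}, carefully tracking the powers of $\e$ and treating the genuinely new source $\nabla_{x}\phi_{R}\cdot\frac{w}{\sqrt{\mu_{M}}}\nabla_{v}(\mu+\sum_{i}\e^{i}F_{i})$ via the elliptic estimates of Section~\ref{Sec-phi_R}. First I fix $T_{0}>0$ so small that Lemma~\ref{1} applies on $[0,T_{0}]$ --- here the a priori bound \eqref{assume} is exactly what is needed --- and write $h$ through its Duhamel representation along the backward characteristics $[X(\tau),V(\tau)]$ generated by the full field $-\nabla_{x}\phi$ with $\phi=\phi_{0}+\cdots+\e^{2k-1}\phi_{2k-1}+\e^{k}\phi_{R}$. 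The contribution of the datum is $e^{-\frac{1}{\e}\int_{0}^{t}\nu(\mu)\,\mathrm{d}s}\,h_{0}(X(0),V(0))$ (the integral taken along the characteristic); since $\nu(\mu)\ge\nu_{0}>0$ and $T_{0}/\e\to\infty$ as $\e\to0$, this is bounded by $\|h_{0}\|_{\infty}$ on all of $[0,T_{0}]$ but by $\tfrac{1}{2}\|h_{0}\|_{\infty}$ at $t=T_{0}$ for $\e$ small --- the source of the two distinct constants in the statement.

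Next I dispose of the ``benign'' source terms, each of which picks up one factor of $\e$ from $\int_{0}^{t}e^{-\frac{\nu_{0}}{\e}(t-\tau)}\,\mathrm{d}\tau\le\e/\nu_{0}$. Using $\|\nabla_{x}\phi\|_{\infty}\le C+C\e\,\mathcal{I}_{1}(t,\e)\le C$ (from Propositions~\ref{propa}--\ref{prop2} and \eqref{assume}), the transport-weight term $\nabla_{x}\phi\cdot\frac{w}{\sqrt{\mu_{M}}}\nabla_{v}(\frac{\sqrt{\mu_{M}}}{w})h$, the quadratic term $\e^{k-1}\frac{w}{\sqrt{\mu_{M}}}Q(\cdot,\cdot)$, the coefficient terms $\sum_{i}\e^{i-1}\frac{w}{\sqrt{\mu_{M}}}Q(F_{i},\cdot)$, and the inhomogeneity $\e^{k-1}\frac{w}{\sqrt{\mu_{M}}}A$ are controlled by Proposition~\ref{propa} ($\|F_{i}\|,\|\phi_{i}\|\lesssim(1+t)^{i-1}$), the definitions \eqref{I_1}--\eqref{I_2}, the window $t\le\e^{-m}$ with $m<\tfrac{1}{2}$, and the velocity growth $\nu(\mu)(v)\gtrsim1+|v|$, which absorbs the linear-in-$v$ factors along the characteristics; after multiplication by $\e^{3/2}$ and use of \eqref{assume} (so $\|h\|_{\infty}\le\e^{1/2-k}$) each contributes either $\le C\e^{3}$ or a small multiple of $\sup_{[0,T_{0}]}\e^{3/2}\|h\|_{\infty}$. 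For the new term I rewrite \eqref{psi_R} as $(H(\e)e^{\psi_{\xi}}-\Delta)\phi_{R}=-\e^{-k}(H(\e)-T_{2k-1}(H(\e)))+\int_{\mathbb{R}^{3}}R\,\mathrm{d}v$, where $H(\e)e^{\psi_{\xi}}-\Delta$ is elliptic with positive zeroth order term, hence $(H(\e)e^{\psi_{\xi}}-\Delta)^{-1}\in\Psi^{-2}\colon L^{p}\to W^{2,p}$ for $p>3$; together with Morrey's embedding $W^{2,p}\hookrightarrow C^{1}$ and the $L^{p}$ analogue of \eqref{H-e} this gives $\|\nabla_{x}\phi_{R}\|_{\infty}\le C(\e^{1/2}+\|h\|_{\infty})$, and since $\frac{w}{\sqrt{\mu_{M}}}\nabla_{v}(\mu+\sum_{i}\e^{i}F_{i})$ is bounded (the Gaussian ratio $\mu/\sqrt{\mu_{M}}$ decays because $\theta_{0}\le2\theta_{M}$), this term contributes $\e^{3/2}\cdot\e\cdot C(\e^{1/2}+\|h\|_{\infty})=C\e^{3}+C\e\,(\e^{3/2}\|h\|_{\infty})$, matching the $\e^{3}$ in the statement.

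The heart of the proof is the gain term $\frac{1}{\e}K_{M,w}h$. I split the Grad kernel as $K_{M,w}=K_{M,w}^{\mathrm{near}}+K_{M,w}^{\mathrm{far}}$, with $K_{M,w}^{\mathrm{near}}$ of small mass (absorbed directly into $\sup\|h\|_{\infty}$ at the cost of a small constant) and $K_{M,w}^{\mathrm{far}}$ bounded with $v'$-support in $\{|v'|\le N\}$, and insert the Duhamel formula for $h$ once more into $K_{M,w}^{\mathrm{far}}h$. This produces the double-time term
\[
\frac{1}{\e^{2}}\int_{0}^{t}\!\!\int_{0}^{\tau}e^{-\frac{1}{\e}\int_{s}^{t}\nu}\iint k_{M,w}^{\mathrm{far}}(V(\tau),v_{1})\,k_{M,w}(V'(s),v_{2})\,h(s,X'(s),v_{2})\,\mathrm{d}v_{1}\,\mathrm{d}v_{2}\,\mathrm{d}s\,\mathrm{d}\tau
\]
along the two families of characteristics, plus lower-order remainders handled as above. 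For $|\tau-s|\le c\e$ I bound the kernels crudely, which costs only $\tfrac{c}{\nu_{0}}\sup\|h\|_{\infty}$, small for $c$ small. For $|\tau-s|>c\e$ I change variables $v_{1}\mapsto y=X'(s)$; its Jacobian is $\gtrsim|\tau-s|^{3}$ by \eqref{1-shi}, and since $|v_{1}|\le N$ forces $y$ into a ball of volume $\sim|\tau-s|^{3}$, a Cauchy--Schwarz in $y$ leaves a net factor $|\tau-s|^{-3/2}$; a further Cauchy--Schwarz in $v_{2}$ and the comparison $\|h(s)\mathbf{1}_{\{|v|\le N\}}\|_{0}\le C_{N}\|f(s)\|_{0}$ (valid since $w\sqrt{\mu}/\sqrt{\mu_{M}}$ is bounded above and below on $\{|v|\le N\}$) reduce this piece to $\frac{1}{\e^{2}}\int_{0}^{t}\!\int_{0}^{\tau}e^{-\frac{\nu_{0}}{\e}(t-s)}|\tau-s|^{-3/2}\,\mathrm{d}s\,\mathrm{d}\tau\cdot\sup_{[0,T]}\|f(s)\|_{0}$. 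Since $\int_{c\e}^{\infty}r^{-3/2}\,\mathrm{d}r\lesssim\e^{-1/2}$ and $\int_{0}^{t}e^{-\frac{\nu_{0}}{\e}(t-s)}\,\mathrm{d}s\le\e/\nu_{0}$, this equals $\tfrac{1}{\e^{2}}\cdot\e\cdot\e^{-1/2}=\e^{-3/2}$ times $\sup_{[0,T]}\|f\|_{0}$, i.e. exactly $\sup_{[0,T]}\|f\|_{0}$ after the $\e^{3/2}$ weight. \textbf{This change-of-variables/time-integral balance is the main obstacle}: the Jacobian $|\tau-s|^{3}$ alone gives a non-integrable $|\tau-s|^{-3}$, and it is only the Cauchy--Schwarz gain of $|\tau-s|^{3/2}$ over the $|\tau-s|^{3}$-sized $y$-ball, combined with the split of the time integral at scale $\e$, that produces the sharp power $\e^{-3/2}$.

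Finally I collect: the datum term, the $O(\e^{3})$ source contributions, the $\sup_{[0,T]}\|f\|_{0}$ contribution from the iterated gain term, and the small multiples of $\sup_{[0,T_{0}]}\e^{3/2}\|h\|_{\infty}$. Taking the supremum over $t\in[0,T_{0}]$ and moving the small $\sup\e^{3/2}\|h\|_{\infty}$ terms to the left gives the first inequality, with a constant $C$ in front of $\|\e^{3/2}h_{0}\|_{\infty}$; feeding this estimate back into the bound at the single time $t=T_{0}$, where the datum factor $e^{-\nu_{0}T_{0}/\e}\le\tfrac{1}{4}$ for $\e$ small, and re-absorbing, upgrades that coefficient to $\tfrac{1}{2}$, which is the second inequality. (The sharper restriction $m<\tfrac{1}{2}\tfrac{2k-3}{2k-2}$ with $k$ large, needed for the global iteration in Section~\ref{sec5}, enters here only through keeping $\e^{k-1}\mathcal{I}_{2}(t,\e)$ and $\e\,\mathcal{I}_{1}(t,\e)$ under control on the window $t\le\e^{-m}$.)
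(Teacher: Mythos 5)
Your argument follows the paper's own proof essentially step by step: the Duhamel representation \eqref{h-formula} along the characteristics of the full field $\phi$, the new elliptic estimate \eqref{DphiR-Linfty} via a $\Psi^{-2}$ parametrix together with Morrey's embedding and the $L^p$ consequence of \eqref{H-e} giving $\|\nabla_x\phi_R\|_\infty\le C\e^{1/2}+C\|h\|_\infty$, and the Guo--Jang $L^2$--$L^\infty$ kernel iteration (which the paper merely cites as ``similar to \cite{Juhi}'' and which you reproduce correctly, including the split of the time integral at scale $\e$ and the Jacobian gain from Lemma~\ref{1}). The only cosmetic discrepancy is that you phrase the elliptic step as $(H(\e)e^{\psi_\xi}-\Delta)^{-1}\in\Psi^{-2}$, but $\psi_\xi$ coming from the mean value theorem is not a smooth symbol; the paper avoids this by writing the equation as $-(H(\e)-\Delta)\psi_R=H(\e)(e^{\psi_R}-1-\psi_R)+\cdots$ as in \eqref{psiR-modified}, keeping only the genuinely smooth $H(\e)$ in the operator and absorbing the quadratic remainder into the forcing, which yields the same $L^p$ bound.
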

\begin{proof}
 Along the trajectory $[X(\tau), V(\tau)]$, it holds
\begin{align}\label{h-formula}
h(t,x,v) &= \exp\Big\{-\frac{1}{\varepsilon}\int_{0}^{t}\nu(\tau)\mathrm{d}\tau\Big\}
h(0,X(0;t,x,v),V(0;t,x,v)) \nonumber\\
&\quad - \int_{0}^{t} \exp\Big\{-\frac{1}{\varepsilon}\int_{s}^{t}\nu(\tau)\mathrm{d}\tau\Big\} \frac{1}{\varepsilon}K_{M,w}h(s,X(s),V(s))\mathrm{d}s\nonumber \\
&\quad + \int_{0}^{t} \exp\Big\{-\frac{1}{\varepsilon}\int_{s}^{t}\nu(\tau)\mathrm{d}\tau\Big\} \frac{\varepsilon^{k-1}w}{\sqrt{\mu_M}} Q\Big(\frac{h\sqrt{\mu_M}}{w},\frac{h\sqrt{\mu_M}}{w}\Big)
(s,X(s),V(s))\mathrm{d}s \nonumber\\
&\quad + \int_{0}^{t} \exp\Big\{-\frac{1}{\varepsilon}\int_{s}^{t}\nu(\tau)\mathrm{d}\tau\Big\} \sum_{i=1}^{2k-1}\frac{\varepsilon^{i-1}w}{\sqrt{\mu_M}} Q\Big(F_{i},\frac{h\sqrt{\mu_M}}{w}\Big)(s,X(s),V(s))\mathrm{d}s \nonumber\\
&\quad + \int_{0}^{t} \exp\Big\{-\frac{1}{\varepsilon}\int_{s}^{t}\nu(\tau)\mathrm{d}\tau\Big\} \sum_{i=1}^{2k-1}\frac{\varepsilon^{i-1}w}{\sqrt{\mu_M}} Q\Big(\frac{h\sqrt{\mu_M}}{w},F_{i}\Big)(s,X(s),V(s))\mathrm{d}s \nonumber\\
&\quad + \int_{0}^{t} \exp\Big\{-\frac{1}{\varepsilon}\int_{s}^{t}\nu(\tau)\mathrm{d}\tau\Big\} \nabla_{x}\phi \cdot \frac{w}{\sqrt{\mu_M}} \nabla_{v}\Big(\frac{\sqrt{\mu_M}}{w}\Big)h(s,X(s),V(s))\mathrm{d}s \nonumber\\
&\quad + \int_{0}^{t} \exp\Big\{-\frac{1}{\varepsilon}\int_{s}^{t}\nu(\tau)\mathrm{d}\tau\Big\} \nabla_{x}\phi_{R} \cdot \frac{w}{\sqrt{ \mu_M}} \nabla_{v}\Big(\mu + \sum_{i=1}^{2k-1}\varepsilon^{i}F_{i}\Big)(s,X(s),V(s))\mathrm{d}s \nonumber\\
&\quad + \int_{0}^{t} \exp\Big\{-\frac{1}{\varepsilon}\int_{s}^{t}\nu(\tau)\mathrm{d}\tau\Big\} \frac{\varepsilon^{k-1}w}{\sqrt{\mu_M}} A(s,X(s),V(s))\mathrm{d}s.
\end{align}
Since $\nu(\mu)\sim \int |v-u|\mu \mathrm{d}v\sim (1+|v|)\rho_0(t,x)\geq 2\nu_0>0$, we see that
\begin{align*}
\int_{0}^{t} \exp\Big\{-\frac{1}{\varepsilon}\int_{s}^{t}\nu(\tau)\mathrm{d}\tau\Big\}\mathrm{d}s
\leq &\,e^{-\frac{\nu_0}{\e}(t-s)} \int_{0}^{t} \exp\Big\{-\frac{1}{2\varepsilon}\int_{s}^{t}\nu(\tau)\mathrm{d}\tau\Big\}\mathrm{d}s\\
\leq &\, e^{-\frac{\nu_0}{\e}(t-s)} \int_{0}^{t} \exp\Big\{-\frac{1}{2\varepsilon}\int_{s}^{t}\nu(\tau)\mathrm{d}\tau\Big\}
\frac{\nu}{2\e}\cdot\frac{2\e}{\nu}\mathrm{d}s\\
\leq &\, C \e (1+|v|)^{-1}e^{-\frac{\nu_0}{\e}(t-s)}.
\end{align*}

Now, we handle the terms on the right-hand side of \eqref{h-formula}.

For the sixth line in \eqref{h-formula}, by using 
\eqref{D2phi}, it can be controlled by
$C \e e^{-\frac{\nu_0 t}{2\e}}\sup_{0\leq s\leq t}\big\{e^{\frac{\nu_0 s}{2\e}}\|h(s)\|_\infty\big\}$. 

For the seventh line in \eqref{h-formula}, we investigate $\|\nabla_x \phi_R\|_\infty$ as follows. 
Recall the equation \eqref{psi_R} for $\psi_R=\e^k \phi_R$, it is equivalent to
\begin{align}\label{psiR-modified}
  -(H(\e)-\Delta)\psi_R=H(\e)(e^{\psi_R}-1-\psi_R)+H(\e)-T_{2k-1}(H(\e))
  -\int_{\mathbb{R}^3}\e^k R\, \mathrm{d}v.
\end{align}
Noting that $\phi_0,\ldots, \phi_{2k-1}$ are smooth functions satisfying 
$$\sup_{t\leq \e^{-m}}\|\phi_0+\cdots+ \e^{2k-1}\phi_{2k-1}\|_\infty\leq C+C\e\sup_{t\leq \e^{-m}}\mathcal{I}_1(t,\e)\leq C,$$
thus $H(\e)-\Delta$ is an elliptic pseudo-differential operator. From Lax-Milgram's Theorem, the inverse operator $(H(\e)-\Delta)^{-1}:L^2\rightarrow H^1$ exists as a bounded linear operator, thus $(H(\e)-\Delta)^{-1}\in \Psi^{-2}$ is a parametrix of $H(\e)-\Delta$. For any $p>3$, we have
\begin{align*}
\|\psi_R\|_{W^{2,p}(\mathbb{R}^3)}\leq C \|(H(\e)-\Delta)\psi_R\|_{L^p(\mathbb{R}^3)}.
\end{align*}
Recall \eqref{psiR-e} that $\|\psi_R\|_\infty\leq C \e^k$. By Taylor's formula, 
for each $x\in \mathbb{R}^3$, there exists a function $\psi_\xi\in L^\infty(\mathbb{R}^3)$ with $\|\psi_\xi\|_\infty\leq C \e^k$ such that
$e^{\psi_R}-1-\psi_R=\frac{e^{\psi_\xi}}{2}\psi_R^2$.
By interpolation, $H(\e)\in L^1\cap L^\infty\subset L^p$, we also notice by \eqref{H-e} that
$\|H(\e)-T_{2k-1}(H(\e))\|_{L^p}\leq C \e^{k+\frac{1}{2}}$.
Hence we get
\begin{align*}
\|\psi_R\|_{W^{2,p}(\mathbb{R}^3)}\leq&\, C \Big\|H(\e)(e^{\psi_R}-1-\psi_R)+H(\e)-T_{2k-1}(H(\e))
  -\int_{\mathbb{R}^3}\e^k R\, \mathrm{d}v\Big\|_{L^p(\mathbb{R}^3)}\\
  \leq &\, C \Big\|H(\e)\frac{e^{\psi_\xi}}{2}\psi_R^2\Big\|_{L^p(\mathbb{R}^3)}
  +C\|H(\e)-T_{2k-1}(H(\e))\|_{L^p(\mathbb{R}^3)}
  +C\e^k\Big\|\int_{\mathbb{R}^3} R\, \mathrm{d}v\Big\|_{L^p(\mathbb{R}^3)}\\
  \leq &\, C\e^{2k}\|H(\e)\|_{L^p(\mathbb{R}^3)}+C \e^{k+\frac{1}{2}}
  +C\e^k\Big\|\int_{\mathbb{R}^3} h \frac{\sqrt{\mu_M}}{w} \mathrm{d}v\Big\|_{L^p(\mathbb{R}^3)}\\
  \leq &\, C\e^{2k}+C \e^{k+\frac{1}{2}}+C\e^k \|h\|_{L^\infty(\mathbb{R}^3\times \mathbb{R}^3)},
\end{align*}
and thus for some $0<\bar{\alpha}<1$,
\begin{align}\label{DphiR-Linfty}
  \|\nabla_x \phi_R\|_{L^\infty}\leq \|\phi_R\|_{C^{1,\bar{\alpha}}}\leq C \|\phi_R\|_{W^{2,p}}\leq 
  C \e^{\frac{1}{2}}+ C\|h\|_{L^\infty}.
\end{align}
It implies that the seventh line in \eqref{h-formula} is controlled by $$C\e^{\frac{3}{2}}+ C\e e^{-\frac{\nu_0 t}{2\e}}\sup_{0\leq s\leq t}\big\{e^{\frac{\nu_0 s}{2\e}}\|h(s)\|_\infty\big\}.$$

For  the other terms on the right-hand side of \eqref{h-formula},  their $L^\infty$ estimates  are similar to \cite{Juhi} (see  pp. 489--493), where $L^\infty$ interplays with $L^2$. For brevity, we omit the details here. 

In summary, 
we obtain, under the  assumption \eqref{assume}, that
\begin{align*}
\sup_{0\leq s\leq T_0}\big\{e^{\frac{\nu_0 s}{2\e}}\|h(s)\|_\infty\big\}
\leq C\|h_0\|_\infty+\frac{C_N}{\e^{\frac{3}{2}}}e^{\frac{\nu_0 T_0}{2\e}}\sup_{0\leq s\leq T}\|f(s)\|_0+ C \e^{\frac{3}{2}}e^{\frac{\nu_0 T_0}{2\e}}+C \e^{k}e^{\frac{\nu_0 T_0}{2\e}}.
\end{align*}
Letting $s=T_0$ and multiplying the result by $\e^{\frac{3}{2}}e^{-\frac{\nu_0 T_0}{2\e}}$, for sufficiently small $\e$, we deduce that
\begin{align}\label{order-3/2}
  \e^{\frac{3}{2}}\|h(T_0)\|_\infty\leq \frac{1}{2}\|\e^{\frac{3}{2}}h(0)\|_\infty+C \sup_{0\leq s \leq T}\|f(s)\|_0+C\e^3,
\end{align}
since $k\geq 6$. Thus, the proof of Lemma \ref{lemma-h-Linfty} is complete.
\end{proof}

\subsection{$W^{1, \infty}$ Estimates}
To confirm the assumption \eqref{assume}, we further investigate the $L^\infty$ norm of $D_x h$. Take $D_x$ of \eqref{weighted remainder eqn} to get
\begin{align}\label{dx-weighted-remainder}
\Big\{\partial_t+v\cdot \nabla_x-\nabla_x \phi \cdot \nabla_v+ \frac{\nu(\mu)}{\varepsilon}\Big\}(D_xh)
=& \,\,\nabla_x(D_x\phi)\cdot \nabla_v h-\frac{D_x \nu(\mu)}{\e}h\nonumber\\
&- \frac{1}{\varepsilon}D_x(K_{M,w}h)+\frac{\varepsilon^{k-1} w}{\sqrt{\mu_M}}D_x
\Big(Q\Big(\frac{h\sqrt{\mu_M}}{w},\frac{h\sqrt{\mu_M}}{w}\Big)\Big)
\nonumber\\
&+\sum_{i=1}^{2k-1}\frac{\e^{i-1}w}{\sqrt{\mu_M}}D_x\Big\{
Q\Big(F_i, \frac{h\sqrt{\mu_M}}{w}\Big)+ Q\Big( \frac{h\sqrt{\mu_M}}{w},F_i\Big)\Big\}\nonumber\\
&+D_x \Big(\nabla_x \phi \cdot \frac{w}{\sqrt{\mu_M}}\nabla_v \Big(\frac{\sqrt{\mu_M}}{w}\Big)h\Big)\nonumber\\
&+D_x\Big(\nabla_x \phi_R \cdot \frac{w}{\sqrt{\mu_M}}\nabla_v \Big(\mu+\sum_{i=1}^{2k-1}\e^iF_i\Big)\Big)\nonumber\\
&+\varepsilon^{k-1} \frac{w}{\sqrt{\mu_M}}D_x A.
\end{align}
\begin{lemma}\label{lemma-Dh-Linfty}
For $T_0$ obtained in Lemma \ref{lemma-h-Linfty}, and for all sufficiently small $\e>0$, it holds 
\begin{align*}
\sup_{0\leq s\leq T_0}\{\e^5 \|D_{x,v}h(s)\|_\infty\}\leq & \, C\e^5\{\|D_{x,v}h(0)\|_\infty+\|(1+|v|)h(0)\|_\infty\}
+C\e^2 \|h(0)\|_\infty\\
&+C\e^{\frac{1}{2}}\sup_{0\leq s\leq T}\|f(s)\|_0+C \e^{\frac{7}{2}},\\
\e^5 \|D_{x,v}h(T_0)\|_\infty\leq & \, \frac{1}{2}\e^5\{\|D_{x,v}h(0)\|_\infty+\|(1+|v|)h(0)\|_\infty\}
+\frac{1}{2}\e^2 \|h(0)\|_\infty\\
&+C\e^{\frac{1}{2}}\sup_{0\leq s\leq T}\|f(s)\|_0+C \e^{\frac{7}{2}}.
\end{align*}
\end{lemma}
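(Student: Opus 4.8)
The plan is to run the characteristic argument of Lemma \ref{lemma-h-Linfty} one derivative higher. Applying $D_x$ to \eqref{weighted remainder eqn} gives \eqref{dx-weighted-remainder}, and applying $D_v$ gives the analogous equation which, in addition, carries the source $\nabla_x h$ coming from $D_v(v\cdot\nabla_x h)$ together with a singular term $\varepsilon^{-1}(D_v\nu(\mu))h$. I would integrate both equations along the characteristics $[X(\tau),V(\tau)]$ of Lemma \ref{1} exactly as in \eqref{h-formula}, using the damping $\exp\{-\tfrac1\varepsilon\int_s^t\nu(\tau)\,\mathrm d\tau\}$ and $\nu(\mu)\gtrsim(1+|v|)$ to gain one power of $\varepsilon$ from every time integral. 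Differentiating the datum and the damping factor along the flow and invoking \eqref{DxX} and \eqref{3-shi}, the initial contribution is bounded by $C(\|D_{x,v}h(0)\|_\infty+\|(1+|v|)h(0)\|_\infty)$. It then remains to estimate the Duhamel terms one by one: all terms that are $x$- or $v$-derivatives of those already treated in \eqref{h-formula} are handled as in Guo--Jang \cite{Juhi} (pp.~489--493) with the weight $w$; in particular the nonlocal term $\tfrac1\varepsilon D_x(K_{M,w}h)$ is handled by a second iteration of the mild formula together with the change-of-variables bounds \eqref{3-shi}--\eqref{4-shi}, which converts it into an $L^2$ quantity $\lesssim\|f\|_0$ plus a remainder absorbable into $\|D_{x,v}h\|_\infty$ with a small coefficient. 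Only the genuinely new, potential-dependent sources $\nabla_x(D_x\phi)\cdot\nabla_v h$ and $D_{x,v}\big(\nabla_x\phi_R\cdot\tfrac{w}{\sqrt{\mu_M}}\nabla_v(\mu+\sum_i\varepsilon^iF_i)\big)$ need to be spelled out, and for these I first need a new elliptic estimate on $D^2_{xx}\phi_R$.

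To produce that estimate I would argue that, for $0<\alpha<1$ and small $\varepsilon$,
\begin{align*}
\|D^2_{xx}\phi_R\|_\infty\le\|\phi_R\|_{C^{2,\alpha}}\le C\varepsilon^{\frac12}+C\|h\|_{W^{1,\infty}}.
\end{align*}
Indeed, rewrite \eqref{psi_R} for $\psi_R=\varepsilon^k\phi_R$ as $-(H(\varepsilon)-\Delta)\psi_R=H(\varepsilon)(e^{\psi_R}-1-\psi_R)+(H(\varepsilon)-T_{2k-1}(H(\varepsilon)))-\int_{\mathbb R^3}\varepsilon^kR\,\mathrm dv$; since $\|\phi\|_\infty=o(1)$ this is uniformly elliptic and its parametrix $(H(\varepsilon)-\Delta)^{-1}\in\Psi^{-2}$ maps $C^{0,\alpha}\to C^{2,\alpha}$ with norm uniform in $\varepsilon$ and $t\le\varepsilon^{-m}$. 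By Proposition \ref{prop1}, $\|\psi_R\|_\infty\le C\varepsilon^k$, so $e^{\psi_R}-1-\psi_R=\tfrac12 e^{\psi_\xi}\psi_R^2$ with $\|\psi_\xi\|_\infty\le C\varepsilon^k$, whence $\|H(\varepsilon)(e^{\psi_R}-1-\psi_R)\|_{C^{0,\alpha}}\le C\varepsilon^k\|\psi_R\|_{C^{0,\alpha}}\le C\varepsilon^{2k}(\varepsilon^{\frac12}+\|f\|_0)$ by \eqref{psiR-e}; by \eqref{dxH-e}, $\|H(\varepsilon)-T_{2k-1}(H(\varepsilon))\|_{C^{0,\alpha}}\le C\|H(\varepsilon)-T_{2k-1}(H(\varepsilon))\|_{W^{1,\infty}}\le C\varepsilon^{k+\frac12}$; and since $R=\tfrac{\sqrt{\mu_M}}{w}h$ with $\beta\ge\tfrac72$, $\big\|\int_{\mathbb R^3}R\,\mathrm dv\big\|_{C^{0,\alpha}}\le C\big\|\int_{\mathbb R^3}R\,\mathrm dv\big\|_{W^{1,\infty}_x}\le C\|h\|_{W^{1,\infty}}$. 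Dividing by $\varepsilon^k$ and applying the parametrix gives the displayed bound.

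With this I would carry out the term-by-term estimates while tracking powers of $\varepsilon$. In $\nabla_x(D_x\phi)\cdot\nabla_v h=D^2_{xx}\phi\cdot\nabla_v h$, split $D^2_{xx}\phi=D^2_{xx}\phi_0+\sum_{i\ge1}\varepsilon^iD^2_{xx}\phi_i+\varepsilon^kD^2_{xx}\phi_R$: the first two groups are controlled by $C+C\varepsilon\mathcal I_1(t,\varepsilon)$ and, after absorbing one $\varepsilon$ from the time integral, contribute small-coefficient multiples of $\|\nabla_v h\|_\infty$; the third uses the elliptic bound above together with \eqref{DphiR-Linfty}, producing a term $\le C\varepsilon^{\frac{13}{2}}$ plus one with coefficient $O(\varepsilon)$ times $\varepsilon^5\|D_{x,v}h\|_\infty$, which for small $\varepsilon$ is absorbed to the left; the source $D_{x,v}\big(\nabla_x\phi_R\cdot\tfrac{w}{\sqrt{\mu_M}}\nabla_v(\mu+\sum_i\varepsilon^iF_i)\big)$ is treated identically, its $D_x\nabla_x\phi_R=D^2_{xx}\phi_R$ piece again yielding $C\varepsilon^{\frac{13}{2}}$ and an absorbable remainder. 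The singular contributions $\varepsilon^{-1}D_{x,v}\nu(\mu)\cdot h$ and the nonlocal $K$-term, after multiplying the whole inequality by $\varepsilon^5$, give a term $C\varepsilon^2\|h\|_\infty$ and $\|f\|_0$-contributions; the coefficient term $\varepsilon^{k-1}D_xA$ with $A$ as in \eqref{A} contributes $C\mathcal I_2(t,\varepsilon)\varepsilon^{k-1}\le C\varepsilon^{\frac72}$ by $t\le\varepsilon^{-m}$ and $0<m<\tfrac12\tfrac{2k-3}{2k-2}$. Collecting everything after the $\varepsilon^5$-normalization gives
\begin{align*}
\varepsilon^5\|D_{x,v}h(T_0)\|_\infty\le\tfrac12\varepsilon^5\big(\|D_{x,v}h(0)\|_\infty+\|(1+|v|)h(0)\|_\infty\big)+C\varepsilon^2\|h\|_\infty+C\varepsilon^{\frac12}\|f\|_0+C\varepsilon^{\frac72},
\end{align*}
and replacing $\|h\|_\infty$ via Lemma \ref{lemma-h-Linfty} through $\varepsilon^2\|h\|_\infty=\varepsilon^{\frac12}\cdot\varepsilon^{\frac32}\|h\|_\infty\le\tfrac12\varepsilon^2\|h(0)\|_\infty+C\varepsilon^{\frac12}\|f\|_0+C\varepsilon^{\frac72}$ yields the contraction estimate at $T_0$; running the same argument on $[0,s]$, $s\le T_0$, gives the supremum version. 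The hard part will be the nonlocal term $\tfrac1\varepsilon K_{M,w}(D_xh)$ — controlling it needs a second Duhamel iteration exploiting the Jacobian estimates \eqref{3-shi}--\eqref{4-shi} to trade the $L^\infty$-loss for an $L^2$ gain — together with the self-consistency of the elliptic estimate for $D^2_{xx}\phi_R$, whose right-hand side contains the quantity $\|h\|_{W^{1,\infty}}$ being bounded; it is precisely this feedback loop that forces the normalizing power $\varepsilon^5$ and the choice $k\ge6$.
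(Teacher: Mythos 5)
Your proposal follows essentially the same route as the paper: differentiate \eqref{weighted remainder eqn} in $x$ and $v$, integrate along the characteristics of Lemma \ref{1}, and supply the new ingredient the paper needs, namely the Hölder-elliptic estimate $\|D^2_{xx}\phi_R\|_\infty\le C\e^{1/2}+C\|h\|_{W^{1,\infty}}$ (the paper's \eqref{DDphiR-Linfty}), obtained by applying the $\Psi^{-2}$ parametrix $(H(\e)-\Delta)^{-1}:C^{0,\alpha}\to C^{2,\alpha}$ to \eqref{psiR-modified} and using \eqref{H-e}, \eqref{dxH-e} for the Taylor-remainder source; the remaining Duhamel terms, the second iteration for $\frac1\e K_{M,w}$, the $\e^5$-normalization, and the substitution of $\e^2\|h\|_\infty$ via Lemma \ref{lemma-h-Linfty} all mirror the paper's argument. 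The only cosmetic difference is that you control $\|H(\e)(e^{\psi_R}-1-\psi_R)\|_{C^{0,\alpha}}$ through the $H^2\hookrightarrow C^{0,\alpha}$ embedding and \eqref{psiR-e}, whereas the paper routes it through $W^{1,\infty}$ and \eqref{DphiR-Linfty}; both yield a bound of size $O(\e^{2k})$ that is harmless, so the proofs coincide in substance.
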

\begin{proof}
 According to \eqref{D2phi}, we see that $\|D_x^2 \phi\|_\infty\leq C$. Along the trajectory, the contribution of the fourth line on the right-hand side of \eqref{dx-weighted-remainder} is 
$$C\e e^{-\frac{\nu_0 t}{2\e}}\sup_{0\leq s\leq t}\{e^{\frac{\nu_0 s}{2\e}}\|h\|_{W^{1,\infty}}\}.$$
We next estimate $\|D^2_x \phi_R\|_\infty$ to deal with the fifth line on the right-hand side of \eqref{dx-weighted-remainder}.

Recall the equation \eqref{psiR-modified} for $\psi_R:=\e^k \phi_R$,
\begin{align*}
  -(H(\e)-\Delta)\psi_R=H(\e)(e^{\psi_R}-1-\psi_R)+H(\e)-T_{2k-1}(H(\e))
  -\int_{\mathbb{R}^3}\e^k R\, \mathrm{d}v,
\end{align*}
where $H(\e)$ is defined in \eqref{Hee}.
 Since $(H(\e)-\Delta)^{-1}\in \Psi^{-2}$, we have for any $0<\tilde{\alpha}<1$, 
\begin{align*}
\|\psi_R\|_{C^{2,\tilde{\alpha}}}
\leq &\,C \Big\|H(\e)(e^{\psi_R}-1-\psi_R)+H(\e)-T_{2k-1}(H(\e))
  -\int_{\mathbb{R}^3}\e^k R\, \mathrm{d}v\Big\|_{C^{0,\tilde{\alpha}}}\\
\leq &\, C \|H(\e)(e^{\psi_R}-1-\psi_R)\|_{W^{1,\infty}}
+C\|H(\e)-T_{2k-1}(H(\e))\|_{W^{1,\infty}}+C\Big\|\int_{\mathbb{R}^3}\e^k R\, \mathrm{d}v\Big\|_{W^{1,\infty}}. 
\end{align*} 
By \eqref{psiR-e}: $\|\psi_R\|_\infty\leq C \e^{k}$ and \eqref{DphiR-Linfty}:
$\|\nabla_x \psi_R\|_\infty\leq C \e^{k+\frac{1}{2}}+C\e^k \|h\|_\infty$, it holds
\begin{align*}
  &\|H(\e)(e^{\psi_R}-1-\psi_R)\|_{W^{1,\infty}}\\
  =&\,\|H(\e)(e^{\psi_R}-1-\psi_R)\|_\infty
  +\|\nabla_x H(\e)(e^{\psi_R}-1-\psi_R)\|_\infty
  +\|H(\e)(e^{\psi_R}-1)\nabla_x \psi_R\|_\infty\\
  \leq &\, \frac{1}{2}(\|H(\e)\|_\infty+\|\nabla_xH(\e)\|_\infty)\big\|e^{|\psi_R|}
  \psi_R^2\big\|_\infty+
  \|H(\e)\|_\infty \|\nabla_x \psi_R\|_\infty\|e^{|\psi_R|}
  \psi_R\|_\infty\\
  \leq &\,C \e^{2k}+C\e^{2k}(\e^{\frac{1}{2}}+\|h\|_\infty)\leq C \e^{2k}(1+\|h\|_\infty). 
\end{align*}
From the crucial estimates \eqref{H-e} and \eqref{dxH-e}, we get $\|H(\e)-T_{2k-1}(H(\e))\|_{W^{1,\infty}}\leq C \e^{k+\frac{1}{2}}$.
Hence we arrive at
\begin{align*}
\|\psi_R\|_{C^{2,\tilde{\alpha}}}
\leq & C \|H(\e)(e^{\psi_R}-1-\psi_R)\|_{W^{1,\infty}}
+C\|H(\e)-T_{2k-1}(H(\e))\|_{W^{1,\infty}}+C\Big\|\int_{\mathbb{R}^3}\e^k R\, \mathrm{d}v\Big\|_{W^{1,\infty}}\\
\leq & C \e^{2k}(1+\|h\|_\infty)+C \e^{k+\frac{1}{2}}+ C\e^k (\|h\|_{\infty}+\|D_x h\|_\infty),
\end{align*} 
which implies that
\begin{align}\label{DDphiR-Linfty}
\|D_x^2 \phi_R\|_\infty\leq \|\phi_R\|_{C^{2,\tilde{\alpha}}}
\leq & C \e^{k}\|h\|_\infty+C \e^{\frac{1}{2}}+ C (\|h\|_{\infty}+\|D_x h\|_\infty)\leq C \e^{\frac{1}{2}}+ C (\|h\|_{\infty}+\|D_x h\|_\infty).
\end{align} 
Therefore, the contribution of the fifth line on the right-hand side of \eqref{dx-weighted-remainder} is
$$C\e^{\frac{3}{2}}+ C\e e^{-\frac{\nu_0 t}{2\e}}\sup_{0\leq s\leq t}\big\{e^{\frac{\nu_0 s}{2\e}}(\|h\|_{\infty}+\|D_x h\|_\infty)\big\}.$$

The $W^{1,\infty}$ estimates of the other terms on the right-hand side of \eqref{dx-weighted-remainder} are similar to \cite{Juhi} (see pp. 494--499),
where Lemma \ref{1} plays a key role. Here we omit the details for brevity.

In summary, we have
\begin{align*}
\sup_{0\leq s\leq T_0}\big\{e^{\frac{\nu_0 s}{2\e}}\|D_x h(s)\|_\infty\big\}
\leq &\,C \|(1+|v|)h(0)\|_\infty+C \|D_xh(0)\|_\infty\\
& +C\e \sup_{0\leq s\leq T_0}\big\{e^{\frac{\nu_0 s}{2\e}}\|D_v h(s)\|_\infty\big\}+\frac{C}{\e^3}\sup_{0\leq s\leq T_0}\big\{e^{\frac{\nu_0 s}{2\e}}\| h(s)\|_\infty\big\}\\
&+ (C\e+o(1))\sup_{0\leq s\leq T_0}\big\{e^{\frac{\nu_0 s}{2\e}}\|D_x h(s)\|_\infty+e^{\frac{\nu_0 s}{2\e}}\|(1+|v|) h(s)\|_\infty\big\}\\
&+ C\e^k \sup_{0\leq s\leq T_0}\big\{\big(e^{\frac{\nu_0 s}{2\e}}\|D_x h(s)\|_\infty\big)^2+\big(e^{\frac{\nu_0 s}{2\e}}\|(1+|v|) h(s)\|_\infty\big)^2\big\}\\
&+ \frac{C}{\e^4}e^{\frac{\nu_0 T_0}{2\e}}\sup_{0\leq s\leq T}\|f(s)\|_0+C e^{\frac{\nu_0 T_0}{2\e}}\e^{\frac{3}{2}}.
\end{align*}
Moreover, to close the estimates of $\|D_v h\|_\infty$, we deduce that
\begin{align}\label{order-5}
\sup_{0\leq s\leq T_0}\big\{e^{\frac{\nu_0 s}{2\e}}\|D_{x,v} h(s)\|_\infty\big\}
\leq &\,C \|(1+|v|)h(0)\|_\infty+C \|D_{x,v}h(0)\|_\infty\nonumber\\
&+ (C\e+o(1))\sup_{0\leq s\leq T_0}\big\{e^{\frac{\nu_0 s}{2\e}}\|D_{x,v} h(s)\|_\infty+e^{\frac{\nu_0 s}{2\e}}\|(1+|v|) h(s)\|_\infty\big\}\nonumber\\
&+ C\e^k \sup_{0\leq s\leq T_0}\big\{\big(e^{\frac{\nu_0 s}{2\e}}\|D_{x,v} h(s)\|_\infty\big)^2+\big(e^{\frac{\nu_0 s}{2\e}}\|(1+|v|) h(s)\|_\infty\big)^2\big\}\nonumber\\
&+\frac{C}{\e^3}\sup_{0\leq s\leq T_0}\big\{e^{\frac{\nu_0 s}{2\e}}\| h(s)\|_\infty\big\}+ \frac{C}{\e^4}e^{\frac{\nu_0 T_0}{2\e}}\sup_{0\leq s\leq T}\|f(s)\|_0+C e^{\frac{\nu_0 T_0}{2\e}}\e^{\frac{3}{2}}.
\end{align}
Letting $s=T_0$ and multiplying the result by $\e^{5}e^{-\frac{\nu_0 T_0}{2\e}}$, for sufficiently small $\e$, we deduce that
\begin{align*}
\e^5 \|D_{x,v}h(T_0)\|_\infty\leq & \, \frac{1}{2}\e^5\{\|D_{x,v}h(0)\|_\infty+\|(1+|v|)h(0)\|_\infty\}
+\frac{1}{2}\e^2 \|h(0)\|_\infty\\
&+C\e^{\frac{1}{2}}\sup_{0\leq s\leq T}\|f(s)\|_0+C \e^{\frac{7}{2}}.
\end{align*}
Thus, the proof of Lemma \ref{lemma-Dh-Linfty} is complete.
\end{proof}
Combining Lemma \ref{lemma-h-Linfty} and Lemma \ref{lemma-Dh-Linfty} up, by bootstrapping the time interval into each given time $T\leq \e^{-m}$, $0<m<\frac{1}{2}$, we deduce the following
\begin{proposition}
  Given $0<T\leq \e^{-m}$, $0<m<\frac{1}{2}$. Assume  that \eqref{assume} holds. For any sufficiently small $\e$, there exists a constant $C>0$ independent of $T$ and $\e$ such that
\begin{align}
&\sup_{0\leq s\leq T}\big\{\e^{\frac{3}{2}}\|h(s)\|_\infty\big\}\leq C \|\e^{\frac{3}{2}}h_0\|_\infty+C \sup_{0\leq s \leq T}\|f(s)\|_0+C\e^3,\label{high}\\
&\sup_{0\leq s\leq T}\big\{\e^{\frac{3}{2}}\|(1+|v|)h(s)\|_\infty+\e^5 \|D_{x,v}h(s)\|_\infty\big\}\nonumber\\
&\quad\leq C \e^{\frac{3}{2}}\|(1+|v|)h_0\|_\infty+C\e^5\| D_{x,v}h_0\|_\infty
+C \sup_{0\leq s \leq T}\|f(s)\|_0+C\e^3.\label{nabla-high}
\end{align} 
\end{proposition}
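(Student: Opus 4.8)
The plan is a continuation (time-bootstrap) argument: we split $[0,T]$ into finitely many consecutive intervals of the \emph{fixed} length $T_0$ furnished by Lemma \ref{1}, apply Lemmas \ref{lemma-h-Linfty} and \ref{lemma-Dh-Linfty} on each of them after translating the time origin, and use the contraction constant $\tfrac12$ appearing in \eqref{order-3/2} and \eqref{order-5} to sum the resulting recursions as convergent geometric series. The point that makes this legitimate is that $T_0$ and all constants in Lemmas \ref{lemma-h-Linfty}--\ref{lemma-Dh-Linfty} are independent of the subinterval and of $\e$: this is because the coefficient bounds $\|\phi_i\|_{H^s}\le C(1+t)^{i-1}$ of Proposition \ref{propa}, the induced smallness $\e\,\mathcal I_1(t,\e)=o(1)$, the controlled size of $\mathcal I_2(t,\e)\,\e^{k-1}$ and of the term $A$ in \eqref{A}, and the elliptic bound $\|\phi\|_{C^{2,\alpha}}\le C$ of \eqref{D2phi} (which is what fixes $T_0$ in Lemma \ref{1} independently of $\e$), all hold uniformly for $t\le\e^{-m}$ with $m<\tfrac12$. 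Set $t_j=jT_0$ and let $N$ be the smallest integer with $t_N\ge T$; note $N\sim T/T_0\le\e^{-m}/T_0$ is large, but the halving at each step absorbs this.

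First I would establish \eqref{high}. Writing $a_j:=\e^{3/2}\|h(t_j)\|_\infty$ and $\Phi:=\sup_{0\le s\le T}\|f(s)\|_0+\e^3$, Lemma \ref{lemma-h-Linfty} applied to the shifted remainder $\tau\mapsto h(t_j+\tau)$ gives $a_{j+1}\le\tfrac12 a_j+C\Phi$ (the source term being $\sup$ of $\|f\|_0$ over a subinterval of $[0,T]$, hence $\le\Phi$), whence $a_j\le 2^{-j}a_0+2C\Phi\le a_0+2C\Phi$ for all $j\le N$. Feeding this back into the sup-estimate of Lemma \ref{lemma-h-Linfty} over each $[t_j,t_{j+1}]$ (the last, possibly shorter, interval being treated identically) yields $\sup_{t_j\le s\le t_{j+1}}\e^{3/2}\|h(s)\|_\infty\le C a_j+C\Phi\le C(a_0+\Phi)$, and taking the supremum over $j$ proves \eqref{high}. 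Exactly the same iteration run with the polynomial weight $(1+|v|)$ inserted — which changes nothing essential in the $L^\infty$ scheme because $\mu_M$ decays faster than any polynomial and $\beta\ge\tfrac72$ leaves ample room in $w$ — gives the bound for $\e^{3/2}\|(1+|v|)h(s)\|_\infty$ in \eqref{nabla-high}.

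Next I would iterate Lemma \ref{lemma-Dh-Linfty} in the same fashion to obtain the derivative bound in \eqref{nabla-high}. Put $b_j:=\e^5\|D_{x,v}h(t_j)\|_\infty$. After translation, the endpoint estimate of Lemma \ref{lemma-Dh-Linfty} reads $b_{j+1}\le\tfrac12 b_j+\tfrac12\e^5\|(1+|v|)h(t_j)\|_\infty+\tfrac12\e^2\|h(t_j)\|_\infty+C\e^{1/2}\sup_{0\le s\le T}\|f(s)\|_0+C\e^{7/2}$. Using the bounds already obtained, $\e^5\|(1+|v|)h(t_j)\|_\infty=\e^{7/2}\bigl(\e^{3/2}\|(1+|v|)h(t_j)\|_\infty\bigr)$ and $\e^2\|h(t_j)\|_\infty=\e^{1/2}a_j$ are both $\le C\bigl(\e^{3/2}\|(1+|v|)h_0\|_\infty+\Phi\bigr)$, since $\e^{7/2},\e^{1/2}\le1$ and $\|h_0\|_\infty\le\|(1+|v|)h_0\|_\infty$; hence the recursion reduces to $b_{j+1}\le\tfrac12 b_j+C\Psi$ with $\Psi:=\e^5\|D_{x,v}h_0\|_\infty+\e^{3/2}\|(1+|v|)h_0\|_\infty+\sup_{0\le s\le T}\|f(s)\|_0+\e^3$, so $b_j\le b_0+2C\Psi\le C\Psi$ for all $j$. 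The sup-estimate of Lemma \ref{lemma-Dh-Linfty} over each $[t_j,t_{j+1}]$ then upgrades this to $\sup_{t_j\le s\le t_{j+1}}\e^5\|D_{x,v}h(s)\|_\infty\le C\Psi$, and combining with the weighted $L^\infty$ bound from the previous paragraph gives \eqref{nabla-high}.

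The main obstacle, and the point to be careful about, is precisely the $j$-uniformity (equivalently, the $t$-uniformity on $[0,\e^{-m}]$) of the contraction coefficient $\tfrac12$, the length $T_0$, and every multiplicative constant appearing in Lemmas \ref{lemma-h-Linfty} and \ref{lemma-Dh-Linfty}; without it the geometric summation over the $\sim\e^{-m}$ steps would collapse. This uniformity is exactly what Proposition \ref{propa}, the bounds on $\mathcal I_1,\mathcal I_2$ and on $A$ for $t\le\e^{-m}$ (with $m<\tfrac12$, and $m<\tfrac12\tfrac{2k-3}{2k-2}$ for the contribution of $A$), and the $\e$-uniform elliptic estimate \eqref{D2phi} are designed to provide; a secondary, routine point is to check that the weighted $L^\infty$ bound carrying the factor $(1+|v|)$ really closes, which is where $\beta\ge\tfrac72$ enters. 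Throughout we invoke the standing hypothesis \eqref{assume}; its own verification is postponed to Section \ref{sec5}, where \eqref{high}--\eqref{nabla-high} are combined with the $L^2$ bound \eqref{low} to close the full $L^2$--$L^\infty$ scheme.
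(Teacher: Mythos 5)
Your proposal is correct and is essentially the same argument the paper intends: the paper states the proposition with only the single phrase ``Combining Lemma \ref{lemma-h-Linfty} and Lemma \ref{lemma-Dh-Linfty} up, by bootstrapping the time interval into each given time $T\leq \e^{-m}$,'' and your iteration on subintervals of fixed length $T_0$, using the endpoint estimates with coefficient $\tfrac12$ to close geometric recursions and the sup-estimates to fill in between endpoints, is precisely that bootstrap. You also correctly identify the essential uniformity issue (that $T_0$ and all the constants of Lemmas \ref{lemma-h-Linfty}--\ref{lemma-Dh-Linfty} are independent of $\e$ and of the subinterval, on the strength of Proposition \ref{propa}, the controls on $\mathcal I_1,\mathcal I_2$ and $A$ for $t\le\e^{-m}$, and the elliptic bound \eqref{D2phi}) and the implicit step that the unweighted $L^\infty$ scheme of Lemma \ref{lemma-h-Linfty} carries over verbatim to $(1+|v|)h$ using the slack in $\beta\ge\tfrac72$, which is what produces the weighted $\|(1+|v|)h_0\|_\infty$ data appearing in \eqref{nabla-high} and inside Lemma \ref{lemma-Dh-Linfty}.
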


\section{$L^2$--$L^\infty$ Estimates of the Remainder $R$}\label{sec5}
Plugging \eqref{high} into \eqref{low}, we obtain
\begin{align*}
&\frac{\mathrm{d}}{\mathrm{d}t}\int_{\mathbb{R}^3}\e^{-2k}H(\e)(\psi_Re^{\psi_R}-e^{\psi_R}+1) \mathrm{d}x+\frac{1}{2}\frac{\mathrm{d}}{\mathrm{d}t}(\|\nabla_x \phi_R\|_0^2 +\|\sqrt{\theta_0}f\|_0^2)+\frac{\delta_0}{2\e}\theta_M \|(\mathbf{I-P})f\|^2_\nu\nonumber\\
\leq &\, C\Big(\frac{1}{(1+t)^{\frac{16}{15}}} +\e \mathcal{I}_1(t,\e)+\e^{\frac{1}{2}}+\mathcal{I}_2(t,\e) \e^{k-1}\Big)(\|\phi_R\|_0^2
+\|\nabla_x\phi_R\|_0^2+\|f\|_0^2+1)
\\
&\,+ C\e^{\frac{1}{2}}[\e^{\frac{3}{2}}\|h\|_\infty]( \|f\|_0+\e^{k-3} \|f\|_0^2
+\e^{k-2}\|f\|_0\|\nabla_x\phi_R\|_0 )\\
\leq &\, C\Big(\frac{1}{(1+t)^{\frac{16}{15}}} +\e \mathcal{I}_1(t,\e)+\e^{\frac{1}{2}}+\mathcal{I}_2(t,\e) \e^{k-1}\Big)(\|\phi_R\|_0^2
+\|\nabla_x\phi_R\|_0^2+\|f\|_0^2+1)
\\
&\,+ C\e^{\frac{1}{2}}\Big[\e^{\frac{3}{2}}\|h_0\|_\infty+\sup_{0\leq s\leq T}\|f(s)\|_0+\e^3\Big]
( \|f\|^2_0+\|\nabla_x\phi_R\|^2_0+1)\\
\leq &\, C\Big(\frac{1}{(1+t)^{\frac{16}{15}}} +\e \mathcal{I}_1(t,\e)+\e^{\frac{1}{2}}+\mathcal{I}_2(t,\e) \e^{k-1}+\e^{\frac{1}{2}}\Big[\e^{\frac{3}{2}}\|h_0\|_\infty+\sup_{0\leq s\leq T}\|f(s)\|_0+\e^3\Big]\Big)\\
&\,\times  (\|\phi_R\|_0^2
+\|\nabla_x\phi_R\|_0^2+\|f\|_0^2+1).
\end{align*}

It is crucial to notice that 
$$3x^2\geq xe^x-e^x+1\geq \frac{1}{12}x^2,\quad \mathrm{for}\,\,|x|\leq \ln 6.$$
According to \eqref{psiR-e}, it holds
$\|\psi_R\|_\infty\ll 1$, and there exists two constants $C_1,C_2>0$ such that
\begin{align}\label{C1C2}
C_1 \e^{2k}\|\phi_R\|^2_0
&\geq 3\int_{\mathbb{R}^3} H(\e) |\psi_R|^2 \mathrm{d}x
\geq \int_{\mathbb{R}^3}H(\e)(\psi_Re^{\psi_R}-e^{\psi_R}+1) \mathrm{d}x\nonumber\\
&\geq \frac{1}{12}\int_{\mathbb{R}^3} H(\e) |\psi_R|^2 \mathrm{d}x\geq C_2 \e^{2k}\|\phi_R\|^2_0,
\end{align}
where $H(\e)\in [e^{-C-C\e\mathcal{I}_1(t,\e)}, e^{C+C\e\mathcal{I}_1(t,\e)}]$.
Therefore, it holds
\begin{align*}
\frac{\mathrm{d}}{\mathrm{d}t}&\Big\{\int_{\mathbb{R}^3}\e^{-2k}H(\e)(\psi_Re^{\psi_R}-e^{\psi_R}+1) \mathrm{d}x+\|\nabla_x \phi_R\|_0^2 +\|\sqrt{\theta_0}f\|_0^2+1\Big\}\\
\leq & \,C\Big(\frac{1}{(1+t)^{\frac{16}{15}}} +\e \mathcal{I}_1(t,\e)+\e^{\frac{1}{2}}+\e^{k-1}\mathcal{I}_2 (t,\e) +\e^{\frac{1}{2}}\big[\e^{\frac{3}{2}}\|h_0\|_\infty+\sup_{0\leq s\leq T}\|f(s)\|_0+\e^3\big]\Big)\\
&\quad\times \Big(\int_{\mathbb{R}^3}\e^{-2k}H(\e)(\psi_Re^{\psi_R}-e^{\psi_R}+1) \mathrm{d}x
+\|\nabla_x\phi_R\|_0^2+\|\sqrt{\theta_0}f\|_0^2+1\Big).
\end{align*}

By using Gronwall's inequality, we have
\begin{align*}
&\int_{\mathbb{R}^3}\e^{-2k}H(t,x,\e) [(\psi_Re^{\psi_R}-e^{\psi_R}+1)(t,\e)] \mathrm{d}x+\|\nabla_x \phi_R\|_0^2 +\|\sqrt{\theta_0}f\|_0^2+1\\
\leq &\, \Big\{\int_{\mathbb{R}^3}\e^{-2k}H(0,x,\e) [(\psi_Re^{\psi_R}-e^{\psi_R}+1)(0,\e)] \mathrm{d}x+\|\nabla_x \phi_R(0)\|_0^2 +\|\sqrt{\theta_0}f(0)\|_0^2+1\Big\}\\
& \times \exp\Big\{\int_0^t C\Big(\frac{1}{(1+t)^{\frac{16}{15}}} +\e \mathcal{I}_1(t,\e)+\e^{\frac{1}{2}}+\e^{k-1}\mathcal{I}_2 (t,\e) +C\e^{\frac{1}{2}}\Big[\e^{\frac{3}{2}}\|h_0\|_\infty+\sup_{0\leq s\leq T}\|f(s)\|_0+\e^3\Big]\Big)\mathrm{d}s\Big\}\\
\leq &\, \Big\{\int_{\mathbb{R}^3}\e^{-2k}H(0,x,\e) [(\psi_Re^{\psi_R}-e^{\psi_R}+1)(0,\e)] \mathrm{d}x+\|\nabla_x \phi_R(0)\|_0^2 +\|\sqrt{\theta_0}f(0)\|_0^2+1\Big\}\\
& \times \exp\Big\{ C +C\e t\mathcal{I}_1(t,\e)+C\e^{\frac{1}{2}}t+C \e^{k-1}t\mathcal{I}_2(t,\e)+C\e^{\frac{1}{2}}t
\Big[\e^{\frac{3}{2}}\|h_0\|_\infty+\sup_{0\leq s\leq T}\|f(s)\|_0\Big]
\Big\}.
\end{align*}
Here, as defined in \eqref{Hee},  $H(\e)=H(t,x, \e)=\exp\{\phi_0+\cdots+\e^{2k-1}\phi_{2k-1}\}$. It is clear that for $t\leq \e^{-m}$, $0<m<\frac{1}{2}$, it hold 
\begin{align*}
\mathcal{I}_1(t,\e)=&\sum_{i=1}^{2k-1}[\e(1+t)]^{i-1}
+\Big(\sum_{i=1}^{2k-1}[\e(1+t)]^{i-1}\Big)^2
\leq  C\sum_{i=1}^{2k-1}\e^{(1-m)(i-1)}\leq C,\\
\mathcal{I}_2(t,\e)=&\sum_{2k\leq i+j\leq 4k-2}\e^{i+j-2k}(1+t)^{i+j-2}
\leq C_k \e^{2k(1-m)}\e^{2m-2k}\leq C\e^{2m-2mk}.  
\end{align*}
Furthermore,
for $0\leq t \leq \e^{-m}$, $0<m\leq \frac{1}{2}\frac{2k-3}{2k-2}\,\,(<\frac{1}{2})$, 
\begin{align}\label{time}
\mathcal{I}_1(t,\e) t \e+\mathcal{I}_2(t,\e) t \e^{k-1}\leq C \e^{\frac{1}{2}-m}\leq 1.
\end{align}
It follows from \eqref{C1C2} that
\begin{align*}
\|\phi_R\|_0^2+\|\nabla_x \phi_R\|_0^2 +\|f\|_0^2+1
\leq & \big\{\|\phi_R(0)\|_0^2+\|\nabla_x \phi_R(0)\|_0^2 +\|f(0)\|_0^2+1\big\}\\
& \times C\exp\Big\{C\e^{\frac{1}{2}-m}
\Big[\e^{\frac{3}{2}}\|h_0\|_\infty+\sup_{0\leq s\leq T}\|f(s)\|_0\Big]\Big\}.
\end{align*}
For $\varepsilon$ sufficiently small, it holds
\begin{align*}
\|\phi_R(t)\|_0^2+\|\nabla_x \phi_R(t)\|_0^2 +\|f(t)\|_0^2+1
\leq &\, C\Big\{\|\phi_R(0)\|_0^2+\|\nabla_x \phi_R(0)\|_0^2 +\|f(0)\|_0^2+1\Big\}\\
& \times \Big\{1+\e^{\frac{1}{2}-m}
\Big[\e^{\frac{3}{2}}\|h_0\|_\infty+\sup_{0\leq s\leq T}\|f(s)\|_0\Big] \Big\}.
\end{align*}
Hence, there exists a constant $C$, independent of $\e$, such that
\begin{align}\label{f-DphiR-L2}
&\sup_{0 \leq t \leq \e^{-m}}\{\|\phi_R(t)\|_0+\|\nabla \phi_R(t)\|_0+\|f(t)\|_0\} \nonumber\\
&\quad\leq   C\big(1+\|\phi_R(0)\|_0+\|\nabla \phi_R(0)\|_0+\|f(0)\|_0+\varepsilon^{\frac{3}{2}}\|h_0\|_\infty\big).
\end{align}
In conclusion, combining with \eqref{nabla-high}, we have
\begin{align*}
&\sup_{0\leq t\leq \e^{-m}}\big\{\|f(t)\|_0+\|\phi_R(t)\|_0+\|\nabla \phi_R (t)\|_0+\e^{\frac{3}{2}} \|(1+|v|)h(t)\|_\infty +\e^5 \|D_{x,v}h(t)\|_\infty\big\}\\
&\quad\leq C\big( 1+ \|f(0)\|_0+\|\phi_R(0)\|_0+\|\nabla \phi_R(0)\|_0+\e^{\frac{3}{2}} \|(1+|v|)h(0)\|_\infty +\e^5 \|D_{x,v}h(0)\|_\infty\big).
\end{align*}
This completes the proof of Theorem \ref{thm1}.
\hfill $\square$

\smallskip 
\appendix
\section{Global strong solution to the ionic Vlasov-Poisson-Boltzmann system \eqref{vpb}}\label{appendixB}
In this appendix, we recall the global existence of strong solutions to the ionic Vlasov-Poisson-Boltzmann system \eqref{vpb} in $\mathbb{R}^3$ obtained by Li-Yang-Zhong \cite{Zhong2016}.

We define two norms:
\begin{align*}
&\mathcal{X}_1^2:=\{g\in L^2: \|g\|_{\mathcal{X}_1^2}<\infty\}, \quad \|g\|_{\mathcal{X}_1^2}:=\Big(\sum_{|\alpha|+|\beta|\leq 2}\|\nu \partial_x^\alpha \partial_v^\beta g\|_{L^2}^2\Big)^{\frac{1}{2}},\\
&L^{2,1}:=L^2(\mathbb{R}_v^3; L^1(\mathbb{R}_x^3)), \quad \|g\|_{L^{2,1}}:=\Big(\int_{\mathbb{R}^3}\Big(\int_{\mathbb{R}^3} |g(x,v)|\mathrm{d}x\Big)^2 \mathrm{d}v\Big)^{\frac{1}{2}}.
\end{align*}
We also define 
$M(v)=(2\pi)^{-\frac{3}{2}}e^{-\frac{|v|^2}{2}}$ and $g=\frac{F-M}{\sqrt{M}}$ with the initial datum $g^{\rm{in}}=\frac{F^{\rm{in}}-M}{\sqrt{M}}$. Then we have
\begin{proposition}[\!\cite{Zhong2016}, Theorem 2.7]
\label{result-of-LiYangZhong2016}
Assume that $g^{\rm{in}}\in \mathcal{X}_1^2\cap L^{2,1}$ and $\|g^{\rm{in}}\|_{\mathcal{X}_1^2\cap L^{2,1}}\leq \delta_0$ for some constant $\delta_0$ small enough. Then, there exists a globally unique strong solution $F(t,x,v)$ to the system \eqref{vpb} with $g=\frac{F-M}{\sqrt{M}}$
satisfying
\begin{align*}
&\|\partial_x^\alpha \mathbf{P}g(t)\|_{L^2_{x,v}}+\|\partial_x^\alpha \phi (t)\|_{H^1_x}\leq C\delta_0 (1+t)^{-\frac{3}{4}},\\
&\|\partial_x^\alpha \{\mathbf{I}-\mathbf{P}\}g(t)\|_{L^2_{x,v}}\leq C\delta_0 (1+t)^{-\frac{5}{4}},\\
&\| \{\mathbf{I}-\mathbf{P}\}g(t)\|_{\mathcal{X}_1^2}
+\|\nabla_x \mathbf{P}g(t)\|_{L^2(\mathbb{R}_v^3; H^1(\mathbb{R}_x^3))}\leq C\delta_0 (1+t)^{-\frac{5}{4}},
\end{align*}
for $|\alpha|=0,1$ and constant $C>0$.
\end{proposition}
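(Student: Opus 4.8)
The statement is a global well-posedness--plus--decay result for a Boltzmann-type equation near a global Maxwellian, so the plan is the standard Guo-type nonlinear energy method, closed by a decay analysis that exploits the extra $L^{2,1}$ bound on the data; I only indicate the architecture and flag the one genuinely ion-specific point. First I would reformulate: writing $F=M+\sqrt{M}\,g$ and linearizing both equations, $g$ solves a transport equation with the linearized collision operator $Lg$ and a field term $\sim v\cdot\nabla_x\phi\,\sqrt{M}$, while the Poisson equation becomes $(1-\Delta)\phi=\int_{\mathbb{R}^3}\sqrt{M}\,g\,\mathrm{d}v-\mathcal{R}(\phi)$ with $\mathcal{R}(\phi):=e^\phi-1-\phi=O(\phi^2)$. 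The key structural observation is that expanding $e^\phi$ produces the additional $+\phi$, so the Poisson operator is the invertible elliptic operator $1-\Delta$ rather than $-\Delta$; this simultaneously gives the full elliptic gain $\|\phi\|_{H^{s+2}}\leq C(\|\rho-1\|_{H^s}+\|\mathcal{R}(\phi)\|_{H^s})$ and, after the macroscopic projection below, a zeroth-order dissipation mechanism for the density fluctuation $a=\rho-1$ that the electronic Vlasov--Poisson--Boltzmann system (Duan--Strain) lacks.

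Next I would run the a priori energy estimate. Define $\mathcal{E}(t)=\sum_{|\alpha|+|\beta|\leq 2}\|\partial_x^\alpha\partial_v^\beta g\|_0^2+\|\phi\|_{H^3}^2$ and a dissipation rate $\mathcal{D}(t)$ collecting $\sum_{|\alpha|+|\beta|\leq 2}\|\partial_x^\alpha\partial_v^\beta(\mathbf{I}-\mathbf{P})g\|_\nu^2$, $\sum_{1\leq|\alpha|\leq 2}\|\partial_x^\alpha\mathbf{P}g\|_0^2$ and $\|\phi\|_{H^3}^2$. Taking $\partial_x^\alpha\partial_v^\beta$ of the $g$-equation, pairing with $\partial_x^\alpha\partial_v^\beta g$, and using the coercivity $\langle Lg,g\rangle\geq\delta_0\|(\mathbf{I}-\mathbf{P})g\|_\nu^2$ controls the microscopic dissipation; the macroscopic dissipation $\|\nabla_x(a,b,c)\|_0^2+\|a\|_0^2$ (with $b,c$ the momentum and temperature fluctuations) is recovered by a Kawashima-type argument, i.e.\ deriving the local balance laws for $(a,b,c)$ coupled to $\phi$ and differentiating a suitable interaction functional, the $\|a\|_0^2$ term coming precisely from $(1-\Delta)\phi=a+\cdots$. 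Every nonlinear contribution, namely $\langle\Gamma(g,g),\cdot\rangle$, the field terms $\nabla_x\phi\cdot(\cdots)$, and the $\mathcal{R}(\phi)$ term in the Poisson equation, is estimated by $C\mathcal{E}^{1/2}\mathcal{D}$ using the embedding $H^2\hookrightarrow L^\infty$ and the smallness of $\mathcal{E}$. This yields $\frac{\mathrm{d}}{\mathrm{d}t}\mathcal{E}(t)+\kappa\,\mathcal{D}(t)\leq C\mathcal{E}(t)^{1/2}\mathcal{D}(t)$, which together with a standard linearization/iteration local existence theorem and a continuity argument gives the global solution with $\sup_{t\geq0}\mathcal{E}(t)\leq C\delta_0^2$.

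For the decay rates I would bring in the $L^{2,1}$ part of the initial norm. One route is a spectral analysis of the linearized semigroup $e^{t\mathbb{L}}$ via the Fourier transform in $x$: near $\xi=0$ the relevant symbol carries the elliptic factor $1+|\xi|^2$ in place of $|\xi|^2$, producing $\|e^{t\mathbb{L}}u\|_{L^2_x}\leq C(1+t)^{-3/4}\|u\|_{L^1_x}$ on the macroscopic block and an additional $(1+t)^{-1/2}$ gain on the microscopic block; feeding this into the Duhamel representation, using $\|(\text{nonlinearity})\|_{L^{2,1}}\leq C\mathcal{E}$ and the already established uniform bound on $\mathcal{E}$, one bootstraps to the stated rates. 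Alternatively one runs a time-weighted energy estimate of Strain--Guo type, multiplying the energy inequality by $(1+t)^\sigma$ for suitable $\sigma$ and absorbing the borderline terms by the faster decay of $(\mathbf{I}-\mathbf{P})g$ and of $\nabla_x\mathbf{P}g$. Either way one obtains $(1+t)^{-3/4}$ for $\partial_x^\alpha\mathbf{P}g$ and $\partial_x^\alpha\phi$, and $(1+t)^{-5/4}$ for $\partial_x^\alpha(\mathbf{I}-\mathbf{P})g$ and for the microscopic $\mathcal{X}_1^2$-norm, $|\alpha|=0,1$.

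The hard part will be the genuinely nonlinear Poisson coupling: one must show that $\mathcal{R}(\phi)=e^\phi-1-\phi$ is a true higher-order perturbation in every single estimate, so that it is always absorbed by the smallness of $\mathcal{E}$, while at the same time harvesting the benefit of the linear $+\phi$, i.e.\ the improved elliptic regularity and the extra zeroth-order macroscopic dissipation for $a$, and making the Kawashima-type interaction functional compatible with this $\phi$-dissipation. A secondary technical point is ensuring that the spectral (or time-weighted) decay analysis correctly accounts for this modified low-frequency structure.
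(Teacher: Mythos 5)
The paper does not prove this proposition: Appendix~B recalls it verbatim as Theorem~2.7 of the cited reference \cite{Zhong2016} and uses it as a black box, so there is no in-paper argument for your attempt to be compared against. Your outline is nevertheless broadly consistent with the method of that reference: its title, \emph{Spectrum analysis and optimal decay rates of the bipolar Vlasov--Poisson--Boltzmann equations}, indicates that the $(1+t)^{-3/4}$ and $(1+t)^{-5/4}$ rates are obtained by the spectral/Fourier-in-$x$ route you list first, with the $L^{2,1}$ hypothesis feeding $L^1_x\to L^2_x$ linear semigroup bounds into a Duhamel bootstrap, rather than by the time-weighted energy route you list as an alternative. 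Your identification of the ion-specific replacement of $-\Delta$ by $1-\Delta$ (from expanding $e^\phi = 1+\phi+O(\phi^2)$), which removes the $|\xi|^{-2}$ low-frequency singularity of $\hat\phi$ and supplies zero-mode macroscopic dissipation for $a=\rho-1$, is indeed the right structural observation and is what makes the decay rates match the pure-Boltzmann rates rather than the slower electronic-VPB rates. Since the present paper only cites the result, the work you flag as the ``hard part'' --- verifying that $\mathcal{R}(\phi)=e^\phi-1-\phi$ remains genuinely higher order in every estimate while the linear $+\phi$ is exploited for the elliptic gain and the macroscopic dissipation, and ensuring that the low-frequency spectral decomposition respects this structure --- is precisely what your sketch leaves open, and it is carried out in \cite{Zhong2016}, not in the paper under review.
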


\section{Global smooth solution to the compressible ionic Euler-Poisson system \eqref{EPS}}\label{appendixA}
In this appendix, we recall the existence result on the compressible ionic Euler-Poisson system \eqref{EPS} in $\mathbb{R}^3$ obtained by Guo-Pausader \cite{Guo-ion-2011}.

Define two norms for $k\geq 5$ as
\begin{align*}
  \|g(x)\|_Y & :=\||\nabla|^{-1}g\|_{H^{2k+1}}+\|g\|_{W^{k+\frac{12}{5}, \frac{10}{9}}}, \\
  \|g(t,x)\|_X & :=\sup_t \big(\big\||\nabla|^{-1}(I-\Delta)^{k+\frac{1}{2}}g(t)\big\|_{L^2}
  +(1+t)^{\frac{16}{15}}
  \big\|(I-\Delta)^{\frac{k}{2}}g(t)\big\|_{L^{10}}\big).
\end{align*}
Then, we have
\begin{proposition}[\!\cite{Guo-ion-2011}, Theorem 1.1]\label{result-of-guoyan}
There exists an $\e_0>0$ such that any initial perturbation $[\rho_0^{\rm{in}}, u_0^{\rm{in}}]$ satisfying $\nabla \times u_0^{\rm{in}}=0$ and  $\|\rho_0^{\rm{in}}-1\|_Y+\|u_0^{\rm{in}}\|_Y\leq \e_0$ leads to a global solution $[\rho_0,u_0]$ of \eqref{EPS} with 
  $\|\rho_0-1\|_X+\|u_0\|_X\leq 2 \e_0$.
\end{proposition}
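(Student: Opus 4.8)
The plan is to close the full $L^2$--$L^\infty$ bootstrap for the remainder system \eqref{vpbr}, running an open-closed argument on the time interval $[0,\e^{-m}]$ whose bootstrap hypothesis is precisely \eqref{assume}, namely $\sup_{0\le\tau\le T}\e^k\|h(\tau)\|_{W^{1,\infty}}\le\e^{1/2}$. Assuming the three input propositions from the appendices (the global strong solution to \eqref{vpb} from Proposition \ref{result-of-LiYangZhong2016}, so that $\psi_R$ exists and $\sup_t\|\psi_R\|_\infty=o(1)$, and the global Euler--Poisson solution from Proposition \ref{result-of-guoyan} with the decay $\|\rho_0-1\|_{W^{k,\infty}}+\|u_0\|_{W^{k,\infty}}\lesssim(1+t)^{-16/15}$), together with the coefficient bounds $\|U_i\|_s+\|V_i\|_s+\|\phi_i\|_s\le C(1+t)^{i-1}$ of Proposition \ref{propa} and the Taylor-remainder estimates \eqref{H-e}, \eqref{dtH-e}, \eqref{dxH-e} of Proposition \ref{prop2}, the four analytic ingredients assembled in Sections \ref{sec3}--\ref{sec4} are: (i) the $L^2$ differential inequality \eqref{low} coming from testing \eqref{eqn4.1} against $\theta_0 f$ and combining with the potential energy identity \eqref{L2-psiR-estimate}; (ii) the elliptic estimates \eqref{psiR-e}, \eqref{DphiR-Linfty}, \eqref{DDphiR-Linfty} for $\phi_R$, which re-express $\|\phi_R\|_{H^2}$, $\|D_x\phi_R\|_\infty$, $\|D^2_{xx}\phi_R\|_\infty$ in terms of $\e^{1/2}$, $\|f\|_0$, $\|h\|_\infty$, $\|D_xh\|_\infty$; (iii) the $L^\infty$ estimate \eqref{order-3/2} of Lemma \ref{lemma-h-Linfty}; and (iv) the $W^{1,\infty}$ estimate \eqref{order-5} of Lemma \ref{lemma-Dh-Linfty}, the last two relying on the characteristic-flow bounds \eqref{DxX}--\eqref{4-shi} of Lemma \ref{1}.

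The assembly proceeds as in Section \ref{sec5}. First I would substitute the high-norm bound \eqref{high} into the $L^2$ inequality \eqref{low}, using \eqref{psiR-e} to control $\|\phi_R\|_0$ by $\e^{1/2}+\|f\|_0$; the resulting inequality has the schematic form
\begin{align*}
\frac{\mathrm{d}}{\mathrm{d}t}\mathcal{E}(t)\le C\Big(\tfrac{1}{(1+t)^{16/15}}+\e\mathcal{I}_1(t,\e)+\e^{\frac12}+\e^{k-1}\mathcal{I}_2(t,\e)+\e^{\frac12}\big[\e^{\frac32}\|h_0\|_\infty+\textstyle\sup_{0\le s\le T}\|f(s)\|_0+\e^3\big]\Big)\mathcal{E}(t),
\end{align*}
where $\mathcal{E}(t):=\int\e^{-2k}H(\e)(\psi_Re^{\psi_R}-e^{\psi_R}+1)\,\mathrm dx+\|\nabla_x\phi_R\|_0^2+\|\sqrt{\theta_0}f\|_0^2+1$; here the coercivity of the potential energy term is guaranteed by \eqref{C1C2} (using $\|\psi_R\|_\infty\ll1$) and by $\langle Lf,f\rangle\ge\delta_0\|(\mathbf I-\mathbf P)f\|_\nu^2$ after absorbing the $\kappa$-terms. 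Next I would apply Gronwall's inequality over $[0,t]$ with $t\le\e^{-m}$: the time-integrated factor is finite uniformly in $\e$ because $\int_0^\infty(1+s)^{-16/15}\,\mathrm ds<\infty$ and, by the bookkeeping identity \eqref{time}, $\e\,t\,\mathcal{I}_1(t,\e)+\e^{k-1}t\,\mathcal{I}_2(t,\e)\le C\e^{1/2-m}\le1$ for $0<m\le\tfrac12\tfrac{2k-3}{2k-2}$. This yields, for $\e$ small, the closed $L^2$ bound \eqref{f-DphiR-L2}, i.e.
\begin{align*}
\sup_{0\le t\le\e^{-m}}\big\{\|\phi_R(t)\|_0+\|\nabla\phi_R(t)\|_0+\|f(t)\|_0\big\}\le C\big(1+\|\phi_R(0)\|_0+\|\nabla\phi_R(0)\|_0+\|f(0)\|_0+\e^{3/2}\|h_0\|_\infty\big).
\end{align*}
Feeding this back into \eqref{nabla-high} controls $\e^{3/2}\|(1+|v|)h\|_\infty+\e^5\|D_{x,v}h\|_\infty$ by the same right-hand side; and then \eqref{DphiR-Linfty}, \eqref{DDphiR-Linfty} upgrade to the $\|D_x\phi_R\|_\infty$, $\|D^2_{xx}\phi_R\|_\infty$ terms, while the $L^2$ elliptic estimate gives $\|D^2_{xx}\phi_R\|_0\le C(\e^{1/2}+\|f\|_0)$. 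Collecting all pieces and weighting them by the powers of $\e$ dictated by \eqref{order-3/2} and \eqref{order-5} produces exactly \eqref{ineq-in-thm1.2}.

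It remains to verify that the bootstrap hypothesis \eqref{assume} actually self-improves, so that the local-in-time estimates extend to all of $[0,\e^{-m}]$. From \eqref{high} and \eqref{nabla-high} together with \eqref{f-DphiR-L2} and the initial-data assumption \eqref{IC-assumption}, one gets $\e^k\|h(t)\|_{W^{1,\infty}}\le\e^{k-5}\big(C\e^{3/2}\|h_0\|_\infty+C\e^5\|D_{x,v}h_0\|_\infty+C\sup_{s\le T}\|f(s)\|_0+C\e^3\big)$, which is $O(\e^{k-5})\cdot O(1)$; choosing $k\ge6$ (as the remark following Theorem \ref{thm1} requires) makes this $\le C\e^{k-5}\le\tfrac12\e^{1/2}$ for $\e$ small, strictly better than the assumed $\e^{1/2}$. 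Hence the set of times on which \eqref{assume} holds is open, closed, and nonempty, so it is all of $[0,\e^{-m}]$, and the estimates are unconditional. \textbf{The main obstacle} I anticipate is the coupling between the $L^2$ estimate and the $W^{1,\infty}$ estimate through the potential: the term $\iint v\sqrt\mu f\cdot\nabla_x\phi_R$ on the left of \eqref{L2} is \emph{not} obviously of a good sign, and it is only the time-differentiated nonlinear Poisson identity \eqref{L2-psiR-estimate}--\eqref{L2-psi_R}, which manufactures the nonnegative energy density $(\psi_Re^{\psi_R}-e^{\psi_R}+1)H(\e)\sim\psi_R^2$, that lets this term cancel; keeping careful track of the $\e$-powers and temporal growth in every source — in particular that the Taylor-remainder source $H(\e)-T_{2k-1}(H(\e))$ contributes only $O(\e^{k+1/2})$ thanks to \eqref{g-ell-L2}--\eqref{product-estimate}, and that the coefficient source $A$ in \eqref{A} contributes only $\e^{k-1}\mathcal I_2(t,\e)$ which is integrable against the dynamics by \eqref{time} — is where the bulk of the work lies, and is also where the restriction $m<\tfrac12\tfrac{2k-3}{2k-2}$ and the choice $k\ge6$ are forced.
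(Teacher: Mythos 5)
You have not addressed the statement at all. Proposition \ref{result-of-guoyan} is the Guo--Pausader global existence theorem for the compressible ionic Euler--Poisson \emph{fluid} system \eqref{EPS}; in this paper it is a black box imported from \cite{Guo-ion-2011} (the ``proof'' in the text is precisely the citation ``\cite{Guo-ion-2011}, Theorem 1.1''), and Appendix B merely records the statement and, in Remark \ref{phi_0}, extracts the consequences for $\phi_0$ that the rest of the paper needs. What you have written is instead an outline of the proof of Theorem \ref{thm1} --- the $L^2$--$L^\infty$ bootstrap for the kinetic remainder $R$ in \eqref{vpbr} --- which is a different statement about a different system.

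Worse, your argument is circular with respect to the statement under review: you explicitly list as an input ``the global Euler--Poisson solution from Proposition \ref{result-of-guoyan} with the decay $\|\rho_0-1\|_{W^{k,\infty}}+\|u_0\|_{W^{k,\infty}}\lesssim(1+t)^{-16/15}$.'' That decay \emph{is} the conclusion of Proposition \ref{result-of-guoyan} (together with Remark \ref{phi_0}); you cannot use it to derive it. A genuine proof would have nothing to do with the Boltzmann remainder machinery of Sections \ref{sec3}--\ref{sec5}. It would require the Guo--Pausader analysis of the quasilinear fluid system itself: linearizing \eqref{EPS} around $[\rho_0,u_0]=[1,0]$, observing that the Poisson coupling $\Delta\phi_0=e^{\phi_0}-\rho_0$ produces a Klein--Gordon-type dispersion relation with enhanced high-frequency decay, setting up the irrotationality reduction to a scalar profile, and then running a space-time resonance / normal-form argument in the weighted $X$- and $Y$-norms defined in Appendix B to close a global bootstrap for the fluid unknowns. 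None of that appears in your proposal, so there is no overlap to compare against the paper's treatment of this proposition.
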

\begin{rem}\label{phi_0}
  In particular, both
$\rho_0-1=:\overline{\rho}$ and  $u_0$ are in $H^{2k-1}$, and decay in $W^{k, 10}$ as $t^{-\frac{16}{15}}$ for $k\geq 5$. As for $\phi_0$, note that
$$\overline{\rho}=(I-\Delta) \phi_0+\frac{\phi_0^2}{2}
+\Big[e^{\phi_0}-1-\phi_0-\frac{\phi_0^2}{2}\Big],$$
which defines an inverse operator $\overline{\rho}\mapsto \phi_0(\overline{\rho})$. Expanding it up to third order yields
$$\phi_0(\overline{\rho})=(I-\Delta)^{-1}\overline{\rho}
-\frac{1}{2}(I-\Delta)^{-1}[(I-\Delta)^{-1}\overline{\rho}]^2
+\mathcal{R}(\overline{\rho}),$$
where $\mathcal{R}$ satisfies good properties. Combining $\|\overline{\rho}\|_X\ll 1$ with $(I-\Delta)^{-1}\in \Psi^{-2}$,
we deduce that $\phi_0\in L^\infty(\mathbb{R}^1_+;H^{2k+1})$ and
\begin{align*}
\|\phi_0\|_{W^{k+2,10}}\leq C \|\bar{\rho}\|_{W^{k,10}}\leq C\e_0 (1+t)^{-\frac{16}{15}}, 
\end{align*}
which implies
that $\|\phi_0\|_{C^{k+1,\alpha}}\leq (1+t)^{-\frac{16}{15}}$ for $k\geq 5$.
\end{rem}

 \medskip
{\bf Acknowledgements}:
Li is  supported in part by  NSFC (Grant Nos. 12331007, 12071212)  and   the ``333 Project" of Jiangsu Province.
Wang is supported in part by NSFC (Grant No. 12401288)
and the Natural Science Foundation of Jiangsu Province (Grant No. BK20241259).

\end{document}